\theoremstyle{plain}
\newtheorem{theorem}{Theorem}
\newtheorem*{theorem*}{Theorem}
\newtheorem*{MultLemma}{Multiplicativity Lemma}
\newtheorem*{maintheorem*}{Main Theorem}
\newtheorem{proposition}[theorem]{Proposition}%[chapter]
\newtheorem{corollary}[theorem]{Corollary}%[chapter]
\newtheorem{lemma}[theorem]{Lemma}%[chapter]
\newtheorem{claim}[theorem]{Claim}
\newtheorem*{conjecture*}{Conjecture}%[chapter]
\theoremstyle{definition}
\newtheorem{definition}{Definition}
\newtheorem*{definition*}{Definition}%[chapter]
\newtheorem{example}{Example}%[chapter]
\newtheorem*{example*}{Example}
\newtheorem*{notation*}{Notation}
\newtheorem*{notation-conv*}{Notation and convention}
\newtheorem*{convention*}{Convention}
\newtheorem*{hypothesis*}{Hypothesis}
\theoremstyle{remark}
\newtheorem{remark}{Remark}%[chapter]
\newcommand{\ie}{\emph{i.e.}}
\newcommand{\cf}{\emph{cf.}\:}
\newcommand{\mf}[1]{\mathfrak{#1}}
\newcommand{\myleq}{\leqslant}
\newcommand{\mygeq}{\geqslant}
\newcommand{\ZZ}{{\mathbb Z}}
\newcommand{\CC}{{\mathbb C}}
\newcommand{\Q}{{\mathbb Q}}
\newcommand{\IR}{{\mathbb R}}
\newcommand{\NN}{{\mathbb N}}
\newcommand{\FF}{{\mathbb F}}
\newcommand{\SU}{{\mathrm{SU}(2)}}
\newcommand{\SL}{{\mathrm{SL}_2(\CC)}}
\newcommand{\PSL}{{\mathrm{PSL}_2(\CC)}}
\newcommand{\trace}{{\rm tr}\,}
\newcommand{\rk}{\mathrm{rk}\,}
\newcommand{\I}{\mathbf{1}}
\newcommand{\bm}[1]{\mbox{\boldmath $#1$}} 
\newcommand{\smallbm}[1]{\mbox{\small \boldmath $#1$}} 
\newcommand{\im}{\mathop{\mathrm{im}}\nolimits}
\newcommand{\sll}{\mathfrak{sl}_2(\CC)}
\newcommand{\sllrho}{\mathfrak{sl}_2(\CC)_\rho}
\newcommand{\sllt}{\mathfrak{sl}_2(\CC(t))}
\newcommand{\sllrhot}{\mathfrak{sl}_2(\CC(t))_\rho}
\newcommand{\sllmt}{\mathfrak{sl}_2(\bm{t})}
\newcommand{\msll}[1]{\mathfrak{sl}_2(#1)}
\newcommand{\msllrho}[1]{\mathfrak{sl}_2(#1)_\rho}
\newcommand{\PolyTors}[2]{\Delta_{#1}^{#2}}
\newcommand{\RTors}[2]{\mathbb{T}^{#1}_{#2}}
\newcommand{\Tor}[3]{\mathrm{Tor} ({#1}, {#2}, {#3})}
\newcommand{\otAd}[2]{#1 \otimes Ad \circ #2}
\newcommand{\bbrack}[1]{\lbrack \! \lbrack #1 \rbrack \! \rbrack}
\newcommand{\cbasis}[2][c]{\mathbf{#1}^{#2}}
\newcommand{\bbasis}[2][b]{\mathbf{#1}^{#2}}
\newcommand{\hbasis}[2][h]{\mathbf{#1}^{#2}}
\newcommand{\hbasisrel}[1]{%
\hbasis{*}_{#1}, \hbasis{*}_{\bord #1}, \hbasis{*}_{(#1, \bord #1)}%
}
\newcommand{\lk}{\ell\mathit{k}}
\newcommand{\bord}{\partial}
\newcommand{\infcover}[1]{\overline{#1}}
\newcommand{\ucover}[1]{\widetilde{#1}}
\begin{document}

%%%%%%%%%%%%%%%%%%%%%%%%%%%%%%%%%%%%%%%%%%%%%%%%%%%%%%%%%%%%%%%%%%%%
% Subject classification 
%%%%%%%%%%%%%%%%%%%%%%%%%%%%%%%%%%%%%%%%%%%%%%%%%%%%%%%%%%%%%%%%%%%%
%
%

%%%%%%%%%%%%%%%%%%%%%%%%%%%%%%%%%%%%%%%%%%%%%%%%%%%%%%%%%%%%%%%%%%%%
%  title 
%%%%%%%%%%%%%%%%%%%%%%%%%%%%%%%%%%%%%%%%%%%%%%%%%%%%%%%%%%%%%%%%%%%%

\title{
Twisted Alexander invariant and non--abelian Reidemeister torsion for hyperbolic three--dimensional manifolds with cusps
}

%%%%%%%%%%%%%%%%%%%%%%%%%%%%%%%%%%%%%%%%%%%%%%%%%%%%%%%%%%%%%%%%%%%%
%  author names and addresses
%%%%%%%%%%%%%%%%%%%%%%%%%%%%%%%%%%%%%%%%%%%%%%%%%%%%%%%%%%%%%%%%%%%%

\author{J\'er\^ome Dubois \and Yoshikazu Yamaguchi}

\address{Institut de Math\'ematiques de Jussieu, 
Universit\'e Paris Diderot--Paris 7, 
UFR de Math\'ematiques, 
Case 7012, B\^atiment Chevaleret,
75205 Paris Cedex 13
France}
\email{dubois@math.jussieu.fr}

\address{Department of Mathematics, 
Tokyo Institute of Technology, 
2-12-1 Ookayama Meguro-ku, Tokyo 152-8551, Japan
}
\email{shouji@math.titech.ac.jp}

\date{\today}

%%%%%%%%%%%%%%%%%%%%%%%%%%%%%%%%%%%%%%%%%%%%%%%%%%%%%%%%%%%%%%%%%%%%
%  abstract
%%%%%%%%%%%%%%%%%%%%%%%%%%%%%%%%%%%%%%%%%%%%%%%%%%%%%%%%%%%%%%%%%%%%
\begin{abstract}
  We study a computational method of the hyperbolic Reidemeister torsion (also called in the literature the non--abelian Reidemeister torsion) considered by
  J. Porti for complete hyperbolic three--dimensional manifolds with cusps.  The
  derivative of the twisted Alexander invariant for a hyperbolic knot
  exterior gives the hyperbolic torsion.  We prove such a derivative
  formula of the twisted Alexander invariant for hyperbolic link
  exteriors like the Whitehead link exterior.  We provide the
  framework for the derivative formula to work, which consists of
  assumptions on the topology of the manifold and on the representations involved in the definition of the twisted Alexander invariant, and prove derivative formula in that context.  We also explore the symmetry properties (with sign) of
  the twisted Alexander invariant and prove that it is in fact a polynomial invariant, like the usual Alexander polynomial.
\end{abstract}

\keywords{
  Reidemeister torsion;
  Twisted Alexander invariant;
  Character variety;
  Three--dimensional hyperbolic manifold;
  Homology orientation}
\subjclass[2000]{Primary: 57M25, Secondary: 57M27}
\maketitle

%%%%%%%%%%%%%%%%%%%%%%%%%%%%%%%%%%%%%%%%%%%%%%%%%%%%%%%%%%%%%%%%%%%%%
% end Topmatter
%%%%%%%%%%%%%%%%%%%%%%%%%%%%%%%%%%%%%%%%%%%%%%%%%%%%%%%%%%%%%%%%%%%%%

%%%%%%%%%%%%%%%%%%%%%%%%%%%%%%%%%%%%%%%%%%%%%%%%%%%%%%%%%%%%%%%%%%%%
% labeling rule in the body
%%%%%%%%%%%%%%%%%%%%%%%%%%%%%%%%%%%%%%%%%%%%%%%%%%%%%%%%%%%%%%%%%%%%
% Theorem -> {theorem:***}
% Proposition -> {prop:***}
% Lemma -> {lemma:***}
% Corollary -> {cor:***}
% Claim -> {claim:***}
% Fact -> {fact:***}
% Definition -> {def:***}
% Remark -> {remark:***}
% Hypothesis -> {hypo:***}
% begin{equation} -> {eqn:***}
% begin{figure} -> {fig:***}
%%%%%%%%%%%%%%%%%%%%%%%%%%%%%%%%%%%%%%%%%%%%%%%%%%%%%%%%%%%%%%%%%%%%%
% body of paper
%%%%%%%%%%%%%%%%%%%%%%%%%%%%%%%%%%%%%%%%%%%%%%%%%%%%%%%%%%%%%%%%%%%%%

\section{Introduction}

Hyperbolic Reidemeister torsion, \emph{i.e.} the Reidemeister torsion
twisted by the adjoint representation associated to an irreducible
representation into $\SL$, was first introduced by D. Johnson in
unpublished notes~\cite{Johnson}. Later it was considered by
E. Witten~\cite{Witten:1991} as a symplectic volume form on the moduli
space of $\SU$-flat connections over two--dimensional surfaces and
developed to the case for closed three--manifolds by L.~Jeffrey and
 J.~Weitsman~\cite{JeffreyWeitsman:1993} and
J. Park~\cite{JPark:1997}. Furthermore, J. Porti~\cite{Porti:1997} applied the Reidemeister torsion
twisted by the adjoint representation to the study of three--dimensional hyperbolic manifolds eventually with cusps via $\SL$-character varieties.

Let $M$ be a hyperbolic three--dimensional manifold with
cusps. We consider the character variety $X(M)$ of $M$, which is in a
sense the ``algebraic quotient" of the representation space
$\mathrm{Hom}\big(\pi_1(M), \SL\big)$ under the action by conjugation. This set
has the structure of a complex algebraic affine set as it is proved in~\cite{CS:1983, LubotzkyMagid85}). The
\emph{geometric component} of $X(M)$ is the connected component which
contains the discrete and faithful representation of the complete
hyperbolic structure.

The construction of the hyperbolic torsion uses $\SL$-representations
whose conjugacy classes lie in the geometric component.
In general, it is not easy to compute the hyperbolic torsion directly from the definition because the twisted complex involved for the computation is not acyclic is that case.
But for hyperbolic knot exteriors, we can carry out the computation of the 
hyperbolic torsion by using the derivative of {\it the twisted Alexander invariant}
given by second author's work~\cite{YY:Fourier}. Such formula gives a link between the hyperbolic torsion, which is a ``non--acyclic" Reidemeister torsion, and another one which has the advantage to be computed using an acyclic complex. 
We call this procedure the {\it derivative formula} of the twisted Alexander invariant
and we use the terminology {\it non--abelian Reidemeister torsion} instead of the hyperbolic Reidemeister torsion
following author's previous works~\cite{JDFourier, DuboisVuYam09:RTorsionTwistKnots}.

This paper provides the generalized framework to compute the non--abelian Reidemeister torsion of a hyperbolic three--dimensional manifold with cusps 
by using the derivative of the twisted Alexander invariant with multivariables.
We proceed from the technical conditions required by our framework to the computation procedure of 
the non--abelian Reidemeister torsion.
Our framework requires the following three kinds of assumptions:
\begin{itemize}
\item topological conditions on the hyperbolic three--dimensional manifold $M$ whose boundary consists in the disjoint union of $b$ two--dimensional tori: $$\partial M = \bigcup_{\ell =1}^b T_\ell^2;$$
\item conditions on the surjective homomorphism $$\varphi\colon \pi_1(M) \to \ZZ^n = \langle t_1, \ldots, t_n \; |\; t_it_j = t_jt_i \;(\forall\, i,j)\,\rangle$$ which gives the variables of the twisted Alexander invariant and;
\item conditions on an irreducible $\SL$-character $\rho\colon \pi_1(M) \to \SL$ which lies in the geometric component of the character variety.
\end{itemize}
These assumptions are referred to as the symbols $(A_M)$,
$(A_\varphi)$ and $(A_\rho)$.  Under these assumptions, we show the following four results:
\begin{itemize}
\item the non--abelian Reidemeister torsion $\RTors{M}{\smallbm{\lambda}}(\rho)$ and the twisted Alexander
  invariant $\PolyTors{M}{\otAd{\varphi}{\rho}}(t_1, \ldots, t_n)$ are well--defined;
\item the derivative of the twisted Alexander invariant with
  multivariables gives the non--abelian Reidemeister torsion. More precisely, we have:
  \[
  \lim_{t_1, \ldots, t_n \longrightarrow 1}\; 
    \frac{
      \PolyTors{M}{\otAd{\varphi}{\rho}}(t_1, \ldots, t_n)
    }{
      \prod_{\ell = 1}^b \big( t_1^{a_1^{(\ell)}} \cdots t_n^{a_n^{(\ell)}} - 1 \big )
    } 
    = (-1)^b \cdot \RTors{M}{\smallbm{\lambda}}(\rho).
    \]
    Here $b$ denotes the number of tori components of the boundary $\partial M$ of $M$ and $\varphi\big(\pi_1(T_\ell^2)\big) = \big\langle t_1^{a_1^{(\ell)}} \cdots
  t_n^{a_n^{(\ell)}}\big \rangle$ for some positive integers $a_1^{(\ell)}, \ldots, a_n^{(\ell)}$;
  \item under some technical conditions on the representations $\varphi$ and $\rho$, we prove that the twisted Alexander invariant $\PolyTors{M}{\otAd{\varphi}{\rho}}(t_1, \ldots, t_n)$, which a priori lies in the fraction field $\CC(t_1, \ldots, t_n)$, is in fact  contained in $\CC[t_1^{\pm 1}, \ldots, t_n^{\pm 1}]$ (up to a factor $t_1^{m_1} \cdots t_n^{m_n}$ for some integers $m_1, \ldots, m_n$), and;
\item we give the computation example of the non--abelian Reidemeister
  torsion for the Whitehead link exterior by using this generalized
  formula.
\end{itemize}

The advantage of our generalization of the derivative formula lies in
the fact that we can obtain a sequences of hyperbolic knot exteriors
by Dehn filling a hyperbolic link exterior with some solid tori.
Since twist knots are obtained by Dehn filling the Whitehead link
exterior, the computations result of the non--abelian Reidemeister
torsion for the Whitehead link exterior is useful to observe the
asymptotic behavior of the non--abelian Reidemeister torsions for the
twist knot exteriors, as observed in~\cite{DuboisVuYam09:RTorsionTwistKnots} by
V. Huynh and the authors.

It is expected that the twisted Alexander invariant in our framework
has other properties than the derivative formula.  We also prove a duality formula (with sign) for the twisted Alexander invariant which can be compared with the well--known symmetry property of the usual Alexander polynomial. More precisely, if $M$ is a link exterior $M = E_L = S^3 \setminus N(L)$, then
the polynomial torsion of $E_L$ satisfies the following formula:
\[
    \PolyTors{E_L}{\otAd{\varphi}{\rho}}(t^{-1}) 
      =(-1)^b \PolyTors{E_L}{\otAd{\varphi}{\rho}}(t). 
\]%and the sufficient condition for the twisted Alexander invariant to be
%a Laurent polynomial from the viewpoint of Reidemeister torsion.
%So, we refer the twisted Alexander invariant to a {\it polynomial torsion}.

\section*{Organization}
The outline of the paper is as follows.
Section~\ref{section:preliminaries} deals with some reviews on the
sign--determined Reidemeister torsion for a manifold and on the
multiplicative property of Reidemeister torsions (the Multiplicativity
Lemma) which is the main tool for computing Reidemeister torsions by
using a cut and past argument. In
Section~\ref{section:def_polynomial_torsion}, we set down technical
assumptions used in the whole of this paper, prove that a certain
twisted homology vanishes and then give the definition of the twisted
Alexander invariant for a hyperbolic manifold with boundary.
Section~\ref{section:exemples} presents some examples of computations
of the twisted Alexander invariant for the figure eight knot exterior
and for the Whitehead link exterior.  Section~\ref{section:naturality}
gives a reduction of the number of variables in the twisted Alexander
invariant when we change coefficients in the twisted chain complex in
the definition of the torsion.  Section~\ref{section:derform} is
devoted to the derivative formula which gives a bridge joining the
twisted Alexander invariant and the non--abelian Reidemeister torsion in the adjoint
representation associated with an $\SL$-representation in the geometric
component.  In Section~\ref{section:poly}, we prove that the twisted
Alexander invariant is generically a polynomial by using a cut and
paste argument and the multiplicative property of Reidemeister
torsions (see Theorem~\ref{theorem:polyvstorsion}). The symmetry
properties (with sign) of this polynomial torsion are investigated in
Section~\ref{section:symmetries} (see
Theorem~\ref{theorem:symmetry_torsion}).

\section*{Acknowledgments}

The authors gratefully acknowledge the many helpful ideas and
suggestions of Joan Porti during the preparation of the paper.  They
also wish to express their thanks to C.~Blanchet and V.~Maillot for
several helpful comments.  
Our thanks also go to T.~Morifuji, K.~Murasugi and S.~Friedl
for giving us helpful informations 
and comments regarding this work.
The paper was conceived during J.D. visited
the CRM. He thanks the CRM for hospitality.  The first author (J.D.) is partially supported by the French ANR project ANR-08-JCJC-0114-01.The second author (Y.Y.)
is partially supported by the GCOE program at Graduate School of
Mathematical Sciences, University of Tokyo. Y.Y. visited CRM and IMJ
while writing the paper. He thanks CRM and IMJ for their hospitality.

\section{Preliminaries}
\label{section:preliminaries}
\subsection{The Reidemeister torsion}
We review the basic notions and results about the sign--determined
Reidemeister torsion introduced by V. Turaev which are needed in this
paper. Details can be found in Milnor's survey~\cite{Milnor:1966} and
in Turaev's monograph~\cite{Turaev:2002}.

\subsubsection*{Torsion of a chain complex}
Let 
$C_* = (
  0 \to 
    C_n \xrightarrow{d_n} 
    C_{n-1} \xrightarrow{d_{n-1}} 
    \cdots \xrightarrow{d_1} 
    C_0 \to 
  0 
)$ 
be a chain complex of finite dimensional vector spaces over a field
$\FF$. Choose a basis $\cbasis{(i)}$ of $C_i$ and a basis $\hbasis{i}$
of the $i$-th homology group $H_i(C_*)$. The torsion of $C_*$ with
respect to these choices of bases is defined as follows.

For each $i$, let $\bbasis{i}$ be a set of vectors in $C_{i}$ such that
$d_{i}(\bbasis{i})$ is a basis of 
$B_{i-1}= \im(d_{i} \colon C_{i} \to C_{i-1})$ and 
let $\hbasis[\tilde{h}]{i}$ denote a lift of
$\hbasis{i}$ in $Z_i = \ker(d_{i} \colon C_i \to C_{i-1})$. The set of
vectors $d_{i+1}(\bbasis{i+1})\hbasis[\tilde{h}]{i}\bbasis{i}$ is a
basis of $C_i$. Let
$[d_{i+1}(\bbasis{i+1})\hbasis[\tilde{h}]{i}\bbasis{i}/\cbasis{i}] \in \FF^*$ 
denote the determinant of the transition matrix between those
bases (the entries of this matrix are coordinates of vectors in
$d_{i+1}(\bbasis{i+1})\hbasis[\tilde{h}]{i}\bbasis{i}$ with respect to
$\cbasis{i}$). The \emph{sign-determined Reidemeister torsion} of
$C_*$ (with respect to the bases $\cbasis{*}$ and $\hbasis{*}$) is the
following alternating product (see~\cite[Definition 3.1]{Turaev:2000}):
\begin{equation}\label{def:RTorsion}
  \Tor{C_*}{\cbasis{*}}{\hbasis{*}} 
  = (-1)^{|C_*|} \cdot  
  \prod_{i=0}^n 
  [d_{i+1}(\bbasis{i+1})\hbasis[\tilde{h}]{i}\bbasis{i}/\cbasis{i}]^{(-1)^{i+1}} 
  \in \FF^*.
\end{equation}
Here $$|C_*| = \sum_{k\mygeq 0} \alpha_k(C_*) \beta_k(C_*),$$ where
$\alpha_k(C_*) = \sum_{i=0}^k \dim C_i$ and 
$\beta_k(C_*) = \sum_{i=0}^k \dim H_i(C_*)$.

The torsion $\Tor{C_*}{\cbasis{*}}{\hbasis{*}}$ does not depend on the
choices of $\bbasis{i}$ nor on the lifts $\hbasis[\tilde{h}]{i}$.  Note that if
$C_*$ is acyclic (\ie~if $H_i = 0$ for all $i$), then $|C_*| = 0$.

\subsubsection*{Torsion of a CW-complex}
Let $W$ be a finite CW-complex and $(V, \rho)$ be a pair of a vector
space with an inner product over $\FF$ and a homomorphism of $\pi_1(W)$
into $Aut(V)$.  The vector space $V$ turns into a right $\ZZ[\pi_1(W)]$-module denoted $V_{\rho}$ by using the right
action of $\pi_1(W)$ on $V$ given by $v \cdot \gamma = \rho(\gamma)^{-1}(v)$, for
$v \in V$ and $\gamma \in \pi_1(W)$.  The complex of the universal cover 
with integer coefficients $C_*(\ucover{W}; \ZZ)$ also inherits a left
$\ZZ[\pi_1(W)]$-module via the action of $\pi_1(W)$ on
$\ucover{W}$ as the covering group.
We define the $V_\rho$-twisted
chain complex of $W$ to be
\[
C_*(W; V_\rho) = V_{\rho} \otimes_{\ZZ[\pi_1(W)]} C_*(\ucover{W}; \ZZ).
\]
The complex $C_*(W; V_\rho)$ computes the {\it $V_\rho$-twisted homology} of $W$
which we denote as $H_*(W;V_\rho)$.

Let $\left\{e^{i}_1, \ldots, e^{i}_{n_i}\right\}$ be the set of
$i$-dimensional cells of $W$. We lift them to the universal cover and
we choose an arbitrary order and an arbitrary orientation for the
cells $\left\{ {\tilde{e}^{i}_1, \ldots, \tilde{e}^{i}_{n_i}}
\right\}$. If we let $\{\bm{v}_1, \ldots, \bm{v}_m\}$ be an
orthonormal basis of $V$, then we consider the corresponding basis
$$
\cbasis{i} = 
  \left\{ 
    \bm{v}_1 \otimes \tilde{e}^{i}_{1} , 
    \ldots, 
    \bm{v}_m \otimes \tilde{e}^{i}_{1}, 
    \cdots, 
    \bm{v}_1 \otimes \tilde{e}^{i}_{n_i}, 
    \ldots, 
    \bm{v}_m \otimes \tilde{e}^{i}_{n_i}
  \right\}
$$ 
of 
$C_i(W; V_\rho) = V_\rho \otimes_{\ZZ[\pi_1(W)]} C_*(\ucover{W};\ZZ)$. 
We call the basis $\cbasis{*} = \oplus_{i} \cbasis{i}$ 
{\it a geometric basis} of $C_*(W;V_\rho)$.
Now choosing for each $i$ a basis $\hbasis{i}$ of the
$V_\rho$-twisted homology $H_i(W; V_\rho)$, we can compute the torsion
$$\Tor{C_*(W; V_\rho)}{\cbasis{*}}{\hbasis{*}} \in \FF^*.$$

The cells 
$\left\{ \left. 
  \tilde{e}^{i}_j \, 
         \right|\, 
  0 \myleq i \myleq \dim W, 
  1 \myleq j \myleq n_i
\right\}$ are in one--to--one correspondence with the cells of $W$, 
their order and orientation induce an order and 
an orientation for the cells 
$\left\{ \left. 
  \tilde{e}^{i}_j \,
         \right|\, 
  0 \myleq i \myleq \dim W, 
  1 \myleq j \myleq n_i
\right\}$. 
Again, corresponding to these
choices, we get a basis $\cbasis{i}_\IR$ over $\IR$ of $C_i(W; \IR)$.

Choose an \emph{homology orientation} of $W$, which is an orientation
of the real vector space $H_*(W; \IR) = \bigoplus_{i\mygeq 0}
H_i(W; \IR)$. Let $\mf{o}$ denote this chosen
orientation. Provide each vector space $H_i(W; \IR)$ with a reference
basis $\hbasis{i}_\IR$ such that the basis 
$\left\{ {\hbasis{0}_\IR, \ldots, \hbasis{\dim W}_\IR} \right\}$ of 
$H_*(W; \IR)$ is {positively oriented} with respect to $\mf{o}$. 
Compute the sign--determined Reidemeister torsion 
$\Tor{C_*(W; \IR)}{\cbasis{*}_\IR}{\hbasis{*}_\IR} \in \IR^*$ of 
the resulting based and homology based chain complex and consider its sign 
$$\tau_0 =
  \mathrm{sgn}\left(
    \Tor{C_*(W; \IR)}{\cbasis{*}_\IR}{\hbasis{*}_\IR}
   \right)
\in \{\pm 1\}.$$

We define the \emph{sign--refined twisted Reidemeister torsion} of $W$
{(with respect to $\hbasis{*}$ and $\mf{o}$)} to be
\begin{equation}\label{eqn:TorsionRaff}
  \tau_0 \cdot 
  \Tor{C_*(W; V_\rho)}{\cbasis{*}}{\hbasis{*}} \in \FF^*.
\end{equation}
This definition only depends on the combinatorial class of $W$, the
conjugacy class of $\rho$, the choice of $\hbasis{*}$ and the
{homology} orientation $\mf{o}$. It is independent of the
orthonormal basis of $V$, of the choice of the lifts
$\tilde{e}^{i}_j$, and of the choice of the positively oriented basis
of $H_*(W; \IR)$. Moreover, it is independent of the order and
orientation of the cells (because they appear twice).

\begin{remark}
  In particular, if the Euler characteristic $\chi(W)$ is zero, then
  we can use any basis of $V$. If we change the basis of $V$ by another one, then the torsion is multiplicated 
by  the determinant of the bases change matrix to the power $\chi(W)$.
\end{remark}

One can prove that the sign--refined Reidemeister torsion is invariant 
under cellular subdivision, 
homeomorphisms and simple homotopy equivalences. In fact,
it is precisely the sign $(-1)^{|C_*|}$ in Equation~(\ref{def:RTorsion})
which ensures all these important invariance properties to hold (see~\cite{Turaev:2002}).

\subsection{The Multiplicativity Lemma for torsions}
\label{subsection:multiplicativitylemma}

In this section, we briefly review the Multiplicativity Lemma for
Reidemeister torsions (with sign). 

First, we review the notion of \emph{compatible bases}. Let $0 \to E'
\xrightarrow{i} E \xrightarrow{j} E'' \to 0$ be a short exact sequence
of finite dimensional vector spaces and let $s$ denotes a section of
$j$. Thus, $i \oplus s : E' \oplus E'' \to E$ is an isomorphism. We
equip the three vector spaces $E'$, $E$ and $E''$ respectively with
the following three bases : $\mathbf{b}' = \left( b'_1, \ldots,
  b'_p\right)$, $\mathbf{b} = \left(b_1, \ldots, b_n\right)$, and
$\mathbf{b}'' = \left(b''_1, \ldots, b''_q\right)$. With such
notation, one has $n = p+q$, and we say that the bases $\mathbf{b}'$,
$\mathbf{b}$ and $\mathbf{b}''$ are compatible if the isomorphism $i
\oplus s : E' \oplus E'' \to E$ has determinant $1$ in the bases
$\mathbf{b}' \cup \mathbf{b}'' = \left( b'_1, \ldots, b'_p, b''_1,
  \ldots, b''_q\right)$ of $E' \oplus E''$ and $\mathbf{b}$ of $E$. If
it is the case, we write $\mathbf{b} \sim \mathbf{b}' \cup
\mathbf{b}''$.

Let us now review the multiplicativity property of the Reidemeister torsion (with sign).

\begin{MultLemma}[Lemma 3.4.2 in~\cite{Turaev86:torsion_in_knot_theory}]
  \label{lemma:Multiplicativite}
  Let
  \begin{equation}\label{eqn:short_exact_seq_complexes}
      0 \to C'_* \to C_* \to C_*'' \to 0
  \end{equation}
  be an exact sequence of chain complexes. Assume that $C'_*$, $C_*$
  and $C''_*$ are based and homology based. For all $i$, let
  $\cbasis[c']{i}$, $\cbasis[c]{i}$ and $\cbasis[c'']{i}$ denote the
  reference bases of $C_i'$, $C_i$ and $C_i''$ respectively.
  Associated to~(\ref{eqn:short_exact_seq_complexes}) is the long
  sequence in homology
  \[
    \cdots \to 
    H_i(C'_*) \to 
    H_i(C_*)  \to 
    H_i(C''_*) \to 
    H_{i-1}(C'_*) \to 
    \cdots 
  \]
  Let $\mathcal{H}_*$ denote this acyclic chain complex and base
  $\mathcal{H}_{3i+2} = H_i(C'_*)$, $\mathcal{H}_{3i+1} = H_i(C_*)$
  and $\mathcal{H}_{3i} = H_i(C''_*)$ with the reference bases of
  $H_i(C'_*)$, $H_i(C_*)$ and $H_i(C''_*)$ respectively.  If for all
  $i$, the bases $\cbasis[c']{i}$, $\cbasis{i}$ and $\cbasis[c'']{i}$
  are compatible, 
  \ie~$\cbasis{i} \sim \cbasis[c']{i} \cup \cbasis[c'']{i}$, then
  \begin{align*}
    \lefteqn{\Tor{C_*}{\cbasis{*}}{\hbasis{*}}} &\\
    &= (-1)^{\alpha(C'_*, C''_*) + \varepsilon(C'_*, C_*, C''_*)} \;
    \Tor{C'_*}{\cbasis[c']{*}}{\hbasis[h']{*}}
    \Tor{C''_*}{\cbasis[c'']{*}}{\hbasis[h'']{*}}
    \Tor{\mathcal{H}_*}{\{\hbasis[h']{*}, \hbasis{*},\hbasis[h'']{*}\}}{\emptyset}
  \end{align*}
  where 
  \begin{align*}
    \alpha(C'_*, C''_*) &= 
    \sum_{i \mygeq 0} \alpha_{i-1}(C'_*) \alpha_i(C''_*) \in \ZZ/2\ZZ 
  \quad \hbox{and}\\
    \varepsilon(C'_*, C_*, C''_*) &= 
    \sum_{i \mygeq 0}
    \{(\beta_i(C_*)+1)(\beta_i(C_*') + \beta_i(C_*'')) + 
      \beta_{i-1}(C_*')\beta_i(C_*'')\}
    \in \ZZ/2\ZZ.
  \end{align*}
\end{MultLemma}
The proof is a careful computation based on linear algebra,
see~\cite[Lemma 3.4.2]{Turaev86:torsion_in_knot_theory}
and~\cite[Theorem 3.2]{Milnor:1966}.  This lemma appears to be a very
powerful tool for computing Reidemeister torsions. It will be used all
over this paper.

\section{Definition of the polynomial torsion}
\label{section:def_polynomial_torsion}

In this section, we define the twisted Alexander invariant and called it for short the polynomial torsion. This invariant is the twisted Alexander invariant with coefficients in the adjoint representation associated to a character which lies in the geometric component of the character variety of the three--manifold. We define it following the presentation given by Friedl and Vidussi in their survey~\cite{FriedlVidussi:survey} using Reidemeister torsions theory. 

Hereafter $M$ denotes a compact and connected hyperbolic three--dimensional
manifold such that its boundary $\partial M$ consists in a disjoint
union of $b$ two--dimensional tori: $$\partial M = T^2_1 \cup \ldots \cup T^2_b.$$

In the sequel,  $\rho$ denotes a representation of $\pi_1(M)$ into $\SL$.
The composition of $\rho$ with the adjoint action $Ad$ of $\SL$ on $\sll$ 
gives us the following representation:
\begin{align*}
  Ad \circ \rho \colon \pi_1(M) &\to Aut(\sll) \\
 \gamma &\mapsto (v \mapsto \rho(\gamma) v \rho(\gamma)^{-1})
\end{align*}
We let $\sllrho$ denote the right $\ZZ[\pi_1(M)]$--module $\sll$ 
via the action $Ad \circ \rho^{-1}$.

Now we introduce the two different twisted chain complexes which will be considered throughout this paper. 

 The first twisted complex under consideration is the complex $C_*(M; \sllrho)$ defined by:
 \begin{equation}
  C_*(M; \sllrho) = \sllrho \otimes_{\ZZ[\pi_1(M)]} C_*(\ucover{M}; \ZZ).
\end{equation}
This chain complex is called {\it the $\sllrho$-twisted
  chain complex of $M$}. The twisted chain complex $C_*(M; \sllrho)$ computes the so--called {\it 
  $\sllrho$-twisted homology} 
 denoted by 
$H_*^\rho(M) = H_*(M; \sll_\rho)$.  It is well--known that for a three--manifold $M$ with non--empty boundary one has $\dim H_1^\rho(M) \geqslant b$. Thus $C_*(M; \sllrho)$ is never acyclic for three--manifolds with non--empty boundary. We will use the symbol $\RTors{M}{}$
to denote the sign--refined Reidemeister torsion of $C_*(M;\sllrho)$.

Next we introduce a twisted chain complex with some variables.
It will be done by using a $\ZZ[\pi_1(M)]$--module with variables 
to define a new twisted chain complex.
We regard $\ZZ^n$ as the multiplicative group generated by $n$ variables
$t_1, \ldots, t_n$, \ie, 
$$\ZZ^n = \left\langle t_1, \ldots, t_n \,|\, t_it_j = t_jt_i \, (\forall i, j)\right\rangle$$
and 
consider  a surjective homomorphism $\varphi\colon \pi_1(W)\to\ZZ^n$.
We often abbreviate the $n$ variables $(t_1, \ldots, t_n)$ to $\bm{t}$ and 
the rational functions $\CC(t_1, \ldots, t_n)$ to $\CC(\bm{t})$.
Moreover, we write 
$\msll{t_1, \ldots, t_n} = \sllmt$ for $\CC(\bm{t}) \otimes_{\CC} \sll$ 
for brevity.  
Note that $\sllmt$ is naturally identified with $\msll{\CC(\bm{t})}$ 
which is the vector space of trace free matrices whose components are rational functions 
in $\CC(\bm{t}) = \CC(t_1, \ldots, t_n)$.
The group $\pi_1(M)$ acts on $\sllmt$ via the following action: 
$$
\otAd{\varphi}{\rho} : 
  \pi_1(M) \to Aut(\CC(\bm{t}) \otimes \sll) = Aut(\sllmt).
$$ 
Thus, $\sllmt$ inherits the structure of a right $\ZZ[\pi_1(M)]$-module, $\sllmt_\rho$, and we consider the associated twisted chain $C_*(M; \sllmt_\rho)$ given by:
$$
C_*(M; \sllmt_\rho) = \sllmt \otimes_{\ZZ[\pi_1(M)]} C_*(\ucover{M}; \ZZ)
$$
where $ f \otimes v \otimes \gamma \cdot \sigma$ is identified with 
$f \varphi(\gamma) \otimes Ad_{\rho(\gamma)^{-1}}(v) \otimes \sigma$ for
any 
$\gamma \in \pi_1(M)$, 
$\sigma \in C_*(\ucover{M};\ZZ)$, 
$v \in \sll$ and 
$f \in \CC(\bm{t})$.
We call this complex {\it the $\sllmt_\rho$-twisted chain complex} of $M$, and its homology is denoted $H_*(M; \sllmt_\rho)$.

Now we define geometric bases. We choose a basis of $\sll$, for example, 
\begin{equation}\label{eqn:basis_sll}
\{E, H, F\} 
=
\left\{ 
\left(
  \begin{array}{cc}
    0&1\\
    0&0
  \end{array}
\right),
\left(
  \begin{array}{cc}
    1&0\\
    0&-1
  \end{array}
\right), 
\left(
  \begin{array}{cc}
    0&0\\
    1&0
  \end{array}
\right)
\right\},
\end{equation}
a geometric basis $\cbasis{*}$ of $C_*(M; \sllrho)$ is obtained from 
the CW--structure of $M$. The geometric basis $\cbasis{*}$ automatically gives
us the geometric basis $1 \otimes \cbasis{*}$ of $C_*(M; \sllmt_\rho)$. In
all this paper these two bases will be abusively denoted with the same
notation.

\begin{definition}\label{def:AlexTorsion}
Fix a homology orientation on $M$. 
  If $C_*(M;\sllmt_\rho)$ is acyclic, then the sign--refined Reidemeister torsion
  of $C_*(M;\sllmt_\rho)$:
  $$
  \PolyTors{M}{\otAd{\varphi}{\rho}}(t_1, \ldots, t_n) = 
    \tau_0 \cdot
    \Tor{C_*(M;\sllmt_\rho)}{\cbasis{*}}{\emptyset} 
    \in \CC(t_1, \ldots, t_n) \setminus \{0\}.
  $$
  is called the \emph{twisted Alexander invariant} (or the \emph{polynomial torsion} for short) of $M$.
\end{definition}

Note that the sign--refined Reidemeister torsion
$\PolyTors{M}{\otAd{\varphi}{\rho}}$ is determined up to a factor
$t_1^{m_1} \cdots t_n^{m_n}$ such as the classical Alexander polynomial.

\begin{example}
  Suppose that $M$ is the knot exterior $E_K = S^3 \setminus N(K)$ of
  a knot $K$ in $S^3$ where $N(K)$ is an open tubular neighbourhood of
  $K$. If the representation $\rho \in \mathrm{Hom}(\pi_1(E_K); \Q)$
  is the trivial homomorphism and $\varphi$ is the abelianization of
  $\pi_1(E_K)$, \ie, $\varphi:\pi_1(E_K) \to H_1(E_K;\ZZ) \simeq
  \langle t\rangle$, then the twisted chain complex $C_*(E_K;
  \Q(t)_\rho)$ is acyclic and the torsion $\Delta_{E_K}^{\varphi
    \otimes Ad \circ \rho}(t)$ is the Alexander polynomial divided by
  $(t-1)$ (see~\cite{Milnor:1962} and~\cite{Turaev:2002}).
\end{example}

\subsection{Technical assumptions}
\label{section:hyp}

In this subsection, we give some sufficient conditions on the compact
hyperbolic three--manifold $M$ whose boundary consists of
a disjoint union of tori and on the representations $\varphi$ and
$\rho$ which assure the acyclicity of the twisted chain complex
$C_*(M;\sllmt_\rho)$. 

We require assumptions on the topology of $M$,
on the surjective homomorphism $\varphi$ of $\pi_1(M)$ onto $\ZZ^n$ and
on the $\SL$-representation $\rho$. The assumption on $\varphi$ is also
related to the topology of $M$, we will show this in the second half
of this subsection.

We also prove that if $M$ is a hyperbolic knot exterior,
$\varphi\colon \pi_1(M) \to \ZZ$ the abelianization and $\rho\colon
\pi_1(M) \to \SL$ the holonomy (\ie~the discrete and faithful
representation), then all our assumptions are satisfied.

\subsubsection{Topological assumption for $M$}

Let $b$ be the number of components of $\bord
M$.  We let $T^2_\ell$ denotes the $\ell$--component of $\bord M$.  The usual inclusion $i:\bord M \to M$
  induces an homomorphism $i_* \colon H_1(\bord M;\ZZ) \to
  H_1(M;\ZZ)$. First we assume
the following condition on the homology group $H_1(M;\ZZ)$:
\begin{itemize}
\item[$(A_M)$] the homomorphism  $i_* \colon H_1(\bord M;\ZZ) \to
  H_1(M;\ZZ)$ is onto and its restriction  $(i|_{T^2_\ell})_*$ to the $\ell$-th component of $\partial M$ 
  has rank one for all $\ell$.
\end{itemize}
Thus we can choose two closed loops $\mu_\ell$ and $\lambda_\ell$ on $T^2_\ell$ 
such that the homology classes $\bbrack{\mu_\ell}$ and $\bbrack{\lambda_\ell}$ form a basis of 
$H_1(T^2_\ell;\ZZ)$ and the image $i_*(\bbrack{\mu_\ell})$ generates the subgroup $\im (i|_{T^2_\ell})_*$
and $\bbrack{\lambda_\ell}$ generates the kernel of $(i|_{T^2_\ell})_*$. We call $\mu_\ell$ a meridian and $\lambda_\ell$ a longitude.

We let $\bm{\lambda}$ denote the set 
$(\lambda_1, \ldots, \lambda_b) \subset \partial M$ of 
such generators of $\ker \varphi_{|\pi_1(T^2_\ell)}$
and call $\bm{\lambda}$ the multi--longitude curve.

\begin{remark}
  From the homology long exact sequence of $(M, \bord M)$, it follows
  that $b_1(M) = b$ and $H_1(M;\ZZ)$ has no--torsion.
\end{remark}

\begin{example}[Knot exteriors]\label{ex:knot}
Suppose that $M$ is the exterior $E_K$ of a
hyperbolic knot $K$ in $S^3$. Here $E_K = S^3 \setminus N(K)$ where $N(K)$ is an
open tubular neighbourhood of $K$.  Then $M = E_K$ satisfies the
condition~$(A_M)$. This is due to the existence of a Seifert
surface of the knot.
\end{example}

\begin{remark}[Link exteriors]

 Let $L$ be a hyperbolic link such that each
components of the link bounds a Seifert surface missing the other
components.  This condition for a link $L = K_1 \cup \ldots \cup K_b$
is equivalent to that the linking numbers $\lk(K_i, K_j)$ are zero for
all $i, j$. It is the reason that we can obtain the required Seifert
surface by first choosing an arbitrary Seifert surface for $K_{i_k}$
and then getting rid of the intersections by adding tubes as in
Fig~\ref{fig:whitehead_link}.  The intersections of such Seifert surfaces
and the boundary $\bord E_L$ form a set of longitudes.

Note that it is not necessarily the case that Seifert surfaces are
disjoint if each component of a link $L = K_1 \cup \ldots \cup K_b$
bounds a Seifert surface missing the other components.  Links whose
components do bound disjoint Seifert surfaces are called 
{\it boundary links}. 
For example, the Whitehead link $L = K_1 \cup K_2$ has the
linking number $\lk(K_1, K_2) = 0$.  Hence there exist two Seifert
surfaces $F_i$ ($i=1, 2$) such that $\bord F_i = K_i$ and $F_i \cap
K_j = \emptyset$\, ($i \not = j$).  But in fact the Whitehead link
does not bound disjoint Seifert surfaces since the Whitehead link is
not a boundary link, for more details see~\cite[Chapter 5 E]{Rolfsen}.

In this paper, three--manifold under considerations has the same properties about the homology group like as 
those of a boundary link exterior.
\end{remark}

\begin{figure}[!h]
  \begin{center}
    \includegraphics[scale=.5]{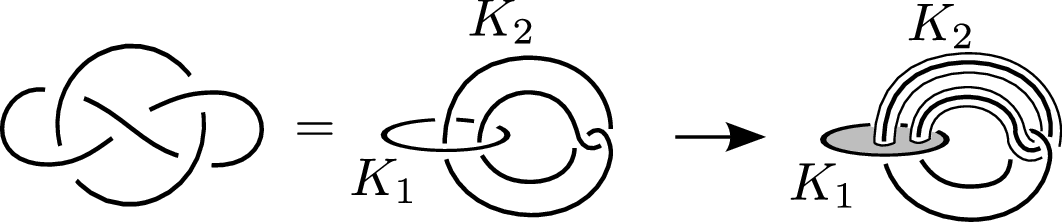}
  \end{center}
  \caption{The Whitehead link}
  \label{fig:whitehead_link}
\end{figure}

\subsubsection{Assumption on $\varphi$}

We suppose that 
$$\varphi \colon \pi_1(M) \to \ZZ^n = 
\left\langle t_1, \ldots, t_n \;|\; t_i t_j = t_j t_i, \; \forall i, j \; \right\rangle$$
 is a
surjective homomorphism which satisfies the following assumption (in
the case of manifolds with boundary)
\begin{itemize}
\item[$(A_\varphi)$] the restriction $\varphi_{|\pi_1(T^2_\ell)}$ of
  $\varphi$ to the $\ell$--th boundary component of $\partial M$ is non--trivial and
  has rank 1, \ie
  $$\varphi(\pi_1(T^2_\ell)) = 
  \langle t_1^{a_1^{(\ell)}} \cdots t_n^{a_n^{(\ell)}} \rangle$$ 
  for some $(a_1^{(\ell)}, \cdots, a_n^{(\ell)}) \in \NN^n\setminus \{0\}$.
\end{itemize} 

\begin{remark}
  Every homomorphism from $\pi_1(M)$ to an abelian group factors
  through the abelianization $H_1(M;\ZZ)$ of $\pi_1(M)$, \ie, the
  following diagram is commutative:
  $$
  \xymatrix@R=20pt{
    \pi_1(M) \ar[rd]_{\varphi} \ar[r] & 
    H_1(M;\ZZ) \ar[d]^{\varphi_h}\\
     & 
    \ZZ^n 
  }
  $$
  When we consider a surjective homomorphism $\varphi$ onto $\ZZ^n$,
  the induced homomorphism $\varphi_h$ is also surjective. Together
  with the condition $(A_M)$, this means that \emph{$n$ must be not greater than  $b_1(M) = b$}.  Moreover we have the null--homologous closed
  curve $\lambda_\ell$ on each $T^2_\ell$, every homomorphism $\varphi$ has
  the multi--longitude consisting of these $\lambda_\ell$.
\end{remark}

\begin{example}[Abelianization representation]\label{ex:ab}
 Suppose that $M$ is the exterior $E_K$ of a knot $K \subset S^3$ or a link exterior $E_L$ of a link $L \subset S^3$ whose linking number between arbitrary two components is zero. The
abelianization $\varphi\colon \pi_1(M) \to H_1(M; \ZZ) \simeq \ZZ^b$ satisfies Assumption $(A_\varphi)$.  From Assumption~$(A_M)$, it follows that $H_1(M;\ZZ)
= \langle \mu_1, \ldots, \mu_b\rangle$.  Assumption ~$(A_\varphi)$
means that each entry of any representative matrix of $\varphi_h$ is
non-zero.
\end{example}

\subsubsection{Assumption on $\SL$-representations}

It is required for an $\SL$-representation $\rho$ of $\pi_1(M)$ to be
a {\it ``generic''} representation, which essentially means that
$\rho$ lies in {\it the geometric components} of {\it the character
  variety}.
  
The character variety of $\pi_1(M)$ is the set of characters of
$\SL$-representations. Here the character of an $\SL$-representation
$\rho$ is a map $\pi_1(M) \to \CC$ given by the assignment $\gamma
\mapsto \trace \rho(\gamma)$ for all $\gamma \in \pi_1(M)$, where $\trace$ denotes the usual trace of square matrices.  This set
has a structure of an affine algebraic variety (refer to
\cite{CS:1983}) denoted by $X(M)$. For a complete hyperbolic manifold,
the character variety $X(M)$ contains the distinguished components
related to the complete hyperbolic structure.  These components are
defined by containing a lift of the holonomy representation $\pi_1(M)
\to \PSL$ (\ie, the discrete and faithful representation)  determined by the complete hyperbolic structure.
We call these components the geometric components.

Here $\bm{\lambda}$ is the multi--longitude corresponding to $\varphi$.
The precise definition is given as follows.

\begin{notation*}
  For each boundary component $T^2_\ell$ of $M$, we fix a generator
  $P_\ell^\rho$ of the homology group $H_0(T^2_\ell; \sllrho)$, \ie~a
  non--trivial vector $P_\ell^\rho \in \sll$ which satisfies
  $Ad_{\rho(g)}(P_\ell^\rho) = P_\ell^\rho$ for all $g \in \pi_1(T^2_\ell)$
  (for more details, see~\cite[Section 3.3.1]{Porti:1997}).
\end{notation*}

\begin{definition}
  For the multi--longitude $\bm{\lambda} = \{\lambda_1, \ldots,
  \lambda_b\}$, an irreducible representation $\rho$ is called
  $\bm{\lambda}$-regular if $\rho$ satisfies the following conditions:
  \begin{enumerate}
  \item for each boundary component $T^2_\ell$ of $\bord M$, the restriction of
    $\rho$ to $\pi_1(T^2_\ell)$ is non--trivial;
  \item the following homomorphism, induced from all inclusions $\lambda_\ell \hookrightarrow M$ $(1 \myleq \ell \myleq b)$,
    $$\mathop{\bigoplus}_{\ell = 1}^b H_1(\lambda_{\ell};\sllrho) \to H_1(M;\sllrho)$$
    is surjective and;
  \item if $\trace \rho(\pi_1(T^2_\ell)) \subset \{\pm 2\}$, 
     then
    $\rho(\lambda_{\ell}) \not = \pm \I$.
  \end{enumerate}
\end{definition}

\begin{remark}
  The chain $P^\rho_\ell \otimes \lambda_{\ell}$ becomes a cycle \ie,
  it defines a homology class in $H_1(\lambda;\sllrho)$.
  By~\cite[Proposition 3.22 and Corollaire 3.21]{Porti:1997}, for a
  $\bm{\lambda}$-regular representation $\rho$, we have the following
  bases of the twisted homology groups:
  \begin{itemize}
  \item the homology group $H_1(M;\sllrho)$ has a basis $\left\{
    \bbrack{P^\rho_1 \otimes \lambda_1}, \ldots, \bbrack{P^\rho_b \otimes \lambda_b} \right\}$,
  \item the homology group $H_2(M;\sllrho)$ has a basis  $\left\{
    \bbrack{P^\rho_1\otimes T^2_1}, \ldots, \bbrack{P^\rho_b \otimes T^2_b} \right\}$.
  \end{itemize}
In \cite[Definition 3.21]{Porti:1997}, irreducibility is not required and the second condition is written by using 
twisted cohomology groups.
Since we consider representations near the holonomy representation of $\pi_1(M)$,
 we focus on $\bm{\lambda}$-regularity of irreducible representations in the present article.
\end{remark}

Here we use the same symbol $\lambda_\ell$ and $T^2_\ell$ for lifts of $\lambda_\ell$ and $T^2_\ell$ to the universal cover.

\begin{remark}
  For generic points on the geometric component of $X(M)$, the
  corresponding $\SL$-representations satisfy
  $\bm{\lambda}$-regularity.
\end{remark}
  
We assume the following assumption for $\SL$-representation of
$\pi_1(M)$
\begin{itemize}
\item[$(A_\rho)$] the representation $\rho\colon \pi_1(M) \to \SL$ is
  $\bm{\lambda}$-regular for the multi--longitude $\bm{\lambda}$ determined by $\varphi \colon \pi_1(M) \to \ZZ^n$.
\end{itemize}

\begin{example}[Holonomy representation]\label{ex:holonomy}
  Suppose that $M$ is a hyperbolic three--di\-men\-sio\-nal manifold. Let
  $\rho_0\colon \pi_1(M) \to \SL$ be a lift of the
  discrete and faithful representation of $\pi_1(M)$ to
  $\mathrm{PSL}_2(\CC)$ given by the hyperbolic structure. Porti
  proves~\cite{Porti:1997} that $\rho_0$ satisfies Assumption
  $(A_\rho)$ for any system of homotopically non--trivial curves $(\gamma_1, \ldots,
  \gamma_b)$.
 \end{example}

We mention a relation between the character variety $X(M)$ and the
twisted homology group $H_1(M;\sllrho)$ for $\bm \lambda$-regular representation.  
\begin{remark}
  The twisted cohomology group $H^1(M;\sllrho)$ is the dual space of
  the twisted homology group $H_1(M;\sllrho)$ by the Universal
  Coefficient Theorem.  Following~\cite{TN} and~\cite[Proposition
  3.2.1]{CS:1983} together with $\chi(M)=0$, it is known that, near
  the discrete and faithful representation, the character variety
  $X(M)$ is a complex affine variety with dimension $b$ where $b$ is
  the number of torus boundary components.  In affine varieties, the
  dimension of Zariski tangent space is not less that the one of
  of the variety.  The Zariski tangent space of the character variety
  can be injectively mapped into $H^1(M;\sllrho)$.  We also have 
  $\dim_{\CC} H^1(M;\sllrho) = b$ near the discrete and faithful
  representation, thus the spaces $X(M)$, $T^{Zar}_{\chi_\rho}X(M)$
  and $H^1(M;\sllrho)$ have the same dimension $b$.  This means that
  characters near the discrete and faithful representation are
  smooth points and $H^1(M;\sllrho)$ is identified with the tangent
  space $T_{\chi_\rho}X(M)$ (see also \cite[Chapter 3]{Porti:1997} for
  such identifications).
\end{remark}

\subsection{Acyclicity}
\label{section:acyclicity_complex}

This subsection is devoted to prove the acyclicity of
$C_*(M;\sllmt_\rho)$, \ie, $H_*(M;\sllmt_\rho) = 0$, from the 
assumptions referred to as $(A_M)$, $(A_\varphi)$ and $(A_\rho)$;
\begin{description}
\item[$(A_M)$] the canonical inclusion $i\colon \partial M \hookrightarrow M$ of
  $\partial M$ into $M$ induces an homomorphism
  $i_*\colon H_1(\partial M; \ZZ) \to H_1(M; \ZZ)$ which is onto and
  such that $\mathrm{rank} (i_{|T^2_\ell})_* = 1$ for all boundary
  component $T^2_\ell$ of $\partial M = \bigcup_\ell T^2_\ell$;
\item[$(A_\varphi)$] for all boundary component $T^2_\ell$ of $\partial M$,
  there exist non negative integers $\big(a_1(\ell), \ldots, a_n(\ell)\big) \in \NN^n\setminus\{0\}$
  such that 
  $\varphi(\pi_1(T^2_\ell)) = \langle t_1^{a_1(\ell)} \cdots t_n^{a_n(\ell)} \rangle;$ 
\item[$(A_\rho)$] the $\SL$-representation $\rho$ is sufficiently regular,
  to be more precise $\rho$ is supposed to be $(\lambda_1, \ldots, \lambda_n)$-regular
  where $(\lambda_1, \ldots, \lambda_n)$ is a system of longitudinal curves
  on $\partial M = \bigcup_\ell T^2_\ell$.
\end{description}

\subsubsection{The acyclicity of local system for manifolds with tori--boundary}
\label{section:acyclicity_boundary}

In this section we suppose that the representations $\rho\colon
\pi_1(M) \to \SL$ under consideration satisfy Assumption~$(A_\rho)$.

We prove that the twisted chain complex $C_*(M; \sllmt_\rho)$ is
acyclic in that case.

\begin{proposition}\label{prop:acyclic}
  We have $H_*(M; \sllmt_\rho) = 0$.
\end{proposition}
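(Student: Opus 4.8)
The strategy is to reduce the boundary case to the already--handled closed case (Proposition~\ref{prop:acyclicwithout}) and to Porti's computation of the $\sllrho$-twisted homology of $M$ and $\bord M$, via the Multiplicativity Lemma applied to the pair $(M, \bord M)$. Concretely, I would first record the twisted homology of a single boundary torus $T^2_\ell$: since the $\SL$-restriction to $\pi_1(T^2_\ell)$ is non-trivial by Assumption~$(A_\rho)$, one knows $H_0(T^2_\ell; \sllrho)$ and $H_2(T^2_\ell; \sllrho)$ are one--dimensional (generated by $P^\rho_\ell$, resp.\ $P^\rho_\ell \otimes T^2_\ell$) and $H_1(T^2_\ell;\sllrho)$ is two--dimensional, spanned by $P^\rho_\ell \otimes \mu_\ell$ and $P^\rho_\ell \otimes \lambda_\ell$. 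Then I would compute the $\sllmt_\rho$-twisted homology of $\bord M$: because $\varphi_{|\pi_1(T^2_\ell)}$ is non-trivial of rank one by Assumption~$(A_\varphi)$, tensoring with $\CC(\bm t)$ kills the invariant vector $P^\rho_\ell$ whenever the corresponding group element has non-trivial $\varphi$-image, so a Milnor-sequence / direct computation (exactly as in Lemma~\ref{lemma:free}, applied to each torus) gives $H_*(\bord M; \sllmt_\rho) = 0$.

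\textbf{Main argument.} With $H_*(\bord M; \sllmt_\rho) = 0$ in hand, the long exact sequence of the pair $(M, \bord M)$ with $\sllmt_\rho$-coefficients yields isomorphisms $H_*(M; \sllmt_\rho) \cong H_*(M, \bord M; \sllmt_\rho)$. By Poincar\'e--Lefschetz duality (over the field $\CC(\bm t)$, using that $\sll$ carries the $Ad$-invariant Killing form so that $\sllmt_\rho$ is isomorphic to its dual as a coefficient system), $H_k(M, \bord M; \sllmt_\rho) \cong H^{3-k}(M; \sllmt_\rho) \cong H_{3-k}(M; \sllmt_\rho)$ (the last step again by the Universal Coefficient Theorem over a field). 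Combining, $H_k(M;\sllmt_\rho) \cong H_{3-k}(M;\sllmt_\rho)$ for all $k$. Since $H_0(M;\sllmt_\rho)$ is the coinvariants of $\sllmt$ under $\pi_1(M)$ and $\varphi$ is surjective onto $\ZZ^n$ with $n \geq 1$, there is a group element $\gamma$ with $\varphi(\gamma) \neq 1$; then $(1 - \otAd{\varphi}{\rho}(\gamma))$ acts invertibly enough on $\sllmt$ (its determinant is a non-zero element of $\CC(\bm t)$, as it is a non-trivial function of the $t_i$) to force $H_0(M; \sllmt_\rho) = 0$, hence also $H_3(M;\sllmt_\rho) = 0$ by the duality isomorphism. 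It remains to kill $H_1$ and $H_2$.

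\textbf{Finishing $H_1$ and $H_2$.} For $H_2$: the map $H_2(\bord M; \sllrho) \to H_2(M;\sllrho)$ over $\CC$ is an isomorphism by Porti (the classes $P^\rho_\ell \otimes T^2_\ell$ form a basis), and I would argue that after base change to $\CC(\bm t)$, using $H_2(\bord M; \sllmt_\rho)=0$, the module $H_2(M;\sllmt_\rho)$ must vanish: any non-zero torsion element would propagate, via the (now collapsing) long exact sequence of $(M,\bord M)$ and the duality isomorphism $H_2(M;\sllmt_\rho) \cong H_1(M;\sllmt_\rho)$, to a contradiction with the $\bm\lambda$-regularity surjection $\bigoplus_\ell H_1(\lambda_\ell;\sllrho) \to H_1(M;\sllrho)$ once it is specialized. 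More cleanly: condition~(2) of $\bm\lambda$-regularity says the longitudes generate $H_1(M;\sllrho)$, and since $\varphi(\lambda_\ell) = 1$ by definition of the multi-longitude, the specialization $\bm t = 1$ of a generating set of $H_1(M;\sllmt_\rho)$ would be exactly $\{P^\rho_\ell \otimes \lambda_\ell\}$; a Nakayama-type / support argument over the PID $\CC[t_1^{\pm 1}]$ (working one variable at a time, as in Lemma~\ref{lemma:free}) then shows $H_1(M;\sllmt_\rho)$ has no free part, and the Milnor exact sequence forces it to vanish, whence $H_2(M;\sllmt_\rho)=0$ by duality. \textbf{The hard part} will be this last bookkeeping: making the one-variable-at-a-time induction interact correctly with the $\bm\lambda$-regularity hypotheses on $H_1(M;\sllrho)$, i.e.\ showing that the three conditions in the definition of $\bm\lambda$-regularity are exactly what is needed to pass from ``$H_*(M;\sllrho)$ is as small as Porti says'' to ``the $\CC(\bm t)$-coefficient homology is zero''; condition~(3), handling the parabolic boundary components where $P^\rho_\ell$ stays invariant under the whole peripheral subgroup, is the subtle point, since there one really needs $\rho(\lambda_\ell)\neq\pm\I$ together with $\varphi(\mu_\ell)\neq 1$ to kill the relevant cycle. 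I would structure the proof as an induction on $n$ with the Milnor sequence (\ref{eqn:MilnorSq}) as the inductive engine, the base case $n=0$ being the closed-manifold-style statement applied to the infinite cyclic cover, exactly parallel to Lemma~\ref{lemma:free}.
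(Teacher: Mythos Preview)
Your duality detour is correct as far as it goes: computing $H_*(\bord M;\sllmt_\rho)=0$ and invoking Poincar\'e--Lefschetz duality legitimately kills $H_0$ and $H_3$ and gives $\dim H_1=\dim H_2$. But none of this shortcuts the actual difficulty, and the place where you wave your hands (``a Nakayama-type / support argument \ldots\ as in Lemma~\ref{lemma:free}'') is exactly where the real work lives. Lemma~\ref{lemma:free} is the \emph{closed} case; its Milnor-sequence argument collapses to an isomorphism $t_n-1$ only because $H_*(M;\msll{F}_\rho)=0$ there. In the boundary case $H_1(M;\sllrho)$ and $H_2(M;\sllrho)$ are each $b$-dimensional, so the Milnor long exact sequence~(\ref{eqn:MilnorSq2}) has genuinely nontrivial connecting maps, and nothing you have written so far rules out a free summand in $H_1(\infcover{M};\sllrho)$.

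The paper's argument (Lemma~\ref{lemma:free bound}) is more concrete and does not pass through duality or the pair $(M,\bord M)$ at all. For the one-variable base case it sets up a commutative diagram of Milnor sequences, the top row for the annuli $A_\ell\subset\infcover{M}$ lying over each $T^2_\ell$ and the bottom row for $M$ itself. The explicit formula
\[
\delta_\partial\big(\bbrack{P^\rho_\ell\otimes T^2_\ell}\big)=(1+t+\cdots+t^{a_\ell-1})\,\bbrack{P^\rho_\ell\otimes\lambda_\ell},
\]
together with the $\bm\lambda$-regular bases $\{\bbrack{P^\rho_\ell\otimes T^2_\ell}\}$ of $H_2(M;\sllrho)$ and $\{\bbrack{P^\rho_\ell\otimes\lambda_\ell}\}$ of $H_1(M;\sllrho)$, lets one chase the diagram to a contradiction: if $H_1(\infcover{M};\sllrho)$ had positive free rank then $\ker\delta\neq 0$, but any nonzero $\xi\in\ker\delta$ would be detected, via the boundary row and the map $t=1$, as a nonzero element of $H_1(M;\sllrho)$, contradicting $\delta(\xi)=0$. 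This annuli comparison is the missing idea in your sketch; your ``specialization $\bm t=1$ of a generating set'' does not supply it, because knowing what $H_1(M;\sllrho)$ looks like at $t=1$ says nothing by itself about free rank over $\CC[t,t^{-1}]$. (Incidentally, condition~(3) of $\bm\lambda$-regularity is not the crux here; what is used is Porti's identification of the bases of $H_1$ and $H_2$, and your induction should start at $n=1$, not $n=0$.)
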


The proof of Proposition~\ref{prop:acyclic} is based on Milnor's
construction~\cite{Milnor:1968}, but the techniques are rather different,
actually we use the restriction map induces by the inclusion $\partial M \hookrightarrow M$.

Let $\infcover{M}$ denote the infinite cyclic covering of $M$.
We have $\ker (\mathrm{pr}_n \circ \varphi) = \pi_1(\infcover{M})$ where
$\mathrm{pr}_n \colon \ZZ^n \to \ZZ$ denotes the projection by substituting $t_1 = \cdots = t_{n-1} = 1$.
We use the symbol $F$ for the fraction field $\CC(t_1, \ldots, t_{n-1})$.
The action of
$\otAd{\varphi}{\rho^{-1}}$ of $\CC[t_n, t_n^{-1}] \otimes_{\CC} \msll{F}_\rho$
is given by the tensor product of $\mathrm{pr}_n \circ \varphi$ and
$\otAd{(\mathrm{pr}_{1, \ldots, n-1}) \circ \varphi}{\rho^{-1}}$
where $\mathrm{pr}_{1, \ldots, n-1}$ denotes the projection by substituting $t_n = 1$.
Moreover, one can observe that under the inclusion $\ZZ[t_n, t_n^{-1}] \to \CC[t_n, t_n^{-1}]$,
$$
C_*(\infcover{M}; \msll{F}_\rho) \simeq 
C_*(M; \CC[t_n, t_n^{-1}] \otimes_\CC \msll{F}_\rho).
$$ 
The Milnor sequence:
\begin{equation}\label{eqn:MilnorSq}
    0 \to
    C_*(\infcover{M}; \msll{F}_\rho) \xrightarrow{t_n - 1} 
    C_*(\infcover{M}; \msll{F}_\rho) \xrightarrow{t_n = 1} 
    C_*(M; \msll{F}) \to
    0
\end{equation}
 induces the
long exact sequence in twisted homology:
\begin{align}\label{eqn:MilnorSq2}
    0 &\to 
    H_2(\infcover{M}; \msll{F}_\rho) \xrightarrow{t_n - 1} 
    H_2(\infcover{M}; \msll{F}_\rho) \xrightarrow{t_n = 1} 
    H_2(M; \msll{F}_\rho)  \\
    & \xrightarrow{\delta} 
    H_1(\infcover{M}; \msll{F}_\rho) \xrightarrow{t_n - 1}  
    H_1(\infcover{M}; \msll{F}_\rho) \xrightarrow{t_n = 1} 
    H_1(M; \msll{F}_\rho)  \to
    0. \nonumber
\end{align}

Proposition~\ref{prop:acyclic} is a consequence of the following lemma.
\begin{lemma}\label{lemma:free_bound}
  Let $F$ be the fraction field $\CC(t_1, \ldots, t_{n-1})$.  The
  homology group $$H_*(M; \CC[t_n, t_n^{-1}] \otimes_\CC \msll{F}_\rho)$$ has
  no free part.
\end{lemma}
\begin{proof}[Proof]
  The proof is by induction on the number $n$ of variables $t_1, \ldots, t_n$. 
  The first step is to prove the lemma in the case of a
  single variable $t$.

\medskip

\noindent \textbf{(i)}. 
We prove that \emph{$H_*(M; \CC[t, t^{-1}] \otimes_\CC \sllrho)$ has no free part.}
  
Let $A_\ell$ be the annulus in $\infcover{M}$ over $T_\ell^2$ and $b$ the
number of the components of $\bord M$. The Milnor sequence for the
boundary $\partial M$ induces the first line of the following
commutative diagram, the second one is the exact
sequence of Equation~(\ref{eqn:MilnorSq2}), and the diagram is commutative because
all the constructions are natural:
$$
\xymatrix@-.7pc{ 
  0 \ar[r] & 
  \displaystyle{\mathop{\oplus}_{\ell=1}^{b} H_2(T_\ell^2; \sllrho)} \ar[r]^-{\delta_\bord} \ar[d]^-\simeq & 
  \displaystyle{\mathop{\oplus}_{\ell=1}^{b} H_1(A_\ell; \sllrho)}  \ar[r]^{t - 1} \ar[d] & 
  \displaystyle{\mathop{\oplus}_{\ell=1}^{b} H_1(A_\ell; \sllrho)} \ar[r]^-{t = 1} \ar[d] & 
  \cdots \\
  \cdots \ar[r]^-{t=1} & 
  H_2(M; \sllrho) \ar[r]^-{\delta} &
  H_1(\infcover{M}; \sllrho) \ar[r]^{t - 1} & 
  H_1(\infcover{M}; \sllrho) \ar[r]^-{t = 1} & 
  \cdots }
$$

The proof is by contradiction, so we make the following hypothesis:

\begin{center} 
{\noindent ($\mathcal{H}$) \quad
    \emph{
      $H_1(\infcover{M}; \sllrho) \simeq H_1(M; \CC[t, t^{-1}] \otimes_\CC \sllrho)$ 
       has a free part of rank $r>0$}. 
     }
\end{center}

\noindent First observe that (because $\chi(M) = 0$ and $H_0(\infcover{M}; \sllrho) = 0$): 
\begin{equation}\label{eqn:rank}
  \mathrm{rk} \, H_2(\infcover{M}; \sllrho) = 
  \mathrm{rk} \, H_1(\infcover{M}; \sllrho) = r.
\end{equation}
 
Our proof is as follows and based on the following technical claim.
\begin{claim}\label{claim:map_Delta}
  The map 
  $$\delta_{\bord} : 
    \bigoplus_{\ell=1}^{b} H_2(T_\ell^2; \sllrho) \to
    \bigoplus_{\ell=1}^{b} H_1(A_\ell; \sllrho)$$
  in the previous diagram is
  non--trivial, and moreover for all $\ell$, 
  $\delta_{\bord}\left( \bbrack{P_\ell^\rho \otimes T^2_\ell} \right)$  
  is non zero.
\end{claim}
\begin{proof}[Proof of the Claim]
  Let $P_\ell^\rho$ $(1 \myleq \ell \myleq b)$ be the chosen invariant
  vector in $\sll$.  Since $\bord M = \cup_{\ell=1}^b T^2_\ell$, we
  have $H_0(\partial M; \sllrho) = \bigoplus_{\ell=1}^{b}
  H_0(T^2_\ell; \sllrho).$ It follows from our assumptions that
  $H_2(M; \sllrho)$ is generated by the vectors $\bbrack{P_\ell^\rho
    \otimes T^2_\ell}$ $(1 \myleq \ell \myleq b)$ and $H_1(M;
  \sllrho)$ is also generated by the vectors $\bbrack{P^\rho_\ell
    \otimes \lambda_\ell}$ $(1 \myleq \ell \myleq b)$.  The space
  $H_2(T_\ell^2; \sllrho)$ is generated by $\bbrack{P_\ell^\rho
    \otimes T^2_\ell}$ and we have:
  \begin{equation}\label{eqn:partial}
    \delta_{\partial}\left( 
      \bbrack{P_\ell^\rho \otimes T^2_\ell} 
     \right) = 
       (1 + t + \cdots + t^{a_\ell - 1})
       \bbrack{P^\rho_\ell \otimes \lambda_\ell},
  \end{equation}
  where $\varphi(\mu_\ell) = t^{a_\ell}$, $a_\ell>0$.  Next it is easy to observe
  that each $(\sum_{k=0}^{a_\ell - 1} t^k) \bbrack{P^\rho_\ell \otimes \lambda_\ell}$ 
  is a non zero element in $H_1(\infcover{M}; \sllrho)$,
  because its image by the map $(t=1)$ is 
  $a_\ell \bbrack{P^\rho_\ell \otimes \lambda_\ell}$, 
  which is non zero in $H_1(M; \sllrho)$. 
  This proves that $\delta_{\bord M} \ne 0$.
\end{proof}

Using the exactness of the Milnor sequence~(\ref{eqn:MilnorSq2}) and
 Equation~(\ref{eqn:rank}), we know that
$$
\im (t=1) \simeq 
  H_2(\infcover{M}; \sllrho)/{\left( {(t-1)H_2(\infcover{M}; \sllrho} \right)} \ne 0.
$$ 
Since $\im (t=1) = \ker \delta$, we deduce that $\ker \delta \ne 0$. 
Let $\xi$ be a non zero element in $\ker \delta$ and write it in
$H_2(M; \sllrho)$ as follows:
\[
\xi = \sum_{\ell=1}^{b} b_\ell \bbrack{P^\rho_\ell \otimes T^2_\ell}.
\]
One has (see Equation~(\ref{eqn:partial})):
\[
\delta_{\partial}(\xi) = 
\sum_{\ell=1}^{b} b_\ell
(1 + t + \cdots + t^{a_\ell - 1})
\bbrack{P^\rho_\ell \otimes \lambda_\ell}
\]
and thus,
\[
(t=1) \circ \delta_{\partial} (\xi) = 
  \sum_{\ell=1}^{b} a_\ell b_\ell \bbrack{P^\rho_\ell \otimes \lambda_\ell}.
\]
Now we prove by contradiction that $\delta_{\partial}(\xi)$ is non
zero. If $\delta_{\partial}(\xi) = 0$, then its image by $(t=1)$ is
also zero, so $a_\ell = 0$ or $b_\ell = 0$, for all $i$. Since $a_\ell > 0$,
for all $i$, we deduce that $b_\ell = 0$, for all $i$. So that, $\xi =
0$, which is a contradiction and thus $\delta_{\partial}(\xi) \ne 0$.

With our assumption we have 
$(t=1)\circ \delta_\partial(\xi) = 
  \sum_{\ell=1}^{b} a_\ell b_\ell \bbrack{P^\rho_\ell \otimes \lambda_\ell}$, 
and this element is non zero in $H_1(\infcover{M}; \sllrho)$ as we seen. 
But this is in contradiction with the fact that $\xi$ is chosen in $\ker \delta_\partial$, 
so that the
hypothesis ($\mathcal{H}$) on the free part of 
$H_1(M; \CC[t, t^{-1}] \otimes_\CC \sllrho)$ is absurd and 
proves Lemma~\ref{lemma:free_bound} in the case of a single variable.

\medskip 

\noindent \textbf{(ii)}. 
Now we 
\emph{finish the proof by induction on  the number of variables}, 
and suppose that
$$H_*(M; \CC[t_{n-1}, t_{n-1}^{-1}] \otimes_\CC \msllrho{t_1, \ldots, t_{n-2}})$$ 
has no free part. 
Thus, $H_*(M; \msllrho{F})$ vanishes and 
the long exact sequence~(\ref{eqn:MilnorSq2}) induces
the isomorphism:
\[
  0 \to 
  H_*(\infcover{M}; \msllrho{F}) \xrightarrow[\simeq]{t_n - 1}  
  H_*(\infcover{M}; \msllrho{F}) \to
  0.
\]
As a conclusion, $C_*(M; \msllrho{F})$ is acyclic which proves Lemma~\ref{lemma:free_bound}.

\end{proof}

Thus, the twisted complex $C_*(M; \sllmt_\rho)$ is acyclic and the torsion is
well--defined (even its sign if we provide $M$ with its natural
homology orientation, see e.g.~\cite{Turaev:2002}):
\begin{equation}\label{def:polytorsion}
  \PolyTors{M}{\otAd{\varphi}{\rho}}(t_1, \ldots, t_n)
  =
  \tau_0 \cdot 
  \Tor{C_*(M;\sllmt_\rho)}{\cbasis{*}}{\emptyset} 
    \in \CC(t_1, \ldots, t_n).
\end{equation}
Here $ \tau_0 = \mathrm{sgn} (\Tor{C_*(M; \IR)}{\cbasis{*}_\IR}{\hbasis{*}_\IR})$. 

\section{Examples of computations}
\label{section:exemples}

We compute the polynomial torsions for the figure knot exterior and
the Whitehead link exterior by using Fox differential calculus as
shown in~\cite{Milnor:1968,Kitano, KL}.

First, we consider the
Jacobian matrix using Fox free differential calculus associated to a
Wirtinger presentations of a link group.  
To express the polynomial torsion, we need a square minor in the
Jacobian matrix.  Since the number of relations in a Wirtinger
presentation is one less than that of generators, we have a square
minor in the Jacobian matrix by dropping one column.  For an
$\SL$-representation $\rho$ of the link group, when we replace each
element of the link group in the square minor of the Jacobian matrix
by the $3 \times 3$ matrix derived from the action of
$\otAd{\varphi}{\rho}$, we obtain a large matrix whose entries are
Laurent polynomials with coefficients in $\CC$.  Then we can express
the polynomial torsion for the link exterior as the rational function
whose numerator is the determinant of the square minor replaced each
component with $\otAd{\varphi}{\rho}$.  The denominator of the
polynomial torsion is the characteristic polynomial of the
$\mathrm{SL}_3(\CC)$-element given by the generator corresponding to
the dropped column from the Jacobian matrix.  It remains a problem to
construct $\SL$-representations of link group.  However we can find
explicit constructions for the figure eight knot
in~\cite{KirkKlassenMathAnn:ChernSimon} and for the Whitehead link
in~\cite{HLM92:characterization}.

\subsection{The figure eight knot  exterior}
We consider the figure eight knot $K$ as in Figure~\ref{fig:figure8_knot}.
The knot group $\pi_1(E_K)$ is expressed as 
$$
E_K = \langle x, y \,|\, [x^{-1}, y]x = y[x^{-1}, y] \rangle 
$$
where $E_K$ is the complement an open tubular neighbourhood $N(K)$ of $K$ in $S^3$. 
\begin{figure}[!h]
  \begin{center}
    \includegraphics[scale=.6]{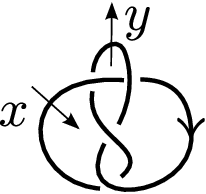}
  \end{center}
  \caption{The figure eight knot}
  \label{fig:figure8_knot}
\end{figure}
The following correspondences give an $\SL$-representation $\rho_{\sqrt{s}, u}$ of $\pi_1(E_K)$
\begin{equation}
x \mapsto 
\begin{pmatrix}
  \sqrt{s} & 1/\sqrt{s} \\
  0 & 1/\sqrt{s}
\end{pmatrix},
\quad 
y \mapsto 
\begin{pmatrix}
  \sqrt{s} & 0 \\
  -u\sqrt{s} & 1/\sqrt{s}
\end{pmatrix}
\end{equation}
when the pair $(s, u)$ is a root of $\phi(s, u) = u^2 + (3 - (s+1/s))(u+1)$.

By using formula in~\cite{Milnor:1968,Kitano, KL},
the polynomial torsion is expressed as 
\[
\PolyTors{E_K}{\otAd{\varphi}{\rho_{\sqrt{s}, u}}}(t)%
= \tau_0 \cdot %
\frac{%
  \det \Phi \left(%
    \frac{\partial}{\partial y}\,%
    [x^{-1}, y]x[y, x^{-1}]y^{-1}%
  \right)%
}{%
  \det \Phi(x-1)%
}
\]
where $\Phi$ is the linear extension of $\otAd{\varphi}{\rho_{\sqrt{s}, u}}$ on $\ZZ[\pi_1(E_L)]$.
The Fox differential turns into 
\[
\frac{\partial}{\partial b}%
\left(%
  [x^{-1}, y]x[y, x^{-1}]y^{-1}%
\right)
= x^{-1} - x^{-1}yxy^{-1} + x^{-1}yxy^{-1}x - yx^{-1} -1.
\]
Therefore the numerator of $\PolyTors{E_K}{\otAd{\varphi}{\rho_{\sqrt{s}, u}}}(t)$
turns out
$$ \tau_0 \cdot \det \Phi(x^{-1} - x^{-1}yxy^{-1} + x^{-1}yxy^{-1}x - yx^{-1} -1).$$ 

When we choose the basis $\{E, H, F\}$ in $\sll$, the adjoint actions
$Ad_{\rho(x)^{-1}}$ and $Ad_{\rho(y)^{-1}}$ are represented by the following 
upper and lower triangular matrices:
\[
Ad_{\rho(x)^{-1}}=
  \begin{pmatrix}
    1/s & 2/s & -1/s \\
    0 & 1 & -1 \\
    0 & 0 & s
  \end{pmatrix}
\quad
Ad_{\rho(y)^{-1}}=
  \begin{pmatrix}
    1/s & 0 & 0 \\
    -u & 1 & 0 \\
    -su^2 & 2su & s
  \end{pmatrix}.
\]

Calculating the determinant and reducing with the equation $\phi(s, u)=0$, we can obtain the polynomial as
\begin{align}\label{eqn:PolyTorsion_figure8}
  \PolyTors{E_K}{\otAd{\varphi}{\rho_{\sqrt{s}, u}}}(t)
  &=
  \frac{%
    \tau_0 \cdot \frac{1}{t^3} \cdot (t-1)^2(t-s)(t-1/s)(t^2 - (2 s + 2/s +1)t +1)%
  }{%
    (t-s)(t-1)(t-1/s)%
  } \nonumber \\
  &= \tau_0 \cdot 
    \frac{1}{t^3} 
    (t-1)\left( t^2 - (2I_x^2 -3)t +1\right)
\end{align}
where $I_x = \sqrt{s} + 1/\sqrt{s}$ is the trace function of the meridian $x$.
Note that we can use $I_y$ instead of $I_x$ since all generators in a Wirtinger presentation are conjugate.

\subsection{The Whitehead link exterior}
\label{section:polynomial_torion_Whiteheadlink}
Let $L$ be the Whitehead link and choose
the following Wirtinger presentation of the Whitehead link group:
\[
\pi_1(E_{L}) = \langle a, b \,|\, awa^{-1}w^{-1} \rangle
\text{ where } w = bab^{-1}a^{-1}b^{-1}ab.
\]
\begin{figure}[h]
  \centering
  \includegraphics[scale=.5]{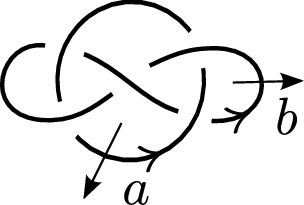}
  \caption{The Whitehead link with meridians}
  \label{fig:WhiteheadLinkMeridians}
\end{figure}

Hilden, Lozano and Montesionos has shown an
explicit description of the character variety of the Whitehead link
group in~\cite{HLM92:characterization}.  Trace functions play the role
of local coordinates in this description.   We set $x$, $y$ and $v$ as
$x=I_a$, $y=I_b$ and $v=I_{ab}$, where $I_\gamma \colon X(E_L) \to \CC$ is again given by $I_\gamma (\chi) = \chi(\gamma)$.  Then the character variety of
$X(E_L)$ is expressed as
\[
X(E_L) = \{(x, y, v)\in \CC^3 \,|\, p(x, y, v) q(x, y, v) = 0\},
\]
where 
\begin{align*}
  p(x, y, v) &= xy - (x^2 + y^2 - 2)v + xyv^2 - v^3,\\
  q(x, y, v) &= x^2 + y^2 + v^2 -xyv - 4.
\end{align*}
The component of irreducible characters is given by 
\[ 
X^{\mathrm{irr}}(E_L) =\{(x, y, v)\in \CC^3 \,|\, p(x, y, v) = 0\} \setminus R
\]
where $R = \{x = \pm 2, v= \pm y\} \cup \{y = \pm 2, v = \pm x\}$.  We
consider an $\SL$-representation $\rho$ of $\pi_1(E_L)$ whose
character is contained in $X^{\mathrm{irr}}(E_L)$.  

Let $\rho \colon \pi_1(E_L) \to \SL$ be an irreducible representation and consider  
the abelianization $\varphi$ of $\pi_1(E_L)$: $\varphi\colon \pi_1(E_L) \to \ZZ^2 = \langle t_1, t_2 \,|\, t_1t_2 = t_2t_1 \rangle$ defined by $\varphi(a) = t_1$ and $\varphi(b)
= t_2$.  

Using irreducibility of $\rho$, we can suppose that the pair of
$\rho(a)$ and $\rho(b)$ are expressed as (after taking conjugation with eigenvectors of $\rho(a)$ and $\rho(b)$ if necessary):
\[
\rho(a) = 
\left(
  \begin{array}{cc}
    \alpha & 1 \\
    0 & 1/\alpha
  \end{array}
\right)
\quad
\rho(b) = 
\left(
  \begin{array}{cc}
    \beta & 0 \\
    \gamma & 1/\beta
  \end{array}
\right).
\]

Note that we have set the local coordinates $(x, y, v)$ as 
$x = \alpha + \alpha^{-1}$, $y = \beta + \beta^{-1}$ and 
$v = \gamma + \alpha\beta + \alpha^{-1}\beta^{-1}$. 

By using formula in~\cite{Milnor:1968,Kitano, KL},
the polynomial torsion is expressed as 
\[
\PolyTors{E_L}{\otAd{\varphi}{\rho}}(t_1, t_2)
= 
\tau_0 \cdot 
\frac{\det \Phi \left( \frac{\partial}{\partial b}\, awa^{-1}w^{-1}  \right)}{\det \Phi(a-1)}
\]
where $\Phi$ is the linear extension of $\otAd{\varphi}{\rho^{-1}}$ on $\ZZ[\pi_1(E_L)]$.
The Fox differential turns into 
\[
\frac{\partial}{\partial b}\left( {awa^{-1}w^{-1} }\right)
= (a-1)(1 - bab^{-1} - bab^{-1}a^{-1}b^{-1} + bab^{-1}a^{-1}b^{-1}a).
\]
Therefore the polynomial torsion $\PolyTors{E_L}{\otAd{\varphi}{\rho}}(t_1, t_2)$
turns out 
$$\PolyTors{E_L}{\otAd{\varphi}{\rho}}(t_1, t_2) 
= \tau_0 \cdot \det \Phi(1 - bab^{-1} - bab^{-1}a^{-1}b^{-1} +
bab^{-1}a^{-1}b^{-1}a).$$ 
So that, with $\det Ad _{\rho (a)^{-1}} =
\det Ad_{\rho(b)^{-1}} = 1$ in mind,
\begin{align*}
  \PolyTors{E_L}{\otAd{\varphi}{\rho}}(t_1, t_2)
  &= \tau_0 \cdot 
     \det \Phi(1 - bab^{-1} - bab^{-1}a^{-1}b^{-1} + bab^{-1}a^{-1}b^{-1}a)\\
  &= \tau_0 \cdot 
     \det \Phi\left( (bab^{-1}a^{-1})(aba^{-1}b^{-1} - a - b^{-1} + b^{-1}a ) \right)\\
  &= \tau_0 \cdot  
     \det \Phi(aba^{-1}b^{-1} - a - b^{-1} + b^{-1}a ).
\end{align*}

When we choose the basis $\{E, H, F\}$ in $\sll$, the adjoint actions
$Ad_{\rho(a)^{-1}}$ and $Ad_{\rho(b)^{-1}}$ are represented by the following 
upper and lower triangular matrices
\[
Ad_{\rho(a)^{-1}}=
\left(
  \begin{array}{ccc}
    1/\alpha^2 & 2/\alpha & -1 \\
    0 & 1 & -\alpha \\
    0 & 0 & \alpha^2
  \end{array}
\right)\, \hbox{and}\,\,
Ad_{\rho(b)^{-1}}=
\left(
  \begin{array}{ccc}
    1/\beta^2 & 0 & 0 \\
    \gamma/\beta & 1 & 0 \\
    -\gamma^2 & -2\beta\gamma & \beta^2
  \end{array}
\right).
\]

Calculating the determinant and reducing the degree of $\gamma$ by
using the following identity $p(\alpha + \alpha^{-1}, \beta + \beta^{-1}, \gamma + \alpha
\beta + \alpha^{-1}\beta^{-1})=0$, we can see that the polynomial
torsion is expressed as
\begin{align}\label{eqn:PolyTorsion_WL}
  &\PolyTors{E_L}{\otAd{\varphi}{\rho}}(t_1, t_2)\\
  &= \tau_0 \cdot 
    \frac{(t_1-1)(t_2-1)}{t_2^3} 
    \left( -2xyv t_1t_2 +
    x^2 t_1(t_2+1)^2 + y^2 (t_1+1)^2 t_2 - (t_1+1)^2 (t_2+1)^2
    \right). \nonumber
\end{align}

\section{Change of coefficients}
\label{section:naturality}

The aim of this section is to give some notation and explanation
about the reduction of variables (for more details,
see~\cite{Milnor:1966}).

Let $\varphi$ be a surjective homomorphism of $\pi_1(M)$ onto
$\ZZ^n=\langle t_1, \ldots, t_n \,|\, t_i t_j = t_j t_i \rangle$.  
We let $h_{(a_1, \ldots, a_n)}$ be the homomorphism of $\ZZ^n$ into
$\ZZ=\langle t \rangle$, given by 
\[
h_{(a_1, \ldots, a_n)} (t_1, \ldots, t_n)
=
(t^{a_1}, \ldots, t^{a_n})
\]
where each $a_\ell$ is a positive
integer. We use the notation $\varphi_{(a_1, \ldots, a_n)}$ for the composition of
$\varphi$ and $h_{(a_1, \ldots, a_n)}$:
\begin{equation}\label{diagram:Zm_to_Z}
  \xymatrix@R=20pt{
    \pi_1(M) \ar[r]^-\varphi \ar[dr]_{\varphi_{(a_1, \ldots, a_n)}} & \ZZ^n \ar[d]^{h_{(a_1, \ldots, a_n)}} \\  & \ZZ}
\end{equation}

Observe that $\varphi_{(\alpha_1, \ldots,
  \alpha_n)}$ is onto if and only if G.C.D $(a_1, \ldots, a_n)$
is $1$.  Moreover $\varphi_{(\alpha_1, \ldots, \alpha_n)}$
satisfies the condition $(A_\varphi)$ since each $a_\ell$ is positive.

Later in this paper, we often make a reduction of several variables
into one variable.
We let $\varphi$ be a surjective homomorphism of $\pi_1(M)$ onto
$\ZZ^n$ satisfying $(A_\varphi)$ and $\rho$ be an $\SL$-representation
satisfying $(A_\rho)$.  We choose relatively prime positive integers
$(a_1, \ldots, a_n)$ and let $\varphi_{(a_1, \ldots, a_n)}$ be the
composition of $\varphi$ and $h_{(a_1, \ldots, a_n)}$.  From
Section~\ref{section:acyclicity_complex}, the Reidemeister torsion
$\PolyTors{M}{\otAd{\varphi}{\rho}}(t_1, \ldots, t_n)$ and
$\PolyTors{M}{\otAd{\varphi_{(a_1, \ldots, a_n)}}{\rho}}(t)$ are
defined for both the abelian homomorphisms $\varphi$ and
$\varphi_{(a_1, \ldots, a_n)}$.
The following result (with sign) is a consequence of the definitions. 

\begin{proposition}[\cite{Milnor:1966}]\label{cor:naturality}
  One has the following formula:
  \[
  \PolyTors{M}{\otAd{\varphi}{\rho}}(t^{a_1}, \ldots, t^{a_n}) =
  \PolyTors{M}{\otAd{\varphi_{(a_1, \ldots, a_n)}}{\rho}}(t).
  \]
\end{proposition}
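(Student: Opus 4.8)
The plan is to unwind both sides from the definition of the polynomial torsion and recognize that the two twisted chain complexes are literally the same, so that the proposition reduces to a change-of-ring identity for twisted homology and torsion. First I would observe that $\varphi_{(a_1,\dots,a_n)} = h_{(a_1,\dots,a_n)} \circ \varphi$, where $h = h_{(a_1,\dots,a_n)} \colon \ZZ^n \to \ZZ = \langle t \rangle$ sends $t_\ell \mapsto t^{a_\ell}$. The ring homomorphism $h$ makes $\CC(t)$ into a $\CC(\bm{t})$-algebra via $t_\ell \mapsto t^{a_\ell}$, i.e.\ $\CC(t) = \CC(\bm{t}) \otimes_{\CC(\bm{t})} \CC(t)$ with this structure. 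Applying $\CC(t) \otimes_{\CC(\bm{t})} (-)$ to the complex $C_*(M; \sllmt_\rho)$ yields precisely $C_*(M; \msll{\CC(t)}_\rho)$ with the $\pi_1(M)$-action $\otAd{\varphi_{(a_1,\dots,a_n)}}{\rho}$, because the diagram~(\ref{diagram:Zm_to_Z}) commutes: the group element $\gamma$ acts on the $\CC(t)$-factor by multiplication by $h(\varphi(\gamma)) = \varphi_{(a_1,\dots,a_n)}(\gamma)$ and on the $\sll$-factor by $Ad_{\rho(\gamma)^{-1}}$ in both descriptions.

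Next I would record that $C_*(M;\msll{\CC(t)}_\rho)$ is acyclic: this follows from Section~\ref{section:acyclicity_complex} (Proposition~\ref{prop:acyclicwithout} or Proposition~\ref{prop:acyclic}) applied to the homomorphism $\varphi_{(a_1,\dots,a_n)}$, which satisfies $(A_\varphi)$ as noted just above the statement since the $a_\ell$ are positive, together with $(A_\rho)$, whose multi-longitude condition is unchanged because $\ker \varphi_{|\pi_1(T^2_\ell)} = \ker (\varphi_{(a_1,\dots,a_n)})_{|\pi_1(T^2_\ell)}$ (the map $h$ restricted to the rank-one subgroup $\varphi(\pi_1(T^2_\ell))$ is injective). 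With both complexes acyclic, the torsions are well-defined scalars in $\CC(t)^*$ and $\CC(\bm{t})^*$ respectively. Then the naturality of torsion under a change of coefficient field — here extension of scalars along the flat (indeed field) map $\CC(\bm{t}) \to \CC(t)$, $t_\ell \mapsto t^{a_\ell}$ — gives
\[
\Tor{C_*(M;\msll{\CC(t)}_\rho)}{\cbasis{*}}{\emptyset}
= \bigl(\Tor{C_*(M;\sllmt_\rho)}{\cbasis{*}}{\emptyset}\bigr)\big|_{t_\ell \mapsto t^{a_\ell}},
\]
since the geometric basis $\cbasis{*}$ of $C_*(M;\msll{\CC(t)}_\rho)$ is the image under $\CC(t)\otimes_{\CC(\bm{t})}(-)$ of the geometric basis of $C_*(M;\sllmt_\rho)$, and the determinants of the transition matrices in Equation~(\ref{def:RTorsion}) are simply the images under $t_\ell \mapsto t^{a_\ell}$ of the corresponding determinants over $\CC(\bm{t})$. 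The sign prefactor $(-1)^{|C_*|}$ is the same for both (it equals $0$ in the acyclic case), and the sign term $\tau_0$ is identical on both sides because it is computed from $C_*(M;\IR)$ with no reference to $\varphi$ at all. Multiplying by $\tau_0$ yields exactly the claimed formula.

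The only genuinely delicate point — and the one I would spell out carefully — is the compatibility of bases and the verification that extension of scalars along $t_\ell \mapsto t^{a_\ell}$ sends the torsion computation over $\CC(\bm{t})$ term-by-term to the one over $\CC(t)$; this is the content of Milnor's change-of-rings discussion in~\cite{Milnor:1966} and is where one must check that choices of $\bbasis{i}$ and homology lifts (here empty, since everything is acyclic) transport correctly. Everything else is formal bookkeeping: the commutativity of~(\ref{diagram:Zm_to_Z}), the preservation of assumptions $(A_\varphi)$ and $(A_\rho)$ under precomposition with $h_{(a_1,\dots,a_n)}$, and the $\varphi$-independence of $\tau_0$. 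Hence the proposition follows directly from the definitions, as claimed in the statement.
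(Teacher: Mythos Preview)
Your strategy is exactly what the paper intends (it offers no proof beyond ``a consequence of the definitions'' and the citation to Milnor), and your remarks about $\tau_0$ and $(-1)^{|C_*|}$ are correct. There is, however, one genuine technical slip: for $n\geq 2$ there is \emph{no} ring homomorphism $\CC(\bm t)\to\CC(t)$ sending $t_\ell\mapsto t^{a_\ell}$. For instance $t_1^{a_2}-t_2^{a_1}$ is a unit of $\CC(\bm t)$ but is sent to $0$, so $\CC(t)$ is not a $\CC(\bm t)$-algebra via this rule and the tensor product $\CC(t)\otimes_{\CC(\bm t)}C_*(M;\sllmt_\rho)$ you invoke does not exist. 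Consequently the phrase ``extension of scalars along the flat (indeed field) map $\CC(\bm t)\to\CC(t)$'' is not a valid description of what is happening.

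The fix is to drop one level and work over Laurent polynomials, where the map does exist. Both twisted complexes are obtained from the single free $R_0$-complex
\[
C_*^{0}=\sllbrarho{\bm t}\otimes_{\ZZ[\pi_1(M)]}C_*(\ucover{M};\ZZ),\qquad R_0=\CC[t_1^{\pm1},\dots,t_n^{\pm1}],
\]
by extending scalars to the two fraction fields, and the genuine ring map $h_*:R_0\to\CC[t^{\pm1}]$, $t_\ell\mapsto t^{a_\ell}$, carries $C_*^{0}$ to the integral one-variable complex by commutativity of diagram~(\ref{diagram:Zm_to_Z}). Milnor's change-of-rings argument applies in precisely this form: since both localizations are acyclic one may choose the $\bbasis{i}$ inside $C_*^{0}$ so that the determinants in Equation~(\ref{def:RTorsion}) are nonzero Laurent polynomials whose images under $h_*$ are again nonzero (this is where the independently verified acyclicity over $\CC(t)$ is used), and then every transition determinant specializes correctly. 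With this reformulation your proof goes through.
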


\section{A derivative formula}
\label{section:derform}

We prove a relation between the polynomial torsion and 
the non--abelian Reidemeister torsion. 
More precisely, we prove that
the non--abelian Reidemeister is a sort of ``differential coefficient''
associated to the polynomial torsion.

We review the definition of the non--abelian Reidemeister torsion (for
more details, we refer to~\cite[Chap.~3]{Porti:1997}).
\begin{definition}\label{def:torsion}
  Let $M$ be a compact hyperbolic three--dimensional manifold whose
  boundary is the disjoint union of $b$ tori $\partial M = \cup_{\ell = 1}^b T^2_\ell$. Consider an
  $\SL$-representation $\rho \colon \pi_1(M) \to \SL$ which is
  $\bm{\lambda}$-regular for a set of closed loops $\bm{\lambda} =
  \{\lambda_\ell \subset T^2_\ell\,|\, 1 \myleq \ell \myleq b \}$.
  The non--abelian Reidemeister torsion
  $\RTors{M}{\smallbm{\lambda}}(\rho)$ is defined to be the sign--refined
  Reidemeister torsion for $C_*(M; \sllrho)$ and the basis
  $$\hbasis{*}_{\smallbm{\lambda}} = \big\{ 
    \bbrack{P^\rho_1 \otimes T^2_1},
      \ldots, 
    \bbrack{P^\rho_b \otimes T^2_b}, 
    \bbrack{P^\rho_1 \otimes \lambda_1},
      \ldots, 
    \bbrack{P^\rho_b \otimes \lambda_b}
                              \big\}$$ as
  \[
  \RTors{M}{\smallbm{\lambda}}(\rho) = \tau_0 \cdot \Tor{C_*(M;
    \sllrho)}{\cbasis{*}}{\hbasis{*}_{\smallbm{\lambda}}}.
  \]
\end{definition} 

\subsection{Bridge from the polynomial torsion to the non--abelian
  Reidemeister torsion}

The following theorem proves that the non--abelian Reidemeister
torsion can be deduced from the polynomial torsion
$\PolyTors{M}{\otAd{\varphi}{\rho}}(t)$ (see~\cite{YY:Fourier} for the
case of knots).

In this section, we suppose that $M$ is a compact hyperbolic three--dimensional manifold satisfying assumption 
$(A_M)$, $\varphi\colon \pi_1(M) \to \ZZ = \langle t \rangle$ is a
surjective homomorphism which satisfies assumption $(A_\varphi)$
and that $\rho$ satisfies assumption $(A_\rho)$.  We equip the three--manifold
$M$ with a distinguished homology orientation.

\begin{theorem}\label{theorem:polyvstorsion}
  The following equality holds:
  \begin{equation}\label{eqn:limit_formula}
    \lim_{t \to 1} 
    \frac{
      \PolyTors{M}{\otAd{\varphi}{\rho}}(t)
    }{
      \prod_{\ell = 1}^b (t^{a_\ell}-1)
    }
    = (-1)^b \cdot  \RTors{M}{\smallbm{\lambda}}(\rho)
    \quad \left(\bm{\lambda}=(\lambda_1, \ldots, \lambda_b)\right), 
  \end{equation}
  where $\varphi(\pi_1(T^2_\ell)) = \langle t^{a_\ell}\rangle$, $a_\ell
  \in \ZZ_{>0}$, and $b$ is the number of components of $\partial M$.
\end{theorem}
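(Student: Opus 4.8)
\emph{The plan.} I would prove the formula by exhibiting, inside the acyclic complex $C_*(M;\sllrhot)$, a based acyclic subcomplex $D_*$ assembled from Porti's distinguished chains, whose Reidemeister torsion is exactly $\prod_{\ell=1}^b(t^{a_\ell}-1)$ and whose complementary quotient specializes at $t=1$ to the complex that computes $\RTors{M}{\smallbm\lambda}(\rho)$; two applications of the Multiplicativity Lemma, one over $\CC(t)$ and one over $\CC$, then give the identity, and the sign $(-1)^b$ falls out of the comparison (this generalizes~\cite{YY:Fourier}). First I would fix a CW--structure on $M$ in which each boundary torus $T^2_\ell$ is a subcomplex carrying a single $2$--cell $f_\ell$ and single $1$--cells for $\mu_\ell$ and $\lambda_\ell$. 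Since $\varphi(\lambda_\ell)=1$, $\varphi(\mu_\ell)=t^{a_\ell}$ and $P^\rho_\ell$ is $Ad_{\rho(\pi_1 T^2_\ell)}$--invariant, the chain--level incarnation of Equation~(\ref{eqn:partial}) reads $\bord(P^\rho_\ell\otimes f_\ell)=(t^{a_\ell}-1)(P^\rho_\ell\otimes\lambda_\ell)$ in $C_*(M;\sllrhot)$, while $\bord(P^\rho_\ell\otimes\lambda_\ell)=0$; hence $D_2:=\bigoplus_\ell\CC(t)(P^\rho_\ell\otimes f_\ell)$ and $D_1:=\bigoplus_\ell\CC(t)(P^\rho_\ell\otimes\lambda_\ell)$ form a subcomplex $D_*\subset C_*(M;\sllrhot)$, acyclic because each $t^{a_\ell}-1\neq0$. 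I would complete the $P^\rho_\ell\otimes f_\ell$ and $P^\rho_\ell\otimes\lambda_\ell$ to bases of $C_2$, $C_1$ compatible with the geometric basis (the passage matrix being block diagonal, normalized to determinant one) and use these for $D_*$ and for $\bar C_*:=C_*(M;\sllrhot)/D_*$ throughout.

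By Proposition~\ref{prop:acyclic} and the previous step the complexes $C_*(M;\sllrhot)$, $D_*$ and $\bar C_*$ are all acyclic, so the Multiplicativity Lemma gives $\Tor{C_*(M;\sllrhot)}{\cbasis{*}}{\emptyset}=(-1)^{\sigma}\,\Tor{D_*}{\cbasis{*}_D}{\emptyset}\,\Tor{\bar C_*}{\cbasis{*}_{\bar C}}{\emptyset}$ with $\sigma=\alpha(D_*,\bar C_*)\in\ZZ/2\ZZ$ (the $\varepsilon$--term vanishes, all homology being trivial). A one--line evaluation from the definition of the torsion yields $\Tor{D_*}{\cbasis{*}_D}{\emptyset}=\prod_{\ell=1}^b(t^{a_\ell}-1)$, whence
\[
\frac{\PolyTors{M}{\otAd\varphi\rho}(t)}{\prod_{\ell=1}^b(t^{a_\ell}-1)}=(-1)^{\sigma}\,\tau_0\cdot\Tor{\bar C_*}{\cbasis{*}_{\bar C}}{\emptyset}.
\]
The differentials of $\bar C_*$ are polynomial in $t^{\pm1}$, and (by the next paragraph) its specialization at $t=1$ is acyclic, so the right--hand side is regular at $t=1$, with value $(-1)^{\sigma}\tau_0\cdot\Tor{\bar C_*|_{t=1}}{\cbasis{*}_{\bar C}}{\emptyset}$; this is what the limit computes.

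At $t=1$ the subcomplex $D_*$ degenerates to the \emph{zero--differential} complex $D^{(1)}_*$ with $D^{(1)}_2=\bigoplus_\ell\CC(P^\rho_\ell\otimes T^2_\ell)$, $D^{(1)}_1=\bigoplus_\ell\CC(P^\rho_\ell\otimes\lambda_\ell)$ inside $C_*(M;\sllrho)$. Assumption~$(A_\rho)$ says precisely that $\bbrack{P^\rho_\ell\otimes T^2_\ell}$ form a basis of $H_2(M;\sllrho)$ and $\bbrack{P^\rho_\ell\otimes\lambda_\ell}$ a basis of $H_1(M;\sllrho)$ (and $H_0=H_3=0$), so $D^{(1)}_*\hookrightarrow C_*(M;\sllrho)$ is a homology isomorphism; hence $\bar C_*|_{t=1}=C_*(M;\sllrho)/D^{(1)}_*$ is acyclic and the long homology exact sequence of the pair is a string of identity maps, of torsion $+1$. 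The Multiplicativity Lemma then gives $\Tor{C_*(M;\sllrho)}{\cbasis{*}}{\hbasis{*}_{\smallbm\lambda}}=(-1)^{\sigma'}\,\Tor{D^{(1)}_*}{\cbasis{*}_D}{\hbasis{*}_{\smallbm\lambda}}\,\Tor{\bar C_*|_{t=1}}{\cbasis{*}_{\bar C}}{\emptyset}$, and since $D^{(1)}_*$ has zero differential and we use its own homology basis, $\Tor{D^{(1)}_*}{\cbasis{*}_D}{\hbasis{*}_{\smallbm\lambda}}=(-1)^{|D^{(1)}_*|}$ with $|D^{(1)}_*|=b\cdot b+2b\cdot 2b=5b^2\equiv b\pmod 2$. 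Thus $\RTors{M}{\smallbm\lambda}(\rho)=(-1)^{\sigma'}(-1)^b\,\tau_0\cdot\Tor{\bar C_*|_{t=1}}{\cbasis{*}_{\bar C}}{\emptyset}$. The $\CC(t)$--dimensions of the $D_k$ and $\bar C_k$ agree with the $\CC$--dimensions of $D^{(1)}_k$ and $(\bar C_*|_{t=1})_k$, so $\sigma$ and $\sigma'$ are computed from the same integers and $\sigma=\sigma'$ in $\ZZ/2\ZZ$; dividing the two displayed relations, the factor $\tau_0$ and $(-1)^{\sigma}=(-1)^{\sigma'}$ cancel, leaving $\lim_{t\to1}\PolyTors{M}{\otAd\varphi\rho}(t)\big/\prod_{\ell=1}^b(t^{a_\ell}-1)=(-1)^b\,\RTors{M}{\smallbm\lambda}(\rho)$.

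\emph{Main obstacle.} The conceptual skeleton above is short; the real labor is the sign bookkeeping. One must get $\Tor{D_*}{\cbasis{*}_D}{\emptyset}$ to be \emph{exactly} $\prod_\ell(t^{a_\ell}-1)$, which pins down orientations of $f_\ell$ and $\lambda_\ell$ and a Fox--derivative sign convention, and these must be kept consistent with the ordering of $\hbasis{*}_{\smallbm\lambda}$ and with the definition of $\tau_0$; one must verify $\sigma=\sigma'$; and one must confirm that the homology long exact sequence in the last step genuinely contributes $+1$. A lesser technical nuisance is to arrange a CW--structure for which the Step~1 boundary formula holds with no extra boundary terms, or equivalently to replace $P^\rho_\ell\otimes[T^2_\ell]$ by a homologous chain with that property.
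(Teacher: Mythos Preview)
Your proposal is correct and follows essentially the same strategy as the paper: both introduce the based subcomplex $D_*\leftrightarrow C'_*(t)$ spanned by the distinguished Porti chains, compute its torsion as $\prod_\ell(t^{a_\ell}-1)$, and identify the limit of the quotient torsion with the non--abelian torsion. The one organisational difference worth noting is in the sign analysis at $t=1$: the paper takes the expression for $\Tor{C''_*(t)}{\cbasis[c'']{*}}{\emptyset}$, unwinds it as an explicit alternating product of base--change determinants, permutes the vectors $\cbasis[c']{i}$ past $\partial_{i+1}(\bbasis{i+1})$ by hand (picking up the sign $(-1)^{b+\alpha}\cdot\varepsilon$), and then recognises the specialised product as $\Tor{C_*}{\cbasis{*}}{\hbasis{*}}$ with its internal sign $(-1)^{|C_*|}=\varepsilon$. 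Your route, applying the Multiplicativity Lemma a second time over $\CC$ to $0\to D^{(1)}_*\to C_*(M;\sllrho)\to\bar C_*|_{t=1}\to 0$ and matching $\sigma=\sigma'$, is a cleaner packaging of the same bookkeeping; it trades the explicit permutation count for the verifications you flag (that the $\varepsilon$--term vanishes over $\CC$ since $\beta_i(C''_*)=0$ and $\beta_i(C_*)=\beta_i(C'_*)$ make each summand even, and that the homology long exact sequence has torsion $+1$ because it reduces to two identity isomorphisms in degrees $1$ and $2$). Either way the residual sign is $(-1)^b$, coming in your version from $|D^{(1)}_*|\equiv b\pmod 2$ and in the paper's from the direct permutation count.
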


Before proving this result, we give a couple of remarks.

\begin{remark}
  Using Theorems~\ref{theorem:poly} \& \ref{theorem:polyvstorsion} one
  can observe that if $\varphi\colon \pi_1(M) \to \ZZ = \langle t
  \rangle$ satisfies assumption $(A_\varphi)$, then $(t-1)^b$ divides
  $\PolyTors{M}{\otAd{\varphi}{\rho}}(t)$.
\end{remark}

\begin{remark}[The multivariable case]
  Here we suppose that $\varphi\colon \pi_1(M) \to \ZZ^n$ where $\ZZ^n
  = \langle t_1, \ldots, t_n \, |\, t_i t_j = t_j t_i, \; \forall i,j
  \rangle$.
\end{remark}
	
\begin{corollary}\label{cor:DerFormula_one _variable}
  We have the following identity:
  \begin{equation}
    \lim_{t_1, \ldots, t_n \to 1} 
    \frac{
      \PolyTors{M}{\otAd{\varphi}{\rho}}(t_1, \ldots, t_n)
    }{
      \prod_{\ell = 1}^b (t_1^{a_1^{(\ell)}} \cdots t_n^{a_n^{(\ell)}} - 1)
    } 
    = (-1)^b \cdot \RTors{M}{\smallbm{\lambda}}(\rho),
  \end{equation}
  where $\varphi(\pi_1(T_\ell^2)) = \langle t_1^{a_1^{(\ell)}} \cdots
  t_n^{a_n^{(\ell)}} \rangle$, $a_1^{(\ell)}, \ldots, a_n^{(\ell)} \in \ZZ_{>0}$.
\end{corollary}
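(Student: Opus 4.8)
The plan is to reduce Corollary~\ref{cor:DerFormula_one _variable} to the one–variable Theorem~\ref{theorem:polyvstorsion} by means of the naturality formula, Proposition~\ref{cor:naturality}. First I would choose, for the given multivariable homomorphism $\varphi\colon \pi_1(M)\to\ZZ^n$, a tuple of relatively prime positive integers $(a_1, \ldots, a_n)$ and form $\varphi_{(a_1, \ldots, a_n)} = h_{(a_1, \ldots, a_n)}\circ \varphi\colon \pi_1(M)\to\ZZ=\langle t\rangle$, which by the discussion in Section~\ref{section:naturality} still satisfies $(A_\varphi)$; moreover $\bm{\lambda}$ is unchanged under this composition, so $\rho$ still satisfies $(A_\rho)$ with the same multi--longitude. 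By Proposition~\ref{cor:naturality} we have
\[
\PolyTors{M}{\otAd{\varphi_{(a_1, \ldots, a_n)}}{\rho}}(t) = \PolyTors{M}{\otAd{\varphi}{\rho}}(t^{a_1}, \ldots, t^{a_n}),
\]
and the boundary exponents transform accordingly: $\varphi_{(a_1, \ldots, a_n)}(\pi_1(T^2_\ell)) = \langle t^{c_\ell}\rangle$ with $c_\ell = \sum_{i} a_i\, a_i^{(\ell)} \in \ZZ_{>0}$.

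Next I would apply Theorem~\ref{theorem:polyvstorsion} to $\varphi_{(a_1, \ldots, a_n)}$, which gives
\[
\lim_{t\to 1}\frac{\PolyTors{M}{\otAd{\varphi}{\rho}}(t^{a_1}, \ldots, t^{a_n})}{\prod_{\ell=1}^b (t^{c_\ell}-1)} = (-1)^b\cdot \RTors{M}{\smallbm{\lambda}}(\rho).
\]
The right--hand side does not depend on the chosen tuple $(a_1, \ldots, a_n)$, since the non--abelian torsion $\RTors{M}{\smallbm{\lambda}}(\rho)$ depends only on $M$, $\rho$ and $\bm{\lambda}$. It then remains to identify the left--hand limit with the iterated limit $\lim_{t_1, \ldots, t_n\to 1}$ of $\PolyTors{M}{\otAd{\varphi}{\rho}}(t_1, \ldots, t_n)/\prod_\ell (t_1^{a_1^{(\ell)}}\cdots t_n^{a_n^{(\ell)}}-1)$. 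For this I would use that, by Theorem~\ref{theorem:poly} together with the remark following Theorem~\ref{theorem:polyvstorsion}, the quotient
\[
G(t_1, \ldots, t_n) := \frac{\PolyTors{M}{\otAd{\varphi}{\rho}}(t_1, \ldots, t_n)}{\prod_{\ell=1}^b (t_1^{a_1^{(\ell)}}\cdots t_n^{a_n^{(\ell)}}-1)}
\]
is, up to a unit monomial, a rational function whose denominator does not vanish at $(1, \ldots, 1)$: indeed each factor $(t_1^{a_1^{(\ell)}}\cdots t_n^{a_n^{(\ell)}}-1)$ divides the polynomial torsion in $\CC[t_1^{\pm1}, \ldots, t_n^{\pm1}]$ (this is the multivariable divisibility that follows by specializing to one variable along every coprime direction, exactly as in the cited remark), so $G$ is regular at $(1, \ldots, 1)$ and the multivariable limit is simply the value $G(1, \ldots, 1)$. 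Substituting $t_i = t^{a_i}$ into $G$ and letting $t\to 1$ recovers the same value $G(1, \ldots, 1)$, since $t^{c_\ell}-1 = (t-1)(1 + t + \cdots + t^{c_\ell-1})$ and $t^{a_1^{(\ell)}a_1}\cdots t^{a_n^{(\ell)}a_n}-1$ have the same order of vanishing at $t=1$ with ratio tending to $1$. Hence the multivariable limit equals $(-1)^b\cdot\RTors{M}{\smallbm{\lambda}}(\rho)$, as claimed.

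The main obstacle is the interchange of limits implicit in passing from ``$\lim_{t\to 1}$ along the curve $t_i = t^{a_i}$'' to ``$\lim_{t_1, \ldots, t_n\to 1}$''; a priori a multivariable rational function can have different limits along different curves through $(1, \ldots, 1)$. The point that makes it work is the divisibility statement: once one knows $\prod_\ell (t_1^{a_1^{(\ell)}}\cdots t_n^{a_n^{(\ell)}}-1)$ divides $\PolyTors{M}{\otAd{\varphi}{\rho}}$ in the Laurent polynomial ring, $G$ extends to an honest rational function regular at $(1, \ldots, 1)$, so all directional limits agree and equal $G(1, \ldots, 1)$. I would therefore spend the bulk of the argument carefully establishing this multivariable divisibility (invoking Theorem~\ref{theorem:poly} and specialization via Proposition~\ref{cor:naturality} in enough coprime directions to force each boundary factor to divide), after which the corollary is immediate from Theorem~\ref{theorem:polyvstorsion}.
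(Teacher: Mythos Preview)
The paper gives no separate proof of the corollary; the intended argument is that the proof of Theorem~\ref{theorem:polyvstorsion} carries over verbatim with $\CC(t)$ replaced by $\CC(t_1,\ldots,t_n)$. Concretely, the auxiliary complex $C'_*(\bm t)$ now has boundary $\partial'\colon P^\rho_\ell\otimes T^2_\ell\mapsto (t_1^{a_1^{(\ell)}}\cdots t_n^{a_n^{(\ell)}}-1)\,P^\rho_\ell\otimes\lambda_\ell$, so Lemma~\ref{lemma:torsC'} becomes $\Tor{C'_*(\bm t)}{\cbasis[c']{*}}{\emptyset}=\prod_\ell(t_1^{a_1^{(\ell)}}\cdots t_n^{a_n^{(\ell)}}-1)$, and Lemma~\ref{lemma:limtors} goes through unchanged because the determinants in Equation~(\ref{eqn:tors2}) are Laurent polynomials in $\bm t$ whose values at $\bm t=(1,\ldots,1)$ are exactly the (nonzero) base--change determinants for $C_*$. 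In particular the multivariable limit is not obtained via a divisibility statement at all: the quotient is \emph{exhibited} as $\pm\Tor{C''_*(\bm t)}{\cbasis[c'']{*}}{\emptyset}$ (times a constant independent of $\bm t$) and then evaluated at $\bm t=(1,\ldots,1)$.

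Your route through Proposition~\ref{cor:naturality} is genuinely different, and the step you yourself flag as ``the main obstacle'' is a real gap as written. You argue: each factor $m_\ell-1$ divides $\PolyTors{M}{\otAd{\varphi}{\rho}}$, hence $G$ is regular at $(1,\ldots,1)$. But regularity of $G$ needs the \emph{product} $\prod_\ell(m_\ell-1)$ to divide, and the factors are in general not coprime: two boundary tori may have the same image monomial, or one $m_\ell$ may be a power of another, so irreducible factors of the denominator occur with multiplicity. The remark after Theorem~\ref{theorem:polyvstorsion} only gives $(t-1)^b\mid \PolyTors{M}{\otAd{\varphi}{\rho}}(t^{a_1},\ldots,t^{a_n})$ for each one--variable specialization; this does not by itself force $\prod_\ell(m_\ell-1)$ to divide in $\CC[t_1^{\pm1},\ldots,t_n^{\pm1}]$, and ``same finite limit along every positive monomial curve through $(1,\ldots,1)$'' is not a standard criterion for regularity of a rational function at that point. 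If you try to upgrade the specialization argument to handle multiplicities you will find yourself rebuilding the splitting $C_*(\bm t)=C'_*(\bm t)\cup C''_*(\bm t)$ over $\CC(\bm t)$, at which point you have essentially reproduced the paper's direct argument anyway.
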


\subsection{Proof of Theorem~\ref{theorem:polyvstorsion}}

We begin by introducing the complexes needed in the proof and some
notation. 

Let $C_* = C_*(M; \sllrho)$ and $C_*(t) = C_*(M; \sllrhot)$.  We
define a pair of complexes $(C'_*, C'_*(t))$ as follows.  The complex
$C'_*$ is defined as a subchain complex of $C_*$ which is a lift of
the homology group $H^\rho_*(M) = H_*(M; \sllrho)$.  This is a
``degenerated complex'' in the sense that the boundary operators are
all zero.  More precisely, $C'_3 = C'_0 = 0$ for conventions, $C'_2$
is spanned (over $\CC$) by 
$\{ P^\rho_\ell \otimes T^2_\ell \,|\, 1 \myleq \ell \myleq b \}$, 
and $C'_1$ is spanned (over $\CC$)
by 
$\{P^\rho_\ell \otimes \lambda_\ell \,|\, 1 \myleq \ell \myleq b \}$.  
Similarly as $C_*(t)$, we define $C'_*(t)$ by changing the coefficient from $\sll$
to $\sllt = \CC(t) \otimes \sll$, in particular
$C'_1(t)$ is spanned by 
$\{1 \otimes P^\rho_\ell \otimes \lambda_\ell \,|\, 1 \myleq \ell \myleq b \}$ and $C'_2(t)$ is spanned by 
$\{1 \otimes P^\rho_\ell \otimes T^2_\ell \,|\, 1 \myleq \ell \myleq b \}$.  
Observe that
$C'_*(t)$ is a subchain complex of $C_*(t)$.  More precisely, one has:
\begin{equation}\label{eqn:c'(t)}
  C'_*(t) = 
    0 \to 
    C'_3(t) \to C'_2(t) \xrightarrow{\partial'} 
    C'_1(t) \to C'_0(t) \to
    0
\end{equation}
where the boundary operator $\partial'$ works as follows:
$$
  \partial' \colon (1 \otimes P^\rho_\ell) \otimes T^2_\ell
  \mapsto (t^{a_i} - 1) \cdot (1 \otimes P^\rho_\ell) \otimes
    \lambda_\ell.
$$ 
Finally, we define $C''_*$ as the quotient complex $C_* / C'_*$ and
$C''_*(t) = C_*(t) / C'_*(t)$.  Hence we have the two following exact
sequence of complexes:
\begin{equation}\label{eqn:compsanst}
  0 \to C'_* \to C_* \to C''_* \to 0.
\end{equation}
\begin{equation}\label{eqn:comp}
  0 \to C'_*(t) \to C_*(t) \to C''_*(t) \to 0.
\end{equation}

As we already observe, the complexes $C'_*$, $C_*$ are not acyclic and
we completely know their homology groups.  The homology of $C''_*$ is
given in the following claim.
\begin{lemma}
  The complex $C''_*$ is acyclic.
\end{lemma}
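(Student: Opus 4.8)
The plan is to obtain the acyclicity of $C''_*$ directly from the long exact sequence in homology associated to the short exact sequence~(\ref{eqn:compsanst}), once the inclusion-induced map $H_*(C'_*) \to H_*(C_*)$ is identified.

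First I would record the homology of the two outer complexes. Since the boundary operators of $C'_*$ all vanish, $H_i(C'_*) = C'_i$ for every $i$; thus $H_1(C'_*)$ is spanned by the $P^\rho_\ell \otimes \lambda_\ell$, $H_2(C'_*)$ is spanned by the $P^\rho_\ell \otimes T^2_\ell$ ($1 \myleq \ell \myleq b$), and $H_i(C'_*) = 0$ for $i \neq 1,2$. Now $C'_*$ was built as a subcomplex of $C_* = C_*(M;\sllrho)$ consisting of genuine cycles chosen so as to lift a basis of $H_*(M;\sllrho)$, and by the remark recalled above following the definition of $\bm{\lambda}$-regularity (see also \cite{Porti:1997}), for a $\bm{\lambda}$-regular representation $\rho$ the classes $\bbrack{P^\rho_\ell \otimes \lambda_\ell}$ form a basis of $H_1(M;\sllrho)$ and the classes $\bbrack{P^\rho_\ell \otimes T^2_\ell}$ form a basis of $H_2(M;\sllrho)$. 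Hence the inclusion $C'_* \hookrightarrow C_*$ induces isomorphisms $H_1(C'_*) \xrightarrow{\simeq} H_1(C_*)$ and $H_2(C'_*) \xrightarrow{\simeq} H_2(C_*)$.

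Next I would dispose of the extreme degrees. Because $\rho$ is irreducible there is no nonzero $Ad\circ\rho$-invariant vector in $\sll$, so $H_0(C_*) = H_0(M;\sllrho) = 0$; and since $M$ has nonempty toroidal boundary it has the homotopy type of a $2$--complex, so $H_i(C_*) = 0$ for $i \geq 3$. Together with $H_0(C'_*) = H_3(C'_*) = 0$ and $C_i = 0$ for $i>3$, this shows that $H_i(C'_*) \to H_i(C_*)$ is an isomorphism for \emph{all} $i$.

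Finally I would feed this into the long exact homology sequence of~(\ref{eqn:compsanst}),
$$\cdots \to H_i(C'_*) \to H_i(C_*) \to H_i(C''_*) \to H_{i-1}(C'_*) \to H_{i-1}(C_*) \to \cdots,$$
in which every map $H_*(C'_*) \to H_*(C_*)$ is an isomorphism; exactness then forces $H_i(C''_*) = 0$ for all $i$, so $C''_*$ is acyclic. There is no genuine obstacle in this argument; the only point that requires care is the identification of the inclusion-induced map, i.e. checking that the chosen generators $P^\rho_\ell \otimes \lambda_\ell$ and $P^\rho_\ell \otimes T^2_\ell$ really represent bases of $H_1(M;\sllrho)$ and $H_2(M;\sllrho)$ — which is precisely Porti's result invoked above — together with the vanishing of $H_0$ and $H_3$ from irreducibility and the boundary hypothesis.
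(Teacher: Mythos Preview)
Your argument is correct and follows exactly the same route as the paper: use the long exact homology sequence of~(\ref{eqn:compsanst}) together with the fact that $H_i(C'_*) \to H_i(C_*)$ is an isomorphism for all $i$, which holds by the very construction of $C'_*$ as a lift of $H_*(M;\sllrho)$. Your additional verifications (that $H_0$ and $H_3$ of $C_*$ vanish from irreducibility and the boundary hypothesis) are correct but not strictly needed, since the paper's definition of $C'_*$ already ensures $C'_i \simeq H_i(C_*)$ in every degree.
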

\begin{proof}[Proof of the claim]
  Write down the long exact sequence in homology associated to the
  short exact sequence~(\ref{eqn:compsanst}) :
  \[
   \cdots \to H_i(C'_*) \to H_i(C_*) \to
    H_i(C''_*) \to H_{i-1}(C'_*) \to H_{i-1}(C_*) \to
    \cdots.
  \]
  Observe that $H_i(C'_*) \simeq H_i(C_*)$ by definition of $C'_*$.
  Thus $H_i(C''_*) = 0$.
\end{proof}
\begin{remark}
  We think of $C''_*$ as the original complex $C_*$ in which we have
  ``killed'' the homology.
\end{remark}

The homology groups of the complexes $C'_*(t)$, $C_*(t)$ and
$C''_*(t)$ are given in the following claim.
\begin{lemma}\label{lemma:acyclicity_cpxes}
  The complexes $C'_*(t)$, $C_*(t)$ and $C''_*(t)$ are acyclic.
\end{lemma}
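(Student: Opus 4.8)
The plan is to dispatch the three complexes one after another, the pivot being that $C'_*(t)$ is a two--term complex with an \emph{invertible} differential over the field $\CC(t)$, so that acyclicity propagates from $C_*(t)$ to $C''_*(t)$ through the short exact sequence~\eqref{eqn:comp}.

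First I would observe that the acyclicity of $C_*(t) = C_*(M;\sllrhot)$ is nothing but Proposition~\ref{prop:acyclic} specialized to a single variable ($n=1$): under assumptions $(A_M)$, $(A_\varphi)$ and $(A_\rho)$ one has $H_*(M;\sllrhot) = 0$. Next I would treat $C'_*(t)$. By construction $C'_0(t) = C'_3(t) = 0$, so the complex is concentrated in degrees $1$ and $2$ and its only possibly nonzero homology is $H_1$ and $H_2$. In the bases $\{1\otimes P^\rho_\ell\otimes T^2_\ell\}_{1\myleq\ell\myleq b}$ of $C'_2(t)$ and $\{1\otimes P^\rho_\ell\otimes\lambda_\ell\}_{1\myleq\ell\myleq b}$ of $C'_1(t)$, the boundary operator $\partial'$ is the diagonal matrix $\mathrm{diag}(t^{a_1}-1,\dots,t^{a_b}-1)$. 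Since each $a_\ell$ is a positive integer, each $t^{a_\ell}-1$ is a nonzero element of the field $\CC(t)$, hence a unit; therefore $\partial'\colon C'_2(t)\to C'_1(t)$ is an isomorphism of $\CC(t)$--vector spaces and $H_*(C'_*(t)) = 0$. (That $C'_*(t)$ is genuinely a subcomplex of $C_*(t)$ with this differential is already part of the construction preceding the lemma: $P^\rho_\ell\otimes\lambda_\ell$ is a cycle of $C_*(M;\sllrho)$, and the cellular boundary of the lifted torus cell, after applying $\otAd{\varphi}{\rho}$, acquires the factor $\varphi(\mu_\ell)-1 = t^{a_\ell}-1$ on the longitude and the factor $\varphi(\lambda_\ell)-1 = 0$ on the meridian.)

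Finally, for $C''_*(t)$ I would feed the short exact sequence of complexes~\eqref{eqn:comp} into the associated long exact sequence in homology,
\[
\cdots\longrightarrow H_i(C_*(t))\longrightarrow H_i(C''_*(t))\longrightarrow H_{i-1}(C'_*(t))\longrightarrow\cdots,
\]
and note that both outer terms vanish for every $i$ by the two previous steps; hence $H_i(C''_*(t)) = 0$ for all $i$, i.e. $C''_*(t)$ is acyclic.

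There is no serious obstacle here; the only point that deserves a line of care is the verification that $C'_*(t)\subset C_*(t)$ with the stated differential, which is the short boundary computation indicated above, together with the observation that it is precisely the passage from $\ZZ$-- (or $\CC[t,t^{-1}]$--) coefficients to $\CC(t)$--coefficients that makes $t^{a_\ell}-1$ invertible --- over $\CC[t,t^{-1}]$ it is not, which is exactly why the analogous integral/Laurent complex is \emph{not} acyclic and instead carries the homology of $M$.
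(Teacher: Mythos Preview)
Your proof is correct and follows essentially the same route as the paper: $C_*(t)$ is acyclic by Proposition~\ref{prop:acyclic}, $C'_*(t)$ is acyclic because $\partial'$ is invertible over $\CC(t)$, and $C''_*(t)$ is then acyclic via the long exact sequence of~\eqref{eqn:comp}. The paper phrases the last step as ``quotient of two acyclic complexes'', which amounts to the same long exact sequence argument you wrote out; your added explicitness (the diagonal form of $\partial'$ and the remark on $\CC(t)$ versus $\CC[t,t^{-1}]$) is fine and does not depart from the paper's line.
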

\begin{proof}[Proof of the claim]
  According to Proposition~\ref{prop:acyclic}, $C_*(t) = C_*(M;
  \sllrhot)$ is acyclic.  One can observe that the map $\partial'$ is
  invertible, thus $C'_*(t)$ is acyclic.  And finally $C''_*(t) =
  C_*(t) / C'_*(t)$ is also acyclic (as a quotient of two acyclic
  complexes).
\end{proof}

We endow the complexes in Sequence~(\ref{eqn:comp}) with
\emph{compatible bases} in order to compute the torsions.  From the
definition, $C'_*$ is endowed with a distinguished basis
$\cbasis[c']{*}$ given by
\[
\left\{ 
  P^\rho_1 \otimes T^2_1, 
    \ldots, 
  P^\rho_b \otimes T^2_b, 
  P^\rho_1 \otimes \lambda_1, 
    \ldots,
  P^\rho_b \otimes \lambda_b
\right\}
\]
and we equip $C'_*(t)$ with the corresponding distinguished basis $1
\otimes \cbasis[c']{*}$ improperly denoted again for simplicity
$\cbasis[c']{*}$.  Similarly, we endowed the quotient $C''_* = C_* /
C'_*$ with a distinguished basis $\cbasis[c'']{*}$, and the same for
$C''_*(t)$.  Using the exact sequence~(\ref{eqn:comp}), we finally
endowed $C_*(t)$ with the compatible basis $\cbasis[c']{*} \cup
\cbasis[c'']{*}$ obtained by lifting and concatenation (here again our
notation is improper).  Note that this last basis is different from
the distinguished geometric basis $\cbasis{*}$ of $C_*(t) =
C_*(M; \sllrhot)$ described in
Subsection~\ref{section:def_polynomial_torsion}.

From now on, we write $\Tor{C_*(t)}{\cbasis{*}}{\emptyset}$
(resp. $\Tor{C'_*(t)}{\cbasis[c']{*}}{\emptyset}$,
$\Tor{C''_*(t)}{\cbasis[c'']{*}}{\emptyset}$) for the Reidemeister
torsion of $C_*(t)$ (resp. $C'_*(t)$, $C''_*(t)$) computed in the
basis $\cbasis[c']{*} \cup \cbasis[c'']{*}$ (resp. $\cbasis[c']{*}$,
$\cbasis[c'']{*}$); whereas we write $\Tor{C_*(M;
  \sllrhot)}{\cbasis{*}}{\emptyset}$ for the torsion of $C_*(t) =
C_*(M; \sllrhot)$ but computed in the geometric basis $\cbasis{*}$.
Using the basis change formula (see~\cite[Proposition
0.2]{Porti:1997}), we have:
\[
\Tor{C_*(M; \sllrhot)}{\cbasis{*}}{\emptyset} =
\Tor{C_*(t)}{\cbasis[c']{*} \cup \cbasis[c'']{*}}{\emptyset} \cdot
\prod_i [ \cbasis{i}/\cbasis[c']{i} \cup \cbasis[c'']{i} ]^{(-1)^{i}}.
\]

Hence from the definition of the polynomial torsion, we have:
\begin{align}
  \PolyTors{M}{\otAd{\varphi}{\rho}}(t) 
  &= \tau_0 \cdot
  \Tor{C_*(M; \sllrhot)}{\cbasis{*}}{\emptyset} \nonumber \\
  &= \prod_i [ \cbasis{i} / \cbasis[c']{i} \cup \cbasis[c'']{i}
  ]^{(-1)^{i}} \cdot \tau_0 \cdot \Tor{C_*(t)}{\cbasis[c']{*} \cup
    \cbasis[c'']{*}}{\emptyset}    \label{eqn:DE}
\end{align}
where $\tau_0 = \mathrm{sgn} \left(
  \Tor{C_*(M;\IR)}{\cbasis{*}_\IR}{\hbasis{*}_\IR} \right)$, see
Section~\ref{section:def_polynomial_torsion}.

Applying the Multiplicativity Lemma to the exact
sequence in Equation~(\ref{eqn:comp}) we get:
\begin{equation}\label{eqn:ML}
  \Tor{C_*(t)}{\cbasis[c']{*} \cup \cbasis[c'']{*}}{\emptyset} 
  = (-1)^\alpha \cdot 
  \Tor{C'_*(t)}{\cbasis[c']{*}}{\emptyset} \cdot
  \Tor{C''_*(t)}{\cbasis[c'']{*}}{\emptyset}
\end{equation}
where $\alpha \equiv \sum_j \alpha_{j-1}(C'_*(t)) \alpha_j(C''_*(t))
\mod 2$.  We first compute the sign in Equation~(\ref{eqn:ML}):
\begin{lemma}\label{lemma:sgn}
  The sign $(-1)^\alpha$ in Equation~(\ref{eqn:ML}) is given by $\alpha \equiv {b \cdot \dim C_3} \mod 2$.
\end{lemma}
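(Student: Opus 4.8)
The plan is to unwind the combinatorial quantity $\alpha$ coming from the Multiplicativity Lemma, show that at most one term of the defining sum survives modulo $2$, and then use $\chi(M)=0$ to rewrite that surviving term in terms of $\dim C_3$. First I would record the dimensions attached to $C'_*(t)$. By construction $C'_0(t)=C'_3(t)=0$, while $C'_1(t)$ is spanned by the $b$ vectors $1\otimes P^\rho_\ell\otimes\lambda_\ell$ and $C'_2(t)$ by the $b$ vectors $1\otimes P^\rho_\ell\otimes T^2_\ell$; hence, writing $\alpha_q(D_*)=\sum_{k=0}^{q}\dim_{\CC(t)}D_k$, one has $\alpha_{-1}(C'_*(t))=\alpha_0(C'_*(t))=0$, $\alpha_1(C'_*(t))=b$, and $\alpha_j(C'_*(t))=2b$ for every $j\ge 2$.

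Next I would substitute these into the definition $\alpha\equiv\sum_{j\ge 0}\alpha_{j-1}(C'_*(t))\,\alpha_j(C''_*(t))\pmod 2$ from Equation~(\ref{eqn:ML}). The terms with $j=0,1$ vanish because $\alpha_{-1}(C'_*(t))=\alpha_0(C'_*(t))=0$, and the terms with $j\ge 3$ vanish modulo $2$ because $\alpha_{j-1}(C'_*(t))=2b$; thus only $j=2$ contributes and $\alpha\equiv b\cdot\alpha_2(C''_*(t))\pmod 2$. Since $C''_*(t)=C_*(t)/C'_*(t)$, we have $\dim C''_k(t)=\dim C_k(t)-\dim C'_k(t)$, so $\alpha_2(C''_*(t))=\bigl(\dim C_0(t)+\dim C_1(t)+\dim C_2(t)\bigr)-2b\equiv\dim C_0+\dim C_1+\dim C_2\pmod 2$, where $\dim C_k$ abbreviates $\dim_{\CC(t)}C_k(M;\sllrhot)=3\,n_k$ with $n_k$ the number of $k$-cells of $M$.

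Finally I would invoke $\chi(M)=0$: whether $M$ is closed hyperbolic or has toroidal boundary, $n_0-n_1+n_2-n_3=0$, so reducing modulo $2$ gives $n_0+n_1+n_2+n_3\equiv 0$, whence $\sum_{k=0}^{3}\dim C_k=3\sum_k n_k\equiv 0\pmod 2$ as well. Therefore $\dim C_0+\dim C_1+\dim C_2\equiv\dim C_3\pmod 2$, and combining with the previous paragraph gives $\alpha\equiv b\cdot\dim C_3\pmod 2$, which is the assertion. The argument is pure parity bookkeeping; the only points needing care are tracking the index shift so that exactly the $j=2$ term of the sum survives, and using $\chi(M)=0$ at the end to trade the three lower-degree dimensions for $\dim C_3$.
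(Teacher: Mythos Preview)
Your proof is correct and follows essentially the same approach as the paper's: compute the $\alpha_j(C'_*(t))$, observe that modulo $2$ only the $j=2$ summand survives, and then use $\chi(M)=0$ to trade $\dim C_0+\dim C_1+\dim C_2$ for $\dim C_3$. The only cosmetic difference is that the paper records $\dim C'_1(t)=\dim C'_2(t)=3b$ (apparently a slip, since each $C'_k$ is spanned by $b$ vectors as you note), but since $3b\equiv b\pmod 2$ this does not affect the argument.
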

\begin{proof}[Proof of Lemma~\ref{lemma:sgn}]
  It is easy to see from the definition that:
  \[
  \dim C_*''(t) = \dim C_*(t) - \dim C_*'(t).
  \]
  Moreover, $\dim C'_0(t) = 0 = \dim C'_3(t)$ and $\dim C'_1(t) = \dim
  C'_2(t) = 3b$, where $b$ is the number of boundary components of
  $M$.  Thus, reduced modulo 2, $\alpha_j(C_*'(t))$ are all zero
  except $\alpha_1(C_*'(t)) \equiv b \mod 2$.  As a consequence,
  \begin{align*}
    \alpha
    &\equiv \sum_j \alpha_{j-1}(C'_*(t)) \alpha_j(C''_*(t)) \\
    &\equiv \alpha_1(C'_*(t)) \alpha_2(C''_*(t)) \\
    &\equiv b \cdot (\dim C_0 + \dim C_1 + \dim C_2) \mod 2.
  \end{align*}
  Since the Euler characteristic of $M$ is equal to zero, we have that $\alpha \equiv b \cdot \dim C_3 \mod 2$.
\end{proof}

Next we compute the torsion of $C'_*(t)$ (with respect to the basis
$c'_*$).
\begin{lemma}\label{lemma:torsC'}
  We have:
  \begin{equation}\label{eqn:torsC'}
    \Tor{C'_*(t)}{\cbasis[c']{*}}{\emptyset} 
    = \prod_{\ell=1}^b (t^{a_\ell} - 1).
  \end{equation}
\end{lemma}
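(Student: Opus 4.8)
The plan is to compute the torsion of the ``degenerate'' complex $C'_*(t)$ directly from the definition~(\ref{def:RTorsion}), since this complex is extremely simple: it is concentrated in degrees $1$ and $2$, with $C'_3(t) = C'_0(t) = 0$, and both $C'_1(t)$ and $C'_2(t)$ are $b$-dimensional $\CC(t)$-vector spaces equipped with the distinguished bases $\cbasis[c']{1} = \{1 \otimes P^\rho_\ell \otimes \lambda_\ell\}_{\ell=1}^b$ and $\cbasis[c']{2} = \{1 \otimes P^\rho_\ell \otimes T^2_\ell\}_{\ell=1}^b$. By Lemma~\ref{lemma:acyclicity_cpxes} the complex is acyclic, so $|C'_*(t)| = 0$ and there are no homology bases to track; the torsion is just the alternating product of determinants of the transition matrices arising from the boundary map $\partial'$.

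First I would record the matrix of $\partial' \colon C'_2(t) \to C'_1(t)$ in the given bases. By the formula displayed just before~(\ref{eqn:c'(t)}), $\partial'$ sends $1 \otimes P^\rho_\ell \otimes T^2_\ell$ to $(t^{a_\ell} - 1)\cdot 1 \otimes P^\rho_\ell \otimes \lambda_\ell$, so its matrix is the diagonal matrix $\diag(t^{a_1} - 1, \ldots, t^{a_b} - 1)$. (Here I am using the notation $a_\ell$ for the exponent, matching the statement; the displayed formula writes $a_i$, but by $(A_\varphi)$ these are the positive integers with $\varphi(\pi_1(T^2_\ell)) = \langle t^{a_\ell}\rangle$.) Since $\partial'$ is an isomorphism, one takes $\bbasis{2} = \cbasis[c']{2}$, so that $d_2(\bbasis{2}) = \partial'(\cbasis[c']{2})$ is the basis of $B_1 = C'_1(t)$ given by $\{(t^{a_\ell}-1)\cdot 1\otimes P^\rho_\ell\otimes\lambda_\ell\}$, and $\bbasis{1} = \emptyset$, $\bbasis{3} = \emptyset$. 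The only nonzero transition determinants are then $[d_2(\bbasis{2})/\cbasis[c']{1}] = \prod_{\ell=1}^b (t^{a_\ell}-1)$ in degree $1$ and $[\bbasis{2}/\cbasis[c']{2}] = 1$ in degree $2$.

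Plugging into~(\ref{def:RTorsion}) with $|C'_*(t)| = 0$, the torsion is
\[
\Tor{C'_*(t)}{\cbasis[c']{*}}{\emptyset}
= [d_2(\bbasis{2})/\cbasis[c']{1}]^{(-1)^{1+1}} \cdot [\bbasis{2}/\cbasis[c']{2}]^{(-1)^{2+1}}
= \prod_{\ell=1}^b (t^{a_\ell} - 1),
\]
which is the claimed identity~(\ref{eqn:torsC'}). The argument is entirely routine linear algebra; the only point requiring a little care is bookkeeping of the exponents $(-1)^{i+1}$ and checking that the degree-$2$ contribution is trivial, so that no spurious inverse or sign appears—hence I would double-check that the convention in~(\ref{def:RTorsion}) places the $B_{i-1}$-basis $d_{i+1}(\bbasis{i+1})$ in degree $i$ (so the factor $\prod(t^{a_\ell}-1)$ sits in degree $1$ with exponent $(-1)^{2} = +1$) rather than in degree $i+1$. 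With the standard convention this gives exactly the stated product with no sign, matching the downstream use in Equation~(\ref{eqn:ML}) and the proof of Theorem~\ref{theorem:polyvstorsion}.
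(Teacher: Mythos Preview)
Your proof is correct and takes essentially the same approach as the paper: the paper simply observes that $\Tor{C'_*(t)}{\cbasis[c']{*}}{\emptyset} = \det \partial' = \prod_{\ell=1}^b (t^{a_\ell}-1)$, and you have spelled out in detail why the torsion of this two-term acyclic complex reduces to $\det\partial'$ with the correct sign. Your careful bookkeeping of the exponents $(-1)^{i+1}$ and the check that $|C'_*(t)|=0$ are exactly what justifies the paper's one-line computation.
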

\begin{proof}[Proof of Lemma~\ref{lemma:torsC'}]
  It is easy to observe that (see Complex~(\ref{eqn:c'(t)})):
  \[
  \Tor{C'_*(t)}{\cbasis[c']{*}}{\emptyset}
  = \det \partial' = \prod_{\ell=1}^b (t^{a_\ell} - 1).
  \]
\end{proof}

If we substitute Equation~(\ref{eqn:torsC'}) into Equation~(\ref{eqn:ML}), 
we obtain:
\begin{equation}\label{eqn:D}
  \Tor{C''_*(t)}{\cbasis[c'']{*}}{\emptyset}
  = (-1)^{\alpha}\frac{
    \Tor{C_*(t)}{\cbasis[c']{*} \cup \cbasis[c'']{*}}{\emptyset}
  }{
    \prod_{\ell=1}^b (t^{a_\ell} - 1)
  }.
\end{equation}

Now we consider the limit of Equation~(\ref{eqn:D}) as $t$ goes to 1 and
prove the following lemma which gives a relation between the torsion
of $C''_*(t)$ and the non--abelian torsion of $M$ in the adjoint
representation.
\begin{lemma}\label{lemma:limtors}
  Let $\displaystyle{\delta_0 = {\tau_0} \cdot \prod_{i \mygeq 0} [\cbasis{i} /
    \cbasis[c']{i} \cup \cbasis[c'']{i}]^{(-1)^{i+1}} }$.  We have the following identity
  \begin{equation}\label{eqn:limtors}
    \lim_{t \to 1} \Tor{C''_*(t)}{\cbasis[c'']{*}}{\emptyset} = 
    (-1)^{b+\alpha} \cdot \delta_0 \cdot \RTors{M}{\smallbm{\lambda}}(\rho).
  \end{equation}
\end{lemma}

\begin{proof}[Proof of Lemma~\ref{lemma:limtors}]

  We begin the proof by some considerations on the complexes
  $C''_*(t)$ and $C_*(t)$ and their respective ``limits'' $C''_*$ and
  $C_*$ when $t$ goes to $1$.

  Return to the definition of the complex $C''_*(t) = C_*(t) /
  C'_*(t)$.  If $t$ goes to 1, then the acyclic complex $C'_*(t)$
  changes into the ``degenerated'' complex $C'_*$ in the sense that
  the map $\partial'$ becomes the zero map.  Hence $C'_1 \simeq
  H_1(C_*)$ and $C'_2 \simeq H_2(C_*)$.  Like $C''_*(t)$, the complex
  $C''_*$ is acyclic (see Lemma~\ref{lemma:acyclicity_cpxes}); more
  precisely if $t$ goes to 1, then we get in fact a complex related to
  the complex $C_*(M; \sllrho)$ but without homology, together with a
  distinguished basis different form the geometric one.  Repeat again
  that the twisted homology groups $H_*^\rho(M) = H_*(M; \sllrho)$ are
  endowed with the following distinguished bases (coming from the ones
  of $C'_*$, in fact it is not exactly a basis but a lift of the basis
  into $C_*$):
  \begin{enumerate}
  \item $H^\rho_1(M)$ is endowed with 
    $\hbasis{1} = \cbasis[c']{1} =
    \left\{ 
      \bbrack{P^\rho_1 \otimes \lambda_1}, 
        \ldots,
      \bbrack{P^\rho_b \otimes \lambda_b} 
    \right\}$,
  \item $H^\rho_2(M)$ is endowed with $\hbasis{2} = \cbasis[c']{2} =
    \left\{ 
       \bbrack{P^\rho_1 \otimes T^2_1}, 
         \ldots, 
       \bbrack{P^\rho_b \otimes T^2_b} 
    \right\}$.
  \end{enumerate}
  
  With obvious notation, choose a set of vectors $\bbasis[b'']{i+1}$ in
  $C''_{i+1}$ such that $\partial''_{i+1}(\bbasis[b'']{i+1})$ is a basis
  of $B''_i = \im(\partial''_{i+1}\colon C''_{i+1} \to C''_i)$.

  Observe that the set of vectors $1 \otimes \bbasis[b'']{i+1}$ in
  $C''_{i+1}(t)$ generates a subspace on which the boundary operator
  $\partial_{i+1}\colon C_{i+1}(t) \to C_i(t)$ is injective.
 
  With $\bbasis[b'']{3} = \cbasis[c'']{3}$ in mind, the torsion of $C''_*(t)$ (with respect to the basis
  $\cbasis[c'']{*}$) can be computed as follows:
  \begin{align*}
    \Tor{C''_*(t)}{\cbasis[c'']{*}}{\emptyset} &= \prod_{i=0}^{2}
    \left[
      \partial''_{i+1} (1 \otimes \bbasis[b'']{i+1})\, 1 \otimes
      \bbasis[b'']{i} / \cbasis[c'']{i}
    \right]^{(-1)^{i+1}} \\
    &= \prod_{i=0}^2 \left[ \cbasis[c']{i} \cup \partial_{i+1}
      (\bbasis[b]{i+1}) \bbasis[b]{i} / \cbasis[c']{i} \cup
      \cbasis[c'']{i} \right]^{(-1)^{i+1}}.
  \end{align*}
  Here $\bbasis[b]{i}$ denotes a lift of $1 \otimes \bbasis[b'']{i}$
  to $C_*(t)$.  As a result, we can rewrite 
  \begin{align}\label{eqn:torsC''}
    \Tor{C''_*(t)}{\cbasis[c'']{*}}{\emptyset}
    &= 
    \left[ 
      \cbasis[c']{2} \partial_3(\bbasis{3}) \bbasis{2} / \cbasis[c']{2} \cup \cbasis[c'']{2} 
    \right]^{-1} \\
   & \qquad
    \cdot
    \left[ 
      \cbasis[c']{1} \partial_2(\bbasis{2}) \bbasis{1} / 
      \cbasis[c']{1} \cup \cbasis[c'']{1} 
    \right]
    \cdot
    \left[
        \partial_1( \bbasis{1}) / \cbasis[c'']{0} 
      \right]^{-1}. \nonumber
  \end{align}
  We want now to relate Equation~(\ref{eqn:torsC''}) to an expression
  closer to the torsion of the twisted complex $C_*(M; \sllrho)$. For this we permute the
  vectors of $\partial_2(\bbasis{2})$ and $\partial_1(\bbasis{1})$
  with the ones of $\cbasis[c']{2}$ and $\cbasis[c']{1}$ in one of the
  determinants in Equation~(\ref{eqn:torsC''}). Each set of
  $\cbasis[c']{1}$ and $\cbasis[c']{2}$ consists of $b$ vectors and it
  is easy to observe that $\partial_3(\bbasis{3})$ consists of $\dim
  C_3(t) ( = \dim C_3)$ vectors and $\partial_2(\bbasis{2})$ consists
  of $\rk \partial_2 = \dim C_1(t) - b - \dim C_0(t) \, (\equiv \dim
  C_0 + \dim C_1 + b \mod 2)$ vectors.  Hence the sign arises from the permutation, 
   whose exponent is given by $b(\dim C_0 + \dim C_1 + b) + b
  \dim C_3$.  When we write $\varepsilon = (-1)^{b(\dim C_0 + \dim
    C_1)}$ and $(-1)^\alpha$ for $(-1)^{b \dim C_3}$ as in Lemma~\ref{lemma:sgn}, thus we have:
  \begin{align}\label{eqn:tors1}
    \Tor{C''_*(t)}{\cbasis[c'']{*}}{\emptyset}
    &= (-1)^{b + \alpha} \cdot \varepsilon \cdot
    \left[ 
      \partial_3(\bbasis{3}) \cbasis[c']{2} \bbasis{2} / \cbasis[c']{2} \cup \cbasis[c'']{2} 
    \right]^{-1} \\
    & \qquad \cdot
    \left[ 
        \partial_2(\bbasis{2}) \cbasis[c']{1} \bbasis{1} / 
        \cbasis[c']{1} \cup \cbasis[c'']{1} 
      \right]
      \cdot
      \left[ 
        \partial_1(\bbasis{1}) / \cbasis[c'']{0} 
      \right]^{-1}. \nonumber
        \end{align}
  By making a change of basis in Equation~(\ref{eqn:tors1}), we obtain the
  following expression:
  \begin{align}\label{eqn:tors2}
    \Tor{C''_*(t)}{\cbasis[c'']{*}}{\emptyset}
    &= (-1)^{b+\alpha} \cdot  
      (\, 
      \prod_{i \mygeq 0} \left[ \cbasis{i} / \cbasis[c']{i} \cup \cbasis[c'']{i} \right]^{(-1)^{i+1}} 
      \,)  \nonumber\\
    &\qquad \cdot \varepsilon
      \cdot \left[ \partial_3(\bbasis{3}) \cbasis[c']{2} \bbasis{2} / \cbasis{2} \right]^{-1} 
      \cdot \left[ \partial_2(\bbasis{2}) \cbasis[c']{1} \bbasis{1} / \cbasis{1} \right] 
      \cdot \left[ \partial_1(\bbasis{1}) / \cbasis{0} \right]^{-1}.
  \end{align}
  Moreover, using the definition of the bases $\cbasis[c']{1}$ and
  $\cbasis[c']{2}$, it is easy to observe that
  \begin{align}
    \lefteqn{ \lim_{t\to 1} \varepsilon \cdot
               \left[ \partial_3(\bbasis{3}) \cbasis[c']{2} \bbasis{2} / \cbasis{2} \right]^{-1} 
               \cdot 
               \left[ \partial_2(\bbasis{2}) \cbasis[c']{1} \bbasis{1} / \cbasis{1} \right] 
               \cdot \left[\partial_1(\bbasis{1}) / \cbasis{0} \right]^{-1}} 
    &  \nonumber \\
    &= \varepsilon \cdot
       \left[ \partial_3(\bbasis{3}) \hbasis[\tilde{h}]{2} \bbasis{2} / \cbasis{2} \right]^{-1} 
       \cdot 
       \left[ \partial_2(\bbasis{2}) \hbasis[\tilde{h}]{1} \bbasis{1} / \cbasis{1} \right] 
       \cdot 
       \left[ \partial_1(\bbasis{1}) / \cbasis{0} \right]^{-1} \label{eqn:tors3}\\
    &= \Tor{C_*}{\cbasis{*}}{\hbasis{*}}. \nonumber
  \end{align}
  The last step in Equations~$(\ref{eqn:tors3})$ is due to the fact that 
  \[
  (-1)^{|C_*|} 
    = (-1)^{\alpha_1(C_*)\beta_1(C_*)} 
    = (-1)^{(\dim C_1 + \dim C_0) b} 
    = \varepsilon.
  \]
  Hence, combining Equations~(\ref{eqn:tors2}) and~(\ref{eqn:tors3}), we
  obtain
  \[
  \lim_{t \to 1} \Tor{C''_*(t)}{\cbasis[c'']{*}}{\emptyset} 
  = (-1)^{b+\alpha}
  (\,
  \prod_{i \mygeq 0} 
    [ \cbasis{i}/\cbasis[c']{i} \cup \cbasis[c'']{i}]^{(-1)^{i+1}} 
  \,)
  \cdot \Tor{C_*}{\cbasis{*}}{\hbasis{*}}
  \]
  which is exactly the desired equality because
  $\RTors{M}{\smallbm{\lambda}}(\rho) = \tau_0 \cdot
  \Tor{C_*}{\cbasis{*}}{\hbasis{*}}$.
\end{proof}
  
We finish the proof by combining Equation~(\ref{eqn:D})
and~(\ref{eqn:limtors}) and using the definition of the polynomial
torsion $\PolyTors{M}{\otAd{\varphi}{\rho}}(t)$ given in
Equation~(\ref{eqn:DE}).

\subsection{Example: the non-abelian Reidemeister torsion of the Whitehead
  link exterior}\label{ex:Whitehead}

We apply Corollary~\ref{cor:DerFormula_one _variable} to the
polynomial torsion $\PolyTors{E_L}{\otAd{\varphi}{\rho}}(t_1, t_2)$ of
the Whitehead link exterior $E_L$ (see Fig.~\ref{fig:WhiteheadLinkMeridians}).  First we substitute $t$ into both
variables $t_1$ and $t_2$ in Equation~$(\ref{eqn:PolyTorsion_WL})$.  The
resulting homomorphism $\varphi_{(1,1)}$ corresponds to the induced
homomorphism $\pi_1(E_L) \to \pi_1(S^1)$ by the fibered structure of
$E_L$.  The dual surface is the fiber and is also a Seifert surface
for $L$.  Thus:
\[
\PolyTors{E_L}{\otAd{\varphi}{\rho}}(t,t) = \tau_0 \cdot
\frac{(t-1)^2}{t^3} \left( -2xyv t^2 + x^2 t(t+1)^2 + y^2 t (t+1)^2 -
  (t+1)^4 \right).
\]

Multiplying $(t-1)^{-2}$ and taking the limit for $t$ goes to $1$, we
obtain the hyperbolic torsion (the non--abelian Reidemeister torsion):
\begin{equation}\label{eqn:Whitehead}
  \RTors{E_L}{\smallbm{\lambda}}(\rho_{x, y, v}) 
  = \tau_0 \cdot 
  \left( 4(x^2+y^2)-16-2xyv \right).
\end{equation}
Since a lift of the holonomy representation is irreducible and the
traces of meridians are $\pm 2$, Equation~$(\ref{eqn:Whitehead})$ is also
valid at $x = \pm 2$.  Moreover the points $(\pm 2, \pm 2, 1 +
\sqrt{-1})$ and $(\pm 2, \pm 2, 1 - \sqrt{-1})$ correspond to lifts of
the holonomy representation and its complex conjugate.  When we
substitute $(\pm 2, \pm 2, 1 + \sqrt{-1})$ and $(\pm 2, \pm 2, 1 -
\sqrt{-1})$ into $(x, y, v)$, we have the following values of the
hyperbolic torsion (the non--abelian Reidemeister torsion) for the Whitehead link exterior and
its holonomy representation $\rho_0$ and the complex conjugate $\bar
\rho_0$
\[
\left\{\RTors{E_L}{\smallbm{\lambda}}(\rho_0),
\RTors{E_L}{\smallbm{\lambda}}(\bar \rho_0) \right\} = \left\{\tau_0 \cdot 8(1 \mp
\sqrt{-1})\right\}.
\]

\section{$\PolyTors{M}{\otAd{\varphi}{\rho}}$ is a polynomial}
\label{section:poly}

The torsion $\PolyTors{M}{\otAd{\varphi}{\rho}}$ is an element in the
fraction field $\CC(t_1, \ldots. t_n)$ by definition of the
Reidemeister torsion.  But actually, under a technical condition on
the representations $\rho \colon \pi_1(M) \to \SL$ and $\varphi \colon
\pi_1(M) \to \ZZ^n$, this torsion is in fact contained in $\CC[t_1,
\ldots, t_n]$, up to a factor $t_1^{m_1} \cdots t_n^{m_n}$ for some
integers $m_1, \ldots, m_n$.  This result is obtained by a cut and
paste argument by using the Multiplicativity Lemma for torsions.
This divisibility problem for link exteriors in $S^3$ has been also investigated by
Kitano and Morifuji~\cite{KitanoMorifuji05:divisibility} and Wada~\cite{Wada94}.

\subsection{Dual surfaces of $\varphi$ with rank one}\label{subsec:dual}

Using the universal coefficient theorem, a homomorphism $\varphi
\colon \pi_1(M) \to \ZZ$ can be regarded as a cohomology class in
$H^1(M, \ZZ)$, and its Poincar\'e dual $PD(\varphi)$ lies in
$H_2(M;\ZZ)$. Each representative of $PD(\varphi)$ consists of proper embedded
surfaces, \ie, embedded surfaces whose boundary is contained in $\bord
M$.  By Turaev~\cite{turaev:homol_estim}, we can choose proper
embedded surfaces satisfying that the complement $M \setminus S$ of $S$ in $M$ is connected
as a representative of $PD(\varphi)$.  We let $S_\varphi$ denote such 
representative surfaces.

In the case of the Reidemeister torsion with multivariable $(t_1,
\ldots, t_n)$, if we substitute $t_\ell = t^{a_\ell}$ for all $\ell$, then we have the homomorphism $\varphi_{(a_1, \ldots, a_n)}
\colon \pi_1(M) \to \ZZ$.  Moreover if $(a_1, \ldots,
a_n)$ are relatively prime positive integers then the composition $\varphi_{(a_1, \ldots, a_n)}$ satisfies assumption
$(A_\varphi)$.  When we regard $\varphi_{(a_1, \ldots, a_n)}$ as an
element in $H^1(M;\ZZ)$, we let $S_{(a_1, \ldots, a_n)}$ denote a
representative of $PD(\varphi_{(a_1, \ldots, a_n)})$ such that $M
\setminus S_{(a_1, \ldots, a_n)}$ is connected.

\subsection{A sufficient condition to be a polynomial}

Under some additional assumptions for the $\SL$-representation $\rho$
and dual surfaces determined by $\varphi$, we prove that the Reidemeister torsion
$\PolyTors{M}{\otAd{\varphi}{\rho}}$ is a polynomial.

\begin{theorem}\label{theorem:poly}

  Let $M$ be a hyperbolic three--manifold with
  tori boundary which satisfy $(A_M)$.  Let $\varphi$ be a surjective
  homomorphism $\pi_1(M) \to \ZZ^n$ which satisfies $(A_\varphi)$ and
  $\rho$ be an $\SL$-representation of $\pi_1(M)$ satisfying
  $(A_\rho)$.
  Let $S_\varphi = \cup_\ell S_\ell$ be the dual surfaces corresponding to
  $\varphi$.

  \begin{enumerate}
  \item Suppose that $n=1$. If the restriction $\rho|_{\pi_1(S_\ell)}$ of
    $\rho$ is non--abelian for all $\ell$, then $\PolyTors{M}{\otAd{\varphi}{\rho}}$
    is a polynomial in $t$, up to a factor $t^m$, $m \in \ZZ$.
  \item Suppose that $n \mygeq 2$. If for any natural number $N$ there
    exists relatively prime integers $(a_1, \ldots, a_n)$ such that
    $|a_i - a_j| > N$ for all distinct $i, j$ and the restriction
    $\rho|_{\pi_1(S)}$ on every component $S$ of the dual surfaces
    $S_{(a_1, \ldots, a_n)}$ corresponding to $\varphi_{(a_1, \ldots,
      a_n)}$ is non--abelian, then
    $\PolyTors{M}{\otAd{\varphi}{\rho}}$ is a polynomial in $t_1,
    \ldots, t_n$, up to a factor $t_1^{m_1} \cdots t_n^{m_n}$ for
    $m_1, \ldots, m_n \in \ZZ$.
  \end{enumerate}

\end{theorem}

Before proving this theorem, we give some explanations, examples and
counterexamples.

\begin{remark}
  Let $K$ be a hyperbolic knot in $S^3$. Consider $\rho_0 \colon
  \pi_1(E_K) \to \SL$ (a lift of) the holonomy representation and
  $\varphi \colon \pi_1(E_K) \to \ZZ$ the abelianization. The dual
  surface $S_\varphi$ corresponding to $\varphi$ is a Seifert surface of $K$. One can observe that the longitude 
  lies in the second commutator subgroup of $\pi_1(E_K)$. Thus,
  the restriction $\rho_0|_{\bord E_K}$ sends the longitude to a
  parabolic element which is not $\pm \I$.  This means that the
  restriction of $\rho_0$ on $\pi_1(S_\varphi)$ is non--abelian.
\end{remark}

\begin{remark}
  Let $K$ be a fibered knot in $S^3$. Consider an irreducible,
  non--metabelian representation $\rho \colon \pi_1(E_K) \to \SL$ and
  the homomorphism $\varphi \colon \pi_1(E_K) \to \pi_1(S^1) =\ZZ$
  induced by the fibration $E_K \to S^1$. The dual surface
  $S_\varphi$ corresponding to $\varphi$ is the fiber of $K$. One can easily
  observe that $\rho|_{\pi_1(S_\varphi)}$ is non--abelian, because the commutator
  subgroup $[\pi_1(E_K), \pi_1(E_K)]$ of $\pi_1(E_K)$ is $\pi_1(S_\varphi)$.
\end{remark}

% \begin{remark}
%   We have seen that $\PolyTors{M}{\otAd{\varphi}{\rho}}$ can be
%   defined for an abelian representation $\rho$ in
%   Proposition~\ref{prop:torsion_abelian}.  However an abelian representation
%   $\rho$ do not satisfy the condition of Theorem~\ref{theorem:poly}
%   and $\PolyTors{M}{\otAd{\varphi}{\rho}}(t)$ is not a polynomial in
%   many abelian cases.
% \end{remark}

\begin{remark}
For every $\SL$-representation $\rho$ and $\gamma \in \pi_1(M)$,
the linear map $Ad \circ \rho(\gamma)$ always has eigenvalue $1$.
In the one variable case with the ${\rm SL}_3(\CC)$-representation $Ad \circ \rho$, 
we can not use Wada's criterion~\cite[Proposition 8]{Wada94} directly.
\end{remark}

\subsection{Proof of Theorem~\ref{theorem:poly}}

The proof is divided into two main steps: we first consider the case
of a single variable $t$ and next use the naturality property of
$\PolyTors{M}{\otAd{\varphi}{\rho}}$ (see Section~\ref{section:naturality}) to deduce by an algebraic
argument the multivariable case form the one variable case.

\begin{remark}
  In what follows the sign in the Reidemeister torsion will be not
  relevant; so, we will work \emph{up to sign}.  We let
  $\Tor{W}{\cbasis{*}_W}{\emptyset}$ denote 
  the (acyclic) Reidemeister torsion of the manifold $W$ with 
  coefficients in $\sllt$ and computed in 
  the appropriate geometric basis $\cbasis{*}_W$.
\end{remark}

\subsubsection{Proof for one variable}

We cut the manifold $M$ along $S = S_\varphi$ and obtain the following splitting:
$M = N \cup (S \times I)$, where $I= [0,1]$ is the closed unit
interval. The boundaries of $N$ and $S \times I$ are equal and consist
in the disjoint union of two copies of $S$ denoted $S^- = S \times
\{0\}$ and $S^+ = S \times \{1\}$. We apply the Multiplicativity Lemma
to the Mayer--Vietoris sequence associated to this splitting to compute
the Reidemeister torsion of $M$. Our assumptions on $\rho \colon
\pi_1(M) \to \SL$ and $\varphi \colon \pi_1(M) \to \ZZ^n$ say that
every restrictions of $\rho$ to $\pi_1(S_\ell)$ are non--abelian,
so we have:
\begin{lemma}\label{lemma:twist-H}
  The twisted homology groups of $N$ and $S \times I$ are given by:
  \[
  H_*(S \times I; \sllrhot) = \CC(t) \otimes_\CC H_*(S \times
  I;\sllrho) \simeq \CC(t) \otimes H_*(S; \sllrho),
  \]
  \[
  H_*(N; \sllrhot) = \CC(t) \otimes_\CC H_*(N; \sllrho).
  \]
  Moreover, $H_0(S \times I; \sllrhot) = H_2(S \times I; \sllrhot) =
  0$.
\end{lemma}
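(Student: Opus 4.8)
The plan is to use the defining property of the dual surface to trivialise $\varphi$ on the two pieces $N$ and $S\times I$, after which everything reduces to flatness of $\CC(t)$ over $\CC$ together with elementary $\sll$-representation theory. First I would observe that, $S=S_\varphi$ being a representative of the Poincar\'e dual of $\varphi\in H^1(M;\ZZ)$, every loop contained in $N$ or in $S\times I$ may be isotoped off $S$, hence has zero algebraic intersection number with $S$; equivalently, the inclusions $N\hookrightarrow M$ and $S\times I\hookrightarrow M$ lift to the infinite cyclic cover $\infcover{M}$ determined by $\varphi$ (this is the Milnor picture underlying the cut-and-paste). Therefore $\varphi$ restricts to the trivial homomorphism on $\pi_1(N)$ and on $\pi_1(S\times I)$. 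For $W=N$ or $W=S\times I$ this means that the action $\otAd{\varphi}{\rho}$ on $\sllrhot=\CC(t)\otimes_\CC\sll$, restricted to $\pi_1(W)$, is nothing but the $\CC(t)$-linear extension of $Ad\circ\rho^{-1}$, so that
\[
C_*(W;\sllrhot)=\CC(t)\otimes_\CC C_*(W;\sllrho).
\]
Since $\CC(t)$ is flat over $\CC$, homology commutes with $\CC(t)\otimes_\CC-$, giving $H_*(W;\sllrhot)=\CC(t)\otimes_\CC H_*(W;\sllrho)$; and the projection $S\times I\to S$, being a homotopy equivalence, furnishes $H_*(S\times I;\sllrho)\simeq H_*(S;\sllrho)$.

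It then remains to see that $H_0(S\times I;\sllrhot)=H_2(S\times I;\sllrhot)=0$, which by the above amounts to $H_0(S;\sllrho)=H_2(S;\sllrho)=0$, and since $S=\bigcup_\ell S_\ell$ it suffices to treat one component $S_\ell$. For $H_0$: the Killing form provides an $Ad\circ\rho$-equivariant isomorphism $\sll\cong\sll^\ast$, so $\dim_\CC H_0(S_\ell;\sllrho)=\dim_\CC H^0(S_\ell;\sllrho)=\dim_\CC\sll^{\rho(\pi_1(S_\ell))}$; a nonzero invariant vector would be a nonzero trace-free matrix centralising all of $\rho(\pi_1(S_\ell))$, and since the centraliser in $\GL_2(\CC)$ of such a matrix is abelian, this would force $\rho(\pi_1(S_\ell))$ to be abelian, contrary to the standing hypothesis; hence $H_0(S_\ell;\sllrho)=0$. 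For $H_2$: if $S_\ell$ has non-empty boundary it is homotopy equivalent to a graph and $H_2(S_\ell;\sllrho)=0$ for trivial reasons; if $\partial M=\emptyset$ and $S_\ell$ is closed, then $S_\ell$ is a closed oriented surface (a sphere or a torus being excluded, its fundamental group being abelian), and Poincar\'e duality gives $H_2(S_\ell;\sllrho)\cong H^0(S_\ell;\sllrho)=\sll^{\rho(\pi_1(S_\ell))}=0$.

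I do not expect a serious obstacle here. The one point that genuinely uses the geometry is the triviality of $\varphi$ on $\pi_1(N)$ and $\pi_1(S\times I)$, which is exactly the property making $S_\varphi$ (chosen so that $M\setminus S_\varphi$ is connected) the right object to cut along; the vanishing of $H_0$ and $H_2$ is where the hypothesis that each $\rho|_{\pi_1(S_\ell)}$ is non-abelian enters, via the standard fact that a non-abelian subgroup of $\SL$ has trivial centraliser in $\sll$.
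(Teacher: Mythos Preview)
Your proof is correct and follows essentially the same approach as the paper's. The paper's argument is terser---it simply asserts that $\varphi_{|\pi_1(N)}$ and $\varphi_{|\pi_1(S_\ell)}$ are trivial and then treats $H_0$ and $H_2$ exactly as you do (non-abelian restriction for $H_0$; Poincar\'e duality in the closed case and the one-dimensional homotopy type in the bounded case for $H_2$)---whereas you spell out the intersection-number reason for the triviality of $\varphi$ on the pieces and the centraliser argument for the vanishing of invariants, but these are just the details behind the paper's assertions.
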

\begin{proof}[Proof of Lemma~\ref{lemma:twist-H}]
  One has $M = N \cup (S \times I)$, where $N$ is a three--dimensional
  connected manifold whose boundary consists in $S^- \cup S^+$.  One
  can observe that the actions of the restrictions
  $\varphi_{|\pi_1(N)}$ and $\varphi_{|\pi_1(S_\ell)}$ of the
  representation $\varphi$ are trivial. So the first two equalities of the
  lemma hold.
  
  As $\rho|_{\pi_1(S_\ell)}$ is
  non--abelian, then $H_0(S_\ell \times I; \sllrhot) = 0$. In the case of a closed surface,
 last equality $H_2(S_\ell \times I; \sllrhot) = 0$ follows from Poincar\'e duality. For (compact) surface with boundary, last equality $H_2(S_\ell \times I; \sllrhot) = 0$ follows from the fact that $S_\ell \times I$ has the same homotopy type as a one--dimensional complex. So that $H_0(S \times I; \sllrhot) = H_2(S \times I; \sllrhot) = 0$.
\end{proof}

As $H_*(M; \sllrhot) = 0$, see Proposition~\ref{prop:acyclic}, the
Mayer--Vietoris sequence with coefficients in $\sllt$, denoted
$\mathcal{V}$, reduces to a single isomorphism:
\begin{equation}\label{eqn:MV}
  \mathcal{V}: 
  H_1(S^-;\sllrhot) \oplus H_1(S^+;\sllrhot) \xrightarrow{\simeq}
  H_1(N;\sllrhot) \oplus H_1(S \times I;\sllrhot) 
\end{equation}
where $H_1(S^\pm;\sllrhot) \simeq H_1(S;\sllrhot) \simeq H_1(S \times
I;\sllrhot)$.  The isomorphism in sequence~(\ref{eqn:MV}) is represented by the following matrix:
\[
\left(
  \begin{array}{cc}
    i^-_* & -i^+_* \\ 
    -\I & t\I
  \end{array}
\right),
\]
here $i^{\pm}\colon S_\pm \to N$ is the inclusion and $\I$ is the
identity matrix.  The Multiplicativity Lemma for Reidemeister
torsion gives us the identity below, because the common boundary of
$N$ and $S \times I$ is the disjoint union of two copies of $S$:
\[
\pm \PolyTors{M}{\rho}(t) \cdot \Tor{S}{\cbasis{*}_S}{\hbasis{*}_S}^2
\cdot \Tor{\mathcal{V}}{\{\hbasis{*}_S, \hbasis{*}_N\}}{\emptyset} =
\Tor{N}{\cbasis{*}_N}{\hbasis{*}_N} \cdot \Tor{S\times
  I}{\cbasis{*}_S}{\hbasis{*}_S}.
\]
Thus, since the torsions of $S$ and $S \times I$ are the same,
\[
\pm\PolyTors{M}{\rho}(t) = \Tor{\mathcal{V}}{\{\hbasis{*}_S,
  \hbasis{*}_N\}}{\emptyset}^{-1} \frac{
  \Tor{N}{\cbasis{*}_N}{\hbasis{*}_N} }{
  \Tor{S}{\cbasis{*}_S}{\hbasis{*}_S} } = \det \left(
  \begin{array}{cc}
    i^-_* & -i^+_* \\
    -\I & t\I
  \end{array}
\right) \frac{ \Tor{N}{\cbasis{*}_N}{\hbasis{*}_N} }{
  \Tor{S}{\cbasis{*}_S}{\hbasis{*}_S} }.
\]
The fraction of torsions $\Tor{N}{\cbasis{*}_N}{\hbasis{*}_N} /
\Tor{S}{\cbasis{*}_S}{\hbasis{*}_S}$ is independent of $t$ (because as
we have already observed in the proof of Lemma~\ref{lemma:twist-H},
the actions of the restrictions $\varphi_{|\pi_1(N)}$ and
$\varphi_{|\pi_1(S)}$ are trivial).  This proves that, up to sign,
$$
\pm \PolyTors{M}{\rho}(t) = \det(t i^-_* - i^+_*) \frac{
  \Tor{N}{\cbasis{*}_N}{\hbasis{*}_N} }{
  \Tor{S}{\cbasis{*}_S}{\hbasis{*}_S} }
$$ is a polynomial in $t$.
The one variable case in Theorem~\ref{theorem:poly} is proved.

\begin{remark}
  A result similar to Lemma~\ref{lemma:twist-H} can be found
  in~\cite[Proposition 3.6]{KL}.
\end{remark}

\subsubsection{Proof from one variable to two variables}

Suppose that $\varphi\colon \pi_1(M) \to \ZZ \oplus \ZZ$. A priori, up to
multiplications by $t_1^{r}t_2^{s}$, 
the torsion $\PolyTors{M}{\otAd{\varphi}{\rho}}(t_1, t_2)$ is a
rational function  $P(t_1, t_2) / Q(t_1, t_2)$, where $P(t_1, t_2)$ and
$Q(t_1, t_2) \ne 0$ are coprime in $\CC[t_1, t_2]$. We will prove in fact, reducing
the situation to one variable, that the polynomial $Q(t_1,t_2)$ is
constant.

To this end, suppose that $Q(t_1, t_2)$ is a non--constant
polynomial. Without loss of generality we assume that $Q(t_1, t_2)$ is
a non--constant in $t_2$.  Applying Euclidean algorithm to $P(t_1,
t_2)$ and $Q(t_1, t_2)$ in $\CC(t_1)[t_2]$, we obtain the following
equality in the polynomial ring $\CC(t_1)[t_2]$ over the rational
function field $\CC(t_1)$:
\[
P(t_1, t_2)\tilde u(t_1, t_2) + Q(t_1, t_2) \tilde v(t_1, t_2) = 1.
\]
Here the coefficients of the polynomials in $t_2$ $\tilde u(t_1, t_2)$ and $\tilde v(t_1, t_2)$ are rational
functions in $\CC(t_1)$.  By taking product with some polynomial
$w(t_1) \in \CC[t_1]$, the following equality holds in $\CC[t_1, t_2]$:
\begin{equation}
  \label{eqn:EuclidAlgo_two_variables}
  P(t_1, t_2) u(t_1, t_2) + Q(t_1, t_2) v(t_1, t_2) = w(t_1).
\end{equation}
To each  pair of coprime integers $(a_1, a_2)$, consider the
homomorphism $h_{(a_1, a_2)}\colon \ZZ^2 \to \ZZ$ defined by $h_{(a_1,
  a_2)} (t_1) = t^{a_1}$ and $h_{(a_1, a_2)} (t_2) = t^{a_2}$ and let
$\varphi_{(a_1, a_2)} = h_{(a_1, a_2)} \circ \varphi$.  Using
Proposition~\ref{cor:naturality} and by the first part of the proof (for
one variable), we know that:
\[
\PolyTors{M}{\otAd{\varphi_{(a_1, a_2)}}{\rho}}(t) =
\PolyTors{M}{\otAd{\varphi}{\rho}}(t^{a_1}, t^{a_2}) = \frac{P(t^{a_1}, t^{a_2})}{Q(t^{a_1}, t^{a_2})} \in \CC[t].
\]
Thus there exists a polynomial $R(t) \in \CC[t]$ such that $P(t^{a_1}, t^{a_2})= R(t) Q(t^{a_1},
t^{a_2})$.  Hence by substituting $t^{a_1}$ and $t^{a_2}$ to $t_1$
and $t_2$ respectively, Equation~$(\ref{eqn:EuclidAlgo_two_variables})$ turns into
\[
Q(t^{a_1}, t^{a_2}) \{ R(t) u(t^{a_1}, t^{a_2}) + v(t^{a_1},
t^{a_2})\} = w(t^{a_1}).
\]

From our assumption $a_2$ can be chosen sufficiently large.  Since we
suppose that $Q(t_1, t_2)$ is not constant in $t_2$, by changing $a_2$
into a sufficient large integer, we obtain an arbitrary large degree
polynomial $Q(t^{a_1}, t^{a_2})$.  This contradicts the fact that the
degree of $Q(t^{a_1}, t^{a_2})$ must be less or equal to that of
$w(t^{a_1})$.  Therefore $Q(t_1, t_2)$ is constant in $t_2$.

Similarly, we can also
conclude that $Q(t_1, t_2)$ is constant in $t_1$.  As a consequence, the
polynomial $Q(t_1, t_2)$ is constant which proves that
$\PolyTors{M}{\otAd{\varphi}{\rho}}(t_1, t_2)$ is a polynomial in two
variables.

By an inductive argument (in descending order of $a_i$), the general
case works in the same way: $\PolyTors{M}{\otAd{\varphi}{\rho}}(t_1,
\ldots, t_n) \in \CC[t_1, \ldots, t_n]$. The multivariable case in
Theorem~\ref{theorem:poly} is also proved.

% \begin{remark}
%   The abelian representation $\rho$ of
%   Proposition~\ref{prop:torsion_abelian} does not satisfy equality $H_0(S
%   \times I;\sllrhot)=H_2(S \times I;\sllrhot)=0$ in
%   Lemma~\ref{lemma:twist-H}.  In this case, the torsion
%   $\PolyTors{M}{\otAd{\varphi}{\rho_{\xi}}}(t)$ is not a polynomial in general.
%   On the other hand, the representation $\psi_{\xi}$ is also not irreducible but
%   Lemma~\ref{lemma:twist-H} also holds for this
%   representation since it is non--abelian. Therefore the torsion
%   $\PolyTors{M}{\otAd{\varphi}{\psi_{\xi}}}(t)$ is a polynomial.
% \end{remark}

\begin{remark}
  Actually, in the multivariable case of Theorem~\ref{theorem:poly},
  it is sufficient to assume that there exists some positive integer
  $N_0$ such that for any $N \mygeq N_0$ we have a dual surfaces
  $S_{(a_1, \ldots, a_n)}$ satisfying that $|a_i -a_j| > N$ for all
  distinct $i, j$.
\end{remark}

\section{Reciprocality of the polynomial torsion (with sign)}
\label{section:symmetries}

\subsection{Reciprocality properties}

The Alexander polynomial $\Delta_K(t)$ of a knot $K \subset S^3$ is
known to be reciprocal in the sense that $\Delta_K(t^{-1}) = \pm t^s
\Delta_K(t)$ from a long time. This property was first observed by
Seifert~\cite{Seifert:1934} and is a consequence of
Poincar\'e duality. Milnor~\cite{Milnor:1962} proved
it again using the interpretation of the Alexander polynomial as an
abelian Reidemeister torsion. Kitano~\cite{Kitano}, Kirk and
Livingston~\cite{KL} observe that Milnor's argument work fine in the
context of twisted Alexander polynomial. Hillman, Silver and
Williams~\cite{HSW:09} give a more general discussion on reciprocality
of twisted Alexander invariants for representations into
$\mathrm{SL}_n(\CC)$. All the known reciprocality formulas are
sign--less. In our situation Milnor's argument also work, and using
the fact that the torsion of $\partial M = \bigcup_{\ell = 1}^b
T_\ell^2$ is trivial, we have the following result, in which the sign
is analyzed in details (\cf~\cite[Theorem 2]{Milnor:1962} and
\cite[Theorem 5.1]{KL}).
We also refer to \cite{FriedlKimKitayama_Poincare}
for duality formulas for the polynomial torsion in more general situations.

\begin{theorem}\label{theorem:symmetry_torsion}
  Let $\bar{\cdot} \colon \ZZ \to \ZZ$ be the involution defined by
  $\bar{t} = t^{-1}$.  If we consider a homomorphism 
  $\varphi \colon \pi_1(M) \to \ZZ$ and its composition $\bar{\varphi}$ 
  with the involution $\bar{\cdot}$, 
  then the polynomial torsion satisfies the following identity 
  up to a factor $t^k\, (k \in \ZZ)$:
  \[
  \PolyTors{M}{\otAd{\varphi}{\rho}}(t^{-1}) = 
  \PolyTors{M}{\otAd{\bar{\varphi}}{\rho}}(t) = 
  \epsilon (-1)^{b(b+1)/2} \PolyTors{M}{\otAd{\varphi}{\rho}}(t).
  \]
  Here $\epsilon$ is the sign of the Reidemeister torsion of the long
  exact sequence in homology associated to the pair $(M, \bord M)$
  with the basis given by the bases of $H_*(\bord M;\IR)$, $H_*(M;\IR)$
  and the Poincar\'e dual bases of $H_*(M;\IR)$ in $H_*(M, \bord
  M;\IR)$ as in Equation~$(\ref{eqn:bases_the_pair})$.
\end{theorem}

\begin{remark}[A duality property for link exteriors]
  As a special case, if $M$ is a link exterior $E_L =S^3 \setminus N(L)$,
  then we can explicitly compute the sign--term in
  Theorem~\ref{theorem:symmetry_torsion}. 
  Proposition~\ref{prop:sign_pair_link_exterior} in Appendix
  gives us the sign $\epsilon = (-1)^{b(b-1)/2}$, where $b$ is the number of boundary components of $M$. Using naturality of
  the polynomial torsion (see Section~\ref{section:naturality}) then the
  following duality for the polynomial torsion
  $\PolyTors{E_L}{\otAd{\varphi}{\rho}}$ holds:
  \begin{align}
    \PolyTors{E_L}{\otAd{\varphi}{\rho}}(t^{-1}) 
      &=(-1)^{b(b-1)/2} \cdot (-1)^{b(b+1)/2} \PolyTors{E_L}{\otAd{\varphi}{\rho}}(t) \nonumber\\
      &=(-1)^b \PolyTors{E_L}{\otAd{\varphi}{\rho}}(t) \label{eqn:symmLink}.
  \end{align}
  In particular, if $L$ is a knot $K$ in $S^3$, then the
  following duality holds:
  \begin{equation}\label{eqn:symmKnot}
    \PolyTors{E_K}{\otAd{\varphi}{\rho}}(t^{-1}) =  
     - \PolyTors{E_K}{\otAd{\varphi}{\rho}}(t).
  \end{equation}
  Equation~(\ref{eqn:symmKnot}), and more generally
  Equation~(\ref{eqn:symmLink}), can be considered as a sign--refined
  version of Milnor's duality Theorem~\cite{Milnor:1962} for
  Reidemeister torsion.
\end{remark}

Our attention is restricted to the composition of an
$\SL$-rerpesentation and the adjoint action.  We refer
to~\cite{FriedlKim:06, FriedlVidussi:survey,
  HSW:09} for reciprocality formulas for other types
of representations.

\subsection{Proof of Theorem~\ref{theorem:symmetry_torsion}}

The proof is essentially based on the Multiplicativity Lemma for
torsions (with sign).  We apply the Multiplicativity Lemma in
Section~\ref{subsection:multiplicativitylemma} for short exact
sequences with the coefficient $\IR$ and $\sllt$ to observe
the relation between the signed torsion of the original chain complex
and that of the dual chain complex.  
With the fact that
$\alpha(C', C'')$ for the coefficients in $\IR$ are same as $\alpha(C',
C'')$ for the coefficients in $\sllt$ in mind, 
the Multiplicativity Lemma for the pair $(M, \bord M)$ yields the following equation for 
the sign--refined torsions:
\begin{equation}\label{eqn:1st_step_duality}
  \PolyTors{M}{\otAd{\varphi}{\rho}}(t)
  =\epsilon(-1)^{\nu}\cdot
  \PolyTors{\bord M}{\otAd{\varphi}{\rho}}(t) \cdot
  \PolyTors{(M, \bord M)}{\otAd{\varphi}{\rho}}(t).
\end{equation}
Here $\nu \in \ZZ/2\ZZ$ is the sign given by the sum
$\nu = 
\sum_{i=0}^3 (\beta_i+1)(\beta'_i+\beta^{''}_i)+\beta'_{i-1}\beta^{''}_i 
\in \ZZ/2\ZZ$ where
$$\beta_i = 
\sum_{r=0}^i \dim_{\IR}H_r(M;\IR), \;\beta'_i = \sum_{r=0}^i
\dim_{\IR}H_r(\bord M;\IR) \text{ and } \beta^{''}_i = \sum_{r=0}^i
\dim_{\IR}H_r(M, \bord M;\IR).
$$

We compute each terms appearing in Equation~(\ref{eqn:1st_step_duality}).

\subsubsection{Computation of the sign $\nu$.}

By our assumption on $M$ and $\bord M$, we observe that $\nu = 1$
in $\ZZ/2\ZZ$.

\subsubsection{Computation of $\PolyTors{\bord M}{\otAd{\varphi}{\rho}}(t)$.}

By using the Multiplicativity Lemma, we have 
\[
\PolyTors{\cup_{\ell=1}^b T^2_\ell}{\otAd{\varphi}{\rho}}(t) 
= (-1)^{b-1}
\PolyTors{T^2_b}{\otAd{\varphi}{\rho}}(t)
\cdot
\PolyTors{\cup_{\ell=1}^{b-1} T^2_\ell}{\otAd{\varphi}{\rho}}(t).
\]
Hence we have
$$
%   \label{eqn:product_torsion_tori}
  \PolyTors{\bord M}{\otAd{\varphi}{\rho}}(t) = 
  (-1)^{b(b-1)/2}
  \prod_{\ell=1}^b
  \PolyTors{T^2_\ell}{\otAd{\varphi}{\rho}}(t).
$$
Furthermore a direct computation provides the following lemma:
\begin{lemma}\label{lemma:Torus}
  The torsion $\PolyTors{T^2_\ell}{\otAd{\varphi}{\rho}}(t)$ of the
  $\ell$-th component of $\bord M$ with the homology orientation given by
  the ordered basis $\{\bbrack{T^2_\ell}, \bbrack{\lambda_\ell},
  \bbrack{\mu_\ell},\bbrack{p_\ell}\}$ is equal to $+1$.
\end{lemma}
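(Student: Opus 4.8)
The plan is to compute the torsion of $C_*(T^2_\ell;\sllrhot)$ directly from the standard one--vertex cell decomposition of the torus, and to pin down its sign by comparing with the real torsion that defines $\tau_0$.

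First I would verify that $C_*(T^2_\ell;\sllrhot)$ is acyclic, so that the torsion with empty homology basis is defined; this step uses only assumption $(A_\varphi)$. As $\pi_1(T^2_\ell)\cong\ZZ\langle\mu_\ell\rangle\oplus\ZZ\langle\lambda_\ell\rangle$ is abelian, after replacing $\rho$ by a conjugate (which changes neither the torsion nor its sign) we may assume that $\rho(\pi_1(T^2_\ell))$ is upper triangular, so that $\{E,H,F\}$ is adapted to a flag $0\subset L_1\subset L_2\subset\sll$ of $\pi_1(T^2_\ell)$--submodules with one--dimensional successive quotients. On each such quotient line the element $\mu_\ell$ acts by multiplication by $\varphi(\mu_\ell)$ times an eigenvalue of $Ad_{\rho(\mu_\ell)^{-1}}$, that is, by $t^{a_\ell}$ times a nonzero constant with $a_\ell>0$, and this is never $1$ in $\CC(t)$. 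Hence each graded piece is the twisted chain complex of a non--trivial rank one local system on the torus, which is acyclic; the two short exact sequences of chain complexes attached to the flag then show that $C_*(T^2_\ell;\sllrhot)$ itself is acyclic.

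Next I would apply the Multiplicativity Lemma to those two short exact sequences. The connecting homology complexes are trivial because all graded pieces are acyclic, and the sign terms $\alpha$ and $\varepsilon$ are straightforward to evaluate, so $\Tor{C_*(T^2_\ell;\sllrhot)}{\cbasis{*}}{\emptyset}$ equals, up to an explicit universal sign, the product of the three torsions of the form $\Tor{C_*(T^2_\ell;\CC(t)_{\alpha})}{\cbasis{*}}{\emptyset}$. A direct computation from the cell structure $\langle\mu_\ell,\lambda_\ell\mid\mu_\ell\lambda_\ell\mu_\ell^{-1}\lambda_\ell^{-1}\rangle$ (one $0$--cell, the two $1$--cells $\mu_\ell$ and $\lambda_\ell$, one $2$--cell, with twisted boundary maps obtained from the Fox derivatives $\partial_{\mu_\ell}=1-\lambda_\ell$ and $\partial_{\lambda_\ell}=\mu_\ell-1$) shows that the torsion of a torus with a non--trivial rank one local system is $\pm1$, independently of the local system. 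Consequently $\Tor{C_*(T^2_\ell;\sllrhot)}{\cbasis{*}}{\emptyset}=\pm1$ and $\PolyTors{T^2_\ell}{\otAd{\varphi}{\rho}}(t)=\tau_0\cdot(\pm1)$.

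I expect the main obstacle to be the determination of the sign, i.e.\ showing that the product above is exactly $+1$. For this I would run the same cell--level computation with trivial real coefficients to identify $\tau_0=\mathrm{sgn}\,\Tor{C_*(T^2_\ell;\IR)}{\cbasis{*}_\IR}{\hbasis{*}_\IR}$, where $\hbasis{*}_\IR=\{\bbrack{T^2_\ell},\bbrack{\lambda_\ell},\bbrack{\mu_\ell},\bbrack{p_\ell}\}$ is the ordered homology basis realizing the homology orientation fixed in the statement. Tracking the combinatorial factor $(-1)^{|C_*|}$ and all transition determinants through both computations, one checks that the sign of the $\CC(t)$--torsion and $\tau_0$ coincide, so that they cancel in the product $\tau_0\cdot\Tor{C_*(T^2_\ell;\sllrhot)}{\cbasis{*}}{\emptyset}$; the precise ordering of the homology basis in the statement is exactly what is needed for this cancellation. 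This recovers, in the sign--refined setting, the classical fact that the Reidemeister torsion of a torus is $+1$, and it is this triviality of $\PolyTors{\bord M}{\otAd{\varphi}{\rho}}$ that is used in the proof of Theorem~\ref{theorem:symmetry_torsion}.
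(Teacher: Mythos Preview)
Your proposal is correct, and since the paper's entire proof is the single phrase ``By a direct computation we have'', you are providing the first actual argument rather than reproducing one.

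A slightly more direct route---probably what the authors had in mind---avoids the flag decomposition altogether. With the one-vertex cell structure the twisted complex is $V\to V\oplus V\to V$ with $V=\sllrhot$, and since $a_\ell>0$ the operator $A-I$ (with $A=t^{a_\ell}Ad_{\rho(\mu_\ell)^{-1}}$) is invertible over $\CC(t)$. Taking $b^2=c^2$ and $b^1$ the $\mu_\ell$-summand, the two nontrivial transition determinants are both $\pm\det(A-I)$ and cancel, leaving a sign which one matches against $\tau_0$ computed from the same cell structure with real coefficients and the prescribed homology basis. Your reduction to three rank-one local systems achieves the same cancellation line by line; this makes acyclicity and the vanishing of the torsion completely transparent, at the cost of two applications of the Multiplicativity Lemma and their sign terms. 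Either way the delicate point is the one you correctly isolate: checking that the specific ordering $\{\bbrack{T^2_\ell},\bbrack{\lambda_\ell},\bbrack{\mu_\ell},\bbrack{p_\ell}\}$ produces $\tau_0$ with the right sign to give $+1$ rather than $-1$.
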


As a consequence, the twisted Alexander invariant of $\partial M$ is given by:
\begin{equation}\label{Eq:torsionbound}
\PolyTors{\bord M}{\otAd{\varphi}{\rho}}(t) = (-1)^{b(b-1)/2}.
\end{equation}

\subsubsection{Computation of $\PolyTors{(M, \bord M)}{\otAd{\varphi}{\rho}}(t)$.}
Here we use the duality properties of torsion proved by M. Farber and
V. Turaev in ~\cite{FarberTuraev:PR_metric}. We write the torsion
$\PolyTors{(M, \bord M)}{\otAd{\varphi}{\rho}}(t)$ in the right hand
side of Equation~$(\ref{eqn:1st_step_duality})$ as the torsion in the left
hand side with a sign term. Let $(M', \bord M')$ denotes the dual cell
decomposition of $(M, \bord M)$. Using the invariance of the Reidemeister torsion under
subdivisions of CW--pairs we obtain :
\[
\PolyTors{(M, \bord M)}{\otAd{\varphi}{\rho}}(t) = \PolyTors{(M',
  \bord M')}{\otAd{\varphi}{\rho}}(t).
\]
Let ${\rho}^{\star}\colon \pi_1(M) \to \SL$ be the following
representation
\[
{\rho}^{\star} = \left(
  \begin{smallmatrix}
    0 & 1\\
    -1& 0
  \end{smallmatrix}
\right)\rho \left(
  \begin{smallmatrix}
    0 & 1\\
    -1& 0
  \end{smallmatrix}
\right)^{-1}.
\]
Using the invariance of the Reidemeister torsion under conjugation of representations
we thus have
\[
  \PolyTors{(M, \bord M)}{\otAd{\varphi}{\rho}}(t)
  = \PolyTors{(M', \bord M')}{\otAd{\varphi}{\rho}}(t) 
  = \PolyTors{(M', \bord
    M')}{\otAd{\varphi}{{\rho}^{\star}}}(t).
\]

Observe that the pair $(M', \bord M')$ and the representation
$\otAd{\varphi}{{\rho}^{\star}}$ give the dual chain complex
of $M$ twisted by the representation $\otAd{\overline{\varphi}}{\rho}$, \ie, the
$\sllt_{{\rho}^{\star}}$-twisted chain complex $C_*(M', \bord
M';\sllt_{{\rho}^{\star}})$ can be identified with the dual chain complex of
$C_*(M;\msllrho{\overline{t}})$.  By using this identification and the
duality of torsion in~\cite{FarberTuraev:PR_metric}, the torsion
$\PolyTors{(M, \bord M)}{\otAd{\varphi}{\rho}}(t)$ is expressed as
\begin{align}\label{eqn:dualtorsion}
  \PolyTors{(M, \bord M)}{\otAd{\varphi}{\rho}}(t) &=
  \PolyTors{(M', \bord M')}{\otAd{\varphi}{{\rho}^{\star}}}(t) \nonumber \\
  &=
  (-1)^{s(H_*(M;\IR))+s(C_*(M;\IR))}\cdot(-1)^{s(C_*(M;\sllrhot))}\cdot
  \PolyTors{M}{\otAd{\overline{\varphi}}{\rho}}(t).
\end{align}
Here $s(C_*) = \sum_{q=1}^m \alpha_{q-1}(C_*)\alpha_q(C_*)$ for a chain
complex $C_*=C_m\oplus \cdots \oplus C_0$, with
$\alpha_q(C_*)=\sum_{j=0}^q\dim C_j$.  Further observe the sign term
difference between Farber--Turaev's formula and
Equation~(\ref{eqn:dualtorsion}): here the sign term
$\sum_{q=0}^{(m-1)/2}\alpha_{2q}(C_*)$ used
in~\cite{FarberTuraev:PR_metric} is omitted from $s(C_*)$ because we use
the complex $C_*(M', \bord M';\sllrhot)$ instead of the dual complex
of $C_*(M;\sllrhot)$.

The first sign $(-1)^{s(H_*(M;\IR))+s(C_*(M;\IR))}$
in~Equation~(\ref{eqn:dualtorsion}) comes from the duality of the sign
terms $\tau_0$ in $\PolyTors{(M, \bord M)}{\otAd{\varphi}{\rho}}(t)$
and the second one $(-1)^{s(C_*(M;\sllrhot))}$
in~Equation~(\ref{eqn:dualtorsion}) comes from the duality of torsions for
$C_*(M; \sllrhot)$.  Since $s(C_*(M;\IR))$ and $s(C_*(M;\sllrhot))$
are equal in $\ZZ/2\ZZ$, it turns out that
\begin{equation}\label{eqn:duality_torsion}
  \PolyTors{(M, \bord M)}{\otAd{\varphi}{\rho}}(t)
  =
  (-1)^{s(H_*(M;\IR))}
  \PolyTors{M}{\otAd{\overline{\varphi}}{\rho}}(t).
\end{equation}

\subsubsection{Conclusion}
Substituting $\nu =1$, $s(H_*(M;\IR)) = b+1$ and
Equation~(\ref{eqn:duality_torsion}) into
Equation~$(\ref{eqn:1st_step_duality})$ we obtain that
\begin{equation}\label{eqn:2nd_step_duality}
  \PolyTors{M}{\otAd{\varphi}{\rho}}(t)
  =\epsilon (-1)^{b}
  \PolyTors{\bord M}{\otAd{\varphi}{\rho}}(t)
  \PolyTors{M}{\otAd{\overline{\varphi}}{\rho}}(t).
\end{equation}
Finally, using
% Equations~(\ref{eqn:product_torsion_tori}) and~(\ref{Eq:torsionbound}),
Equation~(\ref{Eq:torsionbound}),
we can see that Equation~(\ref{eqn:2nd_step_duality}) turns into
$$
\PolyTors{M}{\otAd{\varphi}{\rho}}(t)
=\epsilon (-1)^{b+b(b-1)/2}
\PolyTors{M}{\otAd{\overline{\varphi}}{\rho}}(t)
=\epsilon (-1)^{b(b+1)/2}
\PolyTors{M}{\otAd{\overline{\varphi}}{\rho}}(t)
$$
which achieves the proof of Theorem~\ref{theorem:symmetry_torsion}.
\qed

\appendix

\section{A natural homology orientation for  link exterior}
\label{section:link_orientation}

In this section, we exhibit a natural, and in a sense compatible,
homology orientation in the case where $M$ is the exterior $E_L = S^3
\setminus N(L)$ of a link $L$ in $S^3$. Here $N(L)$ denotes a tubular
neighborhood of $L$. Observe that the boundary $\bord M$ of $M$
consists in the disjoint union of $b$ tori $T^2_1, \ldots, T^2_b$. For more details on homology orientations the reader is invited to refer to Turaev's monograph~\cite{Turaev:2002}.
	
The definition of the homology orientation needs some orientation
conventions. We suppose that $S^3$ and $L$ are oriented. 
Thus $E_L = S^3 \setminus N(L)$ and each torus $T^2_\ell$ inherits the orientation
induced by the one of $S^3$. Moreover each torus $T^2_\ell$ is given
together with its peripheral-system $(\lambda_\ell, \mu_\ell)$, where
$\lambda_\ell$, the longitude, and $\mu_\ell$, the meridian, generates
$H_1(T^2_\ell; \ZZ)\simeq \ZZ \oplus \ZZ$. These two curves are oriented using the following
rules: $\mu_\ell$ is oriented using the convention 
$\lk (\mu_\ell, L) = +1$, and $\lambda_\ell$ is oriented using the convention
$\mathrm{int}(\mu_\ell, \lambda_\ell) = +1$.

In what follows, we construct natural homology orientations for the
link exterior $E_L$, for its boundary $\bord E_L = \bigcup_{\ell = 1}^b T^2_\ell $ and 
for the pair $(E_L, \bord E_L)$. 
	
\subsection{Homology groups and homology orientations}
\label{A:1}

We begin our investigations by describing in details the homology
groups of the link exterior $E_L$, of its boundary 
$\bord E_L = \bigcup_{\ell = 1}^b T^2_\ell $ and of the pair $(E_L, \bord E_L)$.

The homology groups of the torus $T^2_\ell$ are given as follows:
$$
H_2(T^2_\ell;\IR) = \IR \, \bbrack{T^2_\ell},\,
H_1(T^2_\ell;\IR) = \IR \, \bbrack{\lambda_\ell} \oplus \IR \, \bbrack{\mu_\ell},\,
H_0(T^2_\ell;\IR) = \IR \, \bbrack{p_\ell}.
$$
Here $\bbrack{T^2_\ell}$ is the fundamental class induced by the orientation of
$T^2_\ell$, $\bbrack{\mu_\ell}$ and $\bbrack{\lambda_\ell}$ are given by the oriented
meridian and the oriented longitude (see above) and $\bbrack{p_\ell}$ is the
homology class of the base point.

An application of Mayer-Vietoris sequence associated to the following decomposition of the three--sphere:
$S^3 = E_L \cup (\bigcup_{\ell = 1}^b S^1 \times D^2_\ell)$ gives us the following
generators for the homology groups of $E_L$:
$$
H_2(E_L;\IR) = \bigoplus_{\ell=1}^{b-1} \IR \, \bbrack{T^2_\ell},\,
H_1(E_L;\IR) = \bigoplus_{\ell=1}^b \IR \, \bbrack{\mu_\ell},\,
H_0(E_L;\IR) = \IR \, \bbrack{p_b}.
$$

To describe the homology groups of the pair $(E_L, \bord E_L)$ we use
Poincar\'e duality and get the following generators:
\begin{align*}
H_3(E_L, \bord E_L;\IR) &= \IR \, \bbrack{E_L, \bord E_L},\\
H_2(E_L, \bord E_L;\IR) &= \IR \, \bbrack{S_1} \oplus \cdots \oplus \IR \, \bbrack{S_b},\\
H_1(E_L, \bord E_L;\IR) &= \IR \, \bbrack{\gamma_1} \oplus \cdots \oplus \IR \, \bbrack{\gamma_{b-1}}.
\end{align*}
Here $\bbrack{E_L, \bord E_L} = \bbrack{p_b}^*$ is the fundamental class induced by
orientations. If $S_\ell$ denotes the restriction of a Seifert surface of
the component $K_\ell$ in $L$ to $E_L$, then its class $\bbrack{S_\ell} = \bbrack{\mu_\ell}^*$. 
And finally, if $\gamma_\ell$ denotes a path connecting the
point $p_\ell$ to $p_b$, then its class $\bbrack{\gamma_\ell} = \bbrack{T_\ell^2}^*$.

The \emph{homology orientations} we fix on $\bord E_L$, $E_L$ and 
$(E_L, \bord E_L)$ are respectively induced by the following ordered bases:
\begin{align}\label{eqn:bases_the_pair}
  H_*(\bord E_L;\IR)
  &= \left\langle { 
       \bbrack{T^2_1}, \ldots, \bbrack{T^2_b}, 
       \bbrack{\lambda_1}, \bbrack{\mu_1}, 
       \ldots, 
       \bbrack{\lambda_b}, \bbrack{\mu_b}, 
       \bbrack{p_1}, \ldots, \bbrack{p_b} 
     } \right\rangle_\IR, \nonumber \\
  H_*(E_L;\IR)
  &= \left\langle { 
       \bbrack{T^2_1}, \ldots, \bbrack{T^2_{b-1}}, 
       \bbrack{\mu_1}, \ldots, \bbrack{\mu_b}, 
       \bbrack{p_b}
     } \right\rangle_\IR, \\
  H_*(E_L, \bord E_L;\IR) 
  &= \left\langle { 
       \bbrack{E_L, \bord E_L}, 
       \bbrack{S_1}, \ldots, \bbrack{S_b}, 
       \bbrack{\gamma_1}, \ldots, \bbrack{\gamma_{b-1}}
     }\right\rangle_\IR. \nonumber
\end{align}

\subsection{A sign term}

This combination of bases for the pair $(E_L, \bord E_L)$ are natural in
the sense that they are given by Poincar\'e duality. In the case of
link exteriors, using such homology orientations we will fix numbers
of sign indeterminacy in our formulas. However, to be exhaustive it
remains to us to explicitly compute the Reidemeister torsion of the
long exact sequence in homology associated to the pair $(E_L, \bord E_L)$. 
This torsion is a new sign term, given in the following
proposition, which will give us the sign--term in the symmetry
formula for the polynomial torsion (see
Section~\ref{section:symmetries}, in particular
Theorem~\ref{theorem:symmetry_torsion} and Equation~(\ref{eqn:symmLink})).

\begin{proposition}\label{prop:sign_pair_link_exterior}
  The torsion of the long exact sequence in homology associated to the
  pair $(E_L, \bord E_L)$, in which homology groups are endowed with the
  distinguished bases given in Equation~$(\ref{eqn:bases_the_pair})$, is
  equal to $(-1)^{b(b-1)/2}$.
\end{proposition}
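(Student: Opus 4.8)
The plan is to compute directly the Reidemeister torsion of the acyclic based complex $\mathcal{H}_*$ given by the long homology exact sequence of the pair $(E_L,\bord E_L)$, with each term carrying the distinguished basis extracted from Equation~$(\ref{eqn:bases_the_pair})$. The key observation is that, with real coefficients, this exact sequence is in fact a direct sum of three short exact sequences occupying disjoint degree ranges, so its torsion is simply the product of three easily computed determinants.

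First I would record the maps. Reading off the orientation conventions of Appendix~\ref{section:link_orientation}, the connecting homomorphisms are $\partial\bbrack{E_L,\bord E_L}=\sum_{\ell=1}^b\bbrack{T^2_\ell}$, $\partial\bbrack{S_\ell}=\bbrack{\lambda_\ell}$ (plus a linear combination of the $\bbrack{\mu_k}$'s, which will turn out to be irrelevant) and $\partial\bbrack{\gamma_\ell}=\bbrack{p_b}-\bbrack{p_\ell}$, while the inclusion maps act by $\bbrack{T^2_\ell}\mapsto\bbrack{T^2_\ell}$ for $\ell<b$ and $\bbrack{T^2_b}\mapsto-\sum_{\ell<b}\bbrack{T^2_\ell}$ (forced by exactness), $\bbrack{\mu_\ell}\mapsto\bbrack{\mu_\ell}$, $\bbrack{p_\ell}\mapsto\bbrack{p_b}$. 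Since the three connecting maps just listed are injective, exactness forces the maps $H_2(E_L)\to H_2(E_L,\bord E_L)$ and $H_1(E_L)\to H_1(E_L,\bord E_L)$ induced by inclusion of pairs to vanish, and makes $i_*\colon H_2(\bord E_L)\to H_2(E_L)$ and $i_*\colon H_1(\bord E_L)\to H_1(E_L)$ surjective. Hence $\mathcal{H}_*$ splits as the direct sum of the short exact sequences $A$, $B$, $C$ given respectively by
\[
0 \to H_3(E_L,\bord E_L) \xrightarrow{\partial} H_2(\bord E_L) \xrightarrow{i_*} H_2(E_L) \to 0,
\]
\[
0 \to H_2(E_L,\bord E_L) \xrightarrow{\partial} H_1(\bord E_L) \xrightarrow{i_*} H_1(E_L) \to 0,
\]
\[
0 \to H_1(E_L,\bord E_L) \xrightarrow{\partial} H_0(\bord E_L) \xrightarrow{i_*} H_0(E_L) \to 0.
\]

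Next I would compute the torsion of each of $A$, $B$, $C$. Because $\mathcal{H}_*$ is acyclic, the correction sign $(-1)^{|\mathcal{H}_*|}$ and the signs $\alpha,\varepsilon$ of the Multiplicativity Lemma all vanish (being built from the zero Betti numbers of $\mathcal{H}_*$), so the torsion of $\mathcal{H}_*$ equals the product of the three. For a based acyclic sequence $0\to V'\xrightarrow{f}V\xrightarrow{g}V''\to0$ the torsion equals, up to an inversion that depends only on the degree it occupies and is immaterial here since each torsion is $\pm1$, the determinant of the square matrix whose columns are $f$ applied to the basis of $V'$ followed by $s$ applied to the basis of $V''$, for any linear section $s$ of $g$. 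For $A$, with the section $\bbrack{T^2_\ell}\mapsto\bbrack{T^2_\ell}$, this matrix has first column $(1,\ldots,1)$ and remaining columns the first $b-1$ standard basis vectors of $\IR^b$, of determinant $(-1)^{b+1}$. For $C$, with the section $\bbrack{p_b}\mapsto\bbrack{p_b}$, the matrix is $\left(\begin{smallmatrix}-I_{b-1}&0\\ *&1\end{smallmatrix}\right)$, of determinant $(-1)^{b-1}$. For $B$, with the section $\bbrack{\mu_\ell}\mapsto\bbrack{\mu_\ell}$, column operations using the $\bbrack{\mu_\ell}$-columns clear the meridian entries of the $\partial\bbrack{S_\ell}$-columns, leaving the permutation matrix that reorders the column list $(\bbrack{\lambda_1},\ldots,\bbrack{\lambda_b},\bbrack{\mu_1},\ldots,\bbrack{\mu_b})$ into the row order $(\bbrack{\lambda_1},\bbrack{\mu_1},\ldots,\bbrack{\lambda_b},\bbrack{\mu_b})$ prescribed by Equation~$(\ref{eqn:bases_the_pair})$; this ``perfect shuffle'' has $b(b-1)/2$ inversions, so its sign is $(-1)^{b(b-1)/2}$. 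Multiplying, the torsion of $\mathcal{H}_*$ is $(-1)^{b+1}\cdot(-1)^{b(b-1)/2}\cdot(-1)^{b-1}=(-1)^{b(b-1)/2}$.

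The main obstacle is the sign bookkeeping: one must pin down the signs of the three connecting homomorphisms from the orientation rules of Appendix~\ref{section:link_orientation} (in particular that the $\bbrack{\lambda_\ell}$-coefficient of $\partial\bbrack{S_\ell}$ is $+1$ and that $\partial\bbrack{E_L,\bord E_L}$ is $+\sum_\ell\bbrack{T^2_\ell}$ and not its negative), and one must keep track of the degree-dependent inversion in the torsion of each three-term subcomplex — although, as noted, those inversions do not change the $\pm1$ values, so only the product of three explicit signs survives. A convenient sanity check is $b=1$ (a knot exterior), where each of the three determinants equals $+1$ and the torsion is $+1$.
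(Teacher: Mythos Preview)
Your proposal is correct and follows essentially the same approach as the paper: split the long exact sequence into the three short exact sequences $A,B,C$ (the paper's $\mathcal{H}^{(2)},\mathcal{H}^{(1)},\mathcal{H}^{(0)}$), compute each determinant, and combine. Your values $(-1)^{b+1}$, $(-1)^{b(b-1)/2}$, $(-1)^{b-1}$ agree with the paper's, and your ``perfect shuffle'' count for $B$ is a cleaner way of obtaining the middle sign than the paper's two base--change determinants $D_1,D_2$.

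One small imprecision: you write that the sign $\alpha$ from the Multiplicativity Lemma vanishes ``being built from the zero Betti numbers of $\mathcal{H}_*$''. This is not the reason; $\alpha(C'_*,C''_*)=\sum_i\alpha_{i-1}(C'_*)\alpha_i(C''_*)$ is built from chain--group dimensions, not Betti numbers. It does vanish here, but for a different reason: when $C'_*$ and $C''_*$ have disjoint degree supports with $C'_*$ below $C''_*$, every nonzero term in $\alpha$ carries the factor $\alpha_{\max}(C'_*)=\sum_j\dim C'_j$, and this total dimension is even because $C'_*$ is acyclic (so $\chi(C'_*)=0$ forces $\sum_j\dim C'_j=2\sum_{j\text{ even}}\dim C'_j$). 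The paper sidesteps this by directly asserting that the torsion of $\mathcal{H}_*$ is the alternating product $\prod_{i=0}^{2}\Tor{\mathcal{H}^{(i)}_*}{\cdot}{\emptyset}^{(-1)^i}$; since each factor is $\pm1$ this matches your straight product. Either way the conclusion stands.
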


\begin{proof}
  Let $\mathcal{H}_*$ denotes the long exact sequence in homology
  associated to the pair $(E_L, \bord E_L)$.  Counting dimensions, it is
  easy to observe that $\mathcal{H}_*$ is decomposed into the 
  following three short exact sequences:
  \begin{equation}\label{eqn:short_exact_pari}
    \mathcal{H}^{(i)}_* : 
        0 \to 
        H_{i+1}(E_L, \bord E_L;\IR) \xrightarrow{\delta^{(i)}} 
        H_i(\bord E_L;\IR) \xrightarrow{-j^{(i)}_*}
        H_i(E_L;\IR) \to
        0 
  \end{equation}
  for $i=0, 1, 2$. Here $\delta^{(i)}$ denotes the connecting
  homomorphism and $j^{(i)}_*$ is induced by the usual inclusion
  $\bord E_L \hookrightarrow E_L$.  Thus, the torsion of $\mathcal{H}_*$
  is equal to the alternative product of the three torsions of the
  above short exact sequences~(\ref{eqn:short_exact_pari}).

  The torsions are computed with respect to the bases given in
  Equation~$(\ref{eqn:bases_the_pair})$ and we have:

  \begin{claim}
    The Reidemeister torsions of the short exact sequences $\mathcal{H}^{(i)}_*$ are given by:
    \begin{align*}
    \Tor{\mathcal{H}^{(2)}_*}{\{\hbasisrel{E_L}\}}{\emptyset}
      &= (-1)^{b-1},\\
      \Tor{\mathcal{H}^{(1)}_*}{\{\hbasisrel{E_L}\}}{\emptyset}
      &= (-1)^{b(b-1)/2},\\
      \Tor{\mathcal{H}^{(0)}_*}{\{\hbasisrel{E_L}\}}{\emptyset} 
      &= (-1)^{b-1}.
    \end{align*}
  \end{claim}
  \begin{proof}[Proof of the claim]
    Each torsion of the short exact sequences $\mathcal{H}^{(i)}_*$
    can be calculated as follows.
    \begin{itemize}
    \item \emph{Computation of 
    $\Tor{\mathcal{H}^{(2)}_*}{\{\hbasisrel{E_L}\}}{\emptyset}$}.

  The connecting homomorphism $\delta^{(2)}$ maps $\bbrack{E_L, \bord
    E_L}$ to $\bbrack{T^2_1} + \cdots + \bbrack{T^2_b}$.  Moreover one
  has 
  $$j_*^{(2)}(\bbrack{T^2_{\ell}}) = \bbrack{T^2_{\ell}} \text{ for } \ell
  = 1, \ldots, b-1.$$
    Thus the set $\{\bbrack{T^2_1}, \ldots,
  \bbrack{T^2_{b-1}}\}$ of vectors in $H_2(\bord E_L; \IR)$ can be
  chosen as lifts of the distinguished basis of $H_2(E_L; \IR)$.  As a
  result, the torsion of this short exact sequence
  $\mathcal{H}_*^{(2)}$ is given by the following base change
  determinant:
    \begin{align*}
      \lefteqn{ 
        \Tor{\mathcal{H}^{(2)}_*}{\{\hbasisrel{E_L}\}}{\emptyset}
      } &\\
      &= \left[ 
        \{ \delta(
           \bbrack{E_L, \bord E_L}), 
           \bbrack{T^2_1}, \ldots, \bbrack{T^2_{b-1}}
        \} / 
        \{\bbrack{T^2_1}, \bbrack{T^2_2}, \ldots, \bbrack{T^2_b} \}
        \right] \\
      &= \left|
        \begin{array}{cccc}
          1 & 1 &  & \\
          \vdots & & \ddots & \\
          1 & & & 1 \\
          1 &   &  & 
        \end{array}
        \right| =(-1)^{b-1}.
    \end{align*}

  \item \emph{Computation of
      $\Tor{\mathcal{H}^{(1)}_*}{\{\hbasisrel{E_L}\}}{\emptyset} $}.

    The connecting homomorphism $\delta^{(1)}$ maps $\bbrack{S_\ell}$
    to $\bbrack{\lambda_\ell} + \sum_{k \ne \ell} \ell k(K_\ell,
    K_k)\bbrack{\mu_k}$ where $\lk$ denotes the linking number.
    Moreover, $j_*^{(1)}(\bbrack{\mu_\ell}) = \bbrack{\mu_\ell}$, for
    $\ell=1, \ldots, b$.  Thus, the set $\{\bbrack{\mu_1}, \ldots,
    \bbrack{\mu_b}\}$ of vectors in $H_1(\bord E_L; \IR)$ can be
    chosen as lifts of the distinguished basis of $H_1(E_L; \IR)$.  It
    follows that the torsion of the short exact sequence
    $\mathcal{H}_*^{(1)}$ is given by the ratio of the following two determinants of 
    bases change matrices:
      \begin{align}
        \lefteqn{
          \Tor{\mathcal{H}^{(1)}_*}{\{\hbasisrel{E_L}\}}{\emptyset}
        } & \nonumber \\
        &= 
          \left[ 
          \{ \delta(\bbrack{S_1}), \ldots, \delta(\bbrack{S_b}), 
             \bbrack{\mu_1}, \ldots, \bbrack{\mu_b} 
          \} / \{ 
              \bbrack{\lambda_1}, \bbrack{\mu_1}, 
              \ldots, 
              \bbrack{\lambda_b}, \bbrack{\mu_b}
          \}
          \right]. \label{eqn:torsion_2nd_short_exact}
      \end{align}
      Observe that the determinant in the right-hand side of
      Equation~(\ref{eqn:torsion_2nd_short_exact}) can be written again as
      the product of the following two base change determinants:
      $$
      D_1 = 
        \left[ 
        \{ 
          \delta(\bbrack{S_1}), \ldots, \delta(\bbrack{S_b}), 
          \bbrack{\mu_1}, \ldots, \bbrack{\mu_b}
        \} / \{
          \bbrack{\lambda_b}, \ldots, \bbrack{\lambda_1},
          \bbrack{\mu_1}, \ldots, \bbrack{\mu_b}
        \} 
        \right],
      $$
      $$ 
      D_2 = 
        \left[ 
        \{ 
           \bbrack{\lambda_b}, \ldots, \bbrack{\lambda_1}, 
           \bbrack{\mu_1}, \ldots, \bbrack{\mu_b}
        \} / \{ 
           \bbrack{\lambda_1}, \bbrack{\mu_1}, 
           \ldots, 
           \bbrack{\lambda_b}, \bbrack{\mu_b}
        \} 
        \right].
      $$

      It is easy to observe that
      \begin{align*}
        D_1 &= 
        \left[ 
          \{
            \bbrack{\lambda_1}, \ldots, \bbrack{\lambda_b}, 
            \bbrack{\mu_1} \ldots, \bbrack{\mu_b}
          \} / \{
            \bbrack{\lambda_b}, \ldots, \bbrack{\lambda_1}, 
            \bbrack{\mu_1}, \ldots, \bbrack{\mu_b}
          \}
         \right] \\
        & = \left|
          \begin{array}{ccc}
            0      & \cdots  & 1 \\
            \vdots & \rotatebox{45}{$\dots$} & \vdots \\
            1      & \cdots  & 0
          \end{array}
        \right| = (-1)^{b(b-1)/2},
      \end{align*}
      and that $D_2 = 1$, because it is the signature of a product of
      even permutations.

      As a conclusion, we have that 
      $$\Tor{\mathcal{H}^{(1)}_*}{\{\hbasisrel{E_L}\}}{\emptyset} =
      (-1)^{b(b-1)/2}.$$

    \item \emph{Computation of
        $\Tor{\mathcal{H}^{(0)}_*}{\{\hbasisrel{E_L}\}}{\emptyset} $}.

      The connecting homomorphism $\delta^{(0)}$ maps $\bbrack{\gamma_\ell}$ to
      $\bbrack{p_b} - \bbrack{p_\ell}$, for all $\ell = 1, \ldots, b-1$. 
      Also observe that $j_*^{(0)}(\bbrack{p_b}) = \bbrack{p_b}$. 
      Thus, the torsion of the short exact sequence
      $\mathcal{H}_*^{(0)}$ is given by the following base change
      determinant:
      \begin{align*}
        \Tor{\mathcal{H}^{(0)}_*}{\{\hbasisrel{E_L}\}}{\emptyset} 
        &= 
          \left[
            \{ 
              \delta(\bbrack{\gamma_1}), \ldots, \delta(\bbrack{\gamma_{b-1}}), 
               \bbrack{p_1}
            \} / \{
              \bbrack{p_1}, \ldots, \bbrack{p_b} 
            \}
            \right]\\
        &= \left[
          \begin{array}{cccc}
            -1  &  & & \\
            \vdots & \ddots &  &  \\
            & & -1 & \\
            1 & \cdots & 1& 1 
          \end{array}
        \right] =(-1)^{b-1}.
      \end{align*}
    \end{itemize}
  \end{proof}
  Using this result, we conclude that the torsion $\Tor{\mathcal{H}_*}{\{\hbasisrel{E_L}\}}{\emptyset}$ of $\mathcal{H}_*$
  is given by
  $\prod_{i=0}^2{\Tor{\mathcal{H}^{(i)}_*}{\{\hbasisrel{E_L}\}}{\emptyset}}^{(-1)^{i}} 
    = (-1)^{b(b-1)/2}.$
\end{proof}

%%%%%%%%%%%%%%%%%%%%%%%%%%%%%%%%%%%%%%%%%%%%%%%%%%%%%%%%%%%%%%%%%%%%
% reference 
%%%%%%%%%%%%%%%%%%%%%%%%%%%%%%%%%%%%%%%%%%%%%%%%%%%%%%%%%%%%%%%%%%%%
%\bibliographystyle{siam}
\bibliographystyle{amsalpha}
\bibliography{ref}
%%%%%%%%%%%%%%%%%%%%%%%%%%%%%%%%%%%%%%%%%%%%%%%%%%%%%%%%%%%%%%%%%%%%
% end of reference 
%%%%%%%%%%%%%%%%%%%%%%%%%%%%%%%%%%%%%%%%%%%%%%%%%%%%%%%%%%%%%%%%%%%%

%%%%%%%%%%%%%%%%%%%%%%%%%%%%%%%%%%%%%%%%%%%%%%%%%%%%%%%%%%%%%%%%%%%%
\end{document}